\documentclass[bj,preprint,12pt]{imsart}

\RequirePackage[OT1]{fontenc}
\RequirePackage{amsthm,amsmath}
\RequirePackage[numbers]{natbib}
\RequirePackage[colorlinks,citecolor=blue,urlcolor=blue]{hyperref}

% settings
%\pubyear{2017}
%\volume{0}
%\issue{0}
%\firstpage{1}
%\lastpage{8}
%\arxiv{arXiv:0000.0000}

\startlocaldefs
\numberwithin{equation}{section}
\theoremstyle{plain}

\endlocaldefs

\newcommand{\ignore}[1]{}

\usepackage{natbib}
\usepackage{multirow}
\usepackage{graphicx}
\usepackage{morefloats}
\usepackage{subfigure}
\usepackage{epsfig,graphics,rotating}
\usepackage{wrapfig}
\usepackage{savesym}
\usepackage{hyperref}
\usepackage{enumerate}
\usepackage{geometry}
\usepackage{amsmath,amssymb,amsthm}
\numberwithin{equation}{section}
%\numberwithin{theorem}{section}
%\numberwithin{corollary}{section}
%\usepackage{setspace}
%\newtheorem{thm}{Theorem}[section]
%\geometry{verbose,a4paper,tmargin=2cm,bmargin=2cm,lmargin=3.5cm,rmargin=3.5cm}

\newtheorem{theorem}{Theorem}[section]
\newtheorem{lemma}[theorem]{Lemma}
\newtheorem{proposition}[theorem]{Proposition}

\newtheorem{remark}[theorem]{Remark}

\newcommand*{\tran}{^{\mkern-1.5mu\mathsf{T}}}

\begin{document}
\title{Optimal Gaussian Approximation For Multiple Time Series}
\author{Sayar Karmakar$^{\dagger,*}$ and Wei Biao Wu$^{*}$}

\maketitle

\begin{center}
 \textit{$^{\dagger}$University of Florida and $^{*}$University of Chicago}
\end{center}

{\it Abstract.} We obtain an optimal bound for a Gaussian approximation of a large class of vector-valued random processes. Our results provide a substantial generalization of earlier results that assume independence and/or stationarity. Based on the decay rate of the functional dependence measure, we quantify the error bound of the Gaussian approximation using the sample size $n$ and the moment condition. Under the assumption of $p$th finite moment, with $p>2$, this can range from a worst case rate of $n^{1/2}$ to the best case rate of $n^{1/p}$.

\textbf{Key Words and Phrases:} Functional central limit theorem, Functional dependence measure, Gaussian approximation, Weak dependence.

\section{Introduction}
The functional central limit theorem (FCLT), or invariance principle plays an important role in statistics. Let $X_i$ for $i \ge 1,$ be independent and identically distributed (i.i.d.) random vectors in $\mathbb{R}^d$ with mean zero and covariance matrix $\Sigma$, and let $S_j = \sum_{i=1}^j X_i$. The FCLT asserts that
\begin{equation}\label{eq:fclt}
    \{ n^{-1/2} S_{\lfloor n u \rfloor}, \, 0 \le u \le 1\} \Rightarrow \{\Sigma^{1/2} I\!B(u) , \, 0 \le u \le 1\},
\end{equation}
where $\lfloor t \rfloor = \max\{i \in \mathbb Z: i \le t\}$ and $I\!B$ is the standard Brownian motion in $\mathbb{R}^d$; that is it has independent increments, and $I\!B(u+v) - I\!B(u) \sim N(0, v I_d)$ for $u, v \ge 0$. In this study, we generalize (\ref{eq:fclt}) by developing a convergence rate of (\ref{eq:fclt}) for multiple time series that can be dependent and nonidentically distributed.

The invariance principle was introduced by Erd{\"o}s and Kac (1946, \cite{MR0015705}). Doob (1949, \cite{MR0030732}), Donsker (1952, \cite{MR0047288}), and Prohorov (1956, \cite{MR0084896}) furthered their ideas, which led to the theory of weak convergence of probability measures. There is an extensive body of literature on Gaussian approximations when the dimension $d = 1$. In this case, optimal rates for independent random variables were obtained by \cite{MR0375412} and \cite{MR2302850}, among others. When $d = 1$ and $X_i$ is i.i.d. with mean zero and variance $\sigma^2$ and has a finite $p$th moment for $p > 2$, Koml{\'o}s, Major, and Tusn{\'a}dy (1975, 76, \cite{MR0375412, MR0402883}) established the following result: 
\begin{eqnarray}\label{eq:errorrate}
\max_{1\leq i \leq n} |S_i'- \sigma B(i)| = o_{\rm a.s.}(\tau_n),
\end{eqnarray}
where $B(\cdot)$ is the standard Brownian motion and $S_n'$ is constructed on a richer space; such that $(S_i)_{i \leq n}\stackrel{D}{=}(S_i')_{i \leq n}$, and the approximation rate $\tau_n = n^{1/p}$ is optimal. Results of the type shown in (\ref{eq:errorrate}) have many applications in statistics because we can use functionals involving  Gaussian processes to approximate statistics of $(X_i)_{i=1}^n$, and thus exploit the properties of Gaussian processes. Their result was generalized to independent random vectors by Einmahl (1987a, \cite{MR899446}; 1987b, \cite{MR905340}; 1989, \cite{MR996984}), Zaitsev (2001, \cite{MR1968723}; 2002a, \cite{MR1971831}; 2002b, \cite{MR1978667}), and G{\"o}tze and Zaitsev (2008, \cite{MR2760567}), who optimal and nearly optimal results. 

To generalize (\ref{eq:errorrate}) to multiple time series, we consider the possibly nonstationary, $d$-dimensional, mean zero, vector-valued process
\begin{eqnarray}\label{eq:representation}
X_i = (X_{i1},\ldots, X_{id}) \tran = H_i (\mathcal{F}_i)= H_i ( \epsilon_{i}, \epsilon_{i-1} ,\ldots ), \quad i \in \mathbb Z,
\end{eqnarray}
where $\tran$ denotes a matrix transpose, $\mathcal{F}_i = ( \epsilon_i, \epsilon_{i-1}, \ldots )$ and $\epsilon_i$ for $i \in \mathbb Z,$ are i.i.d. random variables. Here, $H_i(\cdot)$ is a measurable function such that $X_i$ is well defined. We allow $H_i$ to be possibly nonlinear in its argument $(\epsilon_i,\epsilon_{i-1},\ldots )$ in order to capture a much larger class of processes. If $H_i(\cdot) \equiv H(\cdot)$ does not depend on $i$, (\ref{eq:representation}) defines a stationary causal process. The latter framework is very general; see \cite{MR2778591, MR2172215, MR991969}, among others. When $d = 1$, \citet{wiener1958nonlinear} considered representing stationary processes by functionals of i.i.d. random variables.

\citet{MR2172368} presented numerous applications of the functional central limit theorem for multiple time series analysis. Wu and Zhao (2007, \cite{MR2323759}) and Zhou and Wu (2010, \cite{MR2758526}) applied Gaussian approximation results with suboptimal approximation rates to trend estimations and functional regression models. For the class of weakly dependent processes (\ref{eq:representation}), we show that there exists a probability space $(\Omega_c, A_c, P_c)$ on which we can define random vectors $X_i^c$, with the partial sum process $S_i^c= \sum_{t=1}^i X_t^c$ and a Gaussian process $G_i^c= \sum_{t=1}^i Y_t^c$. Here $Y_t^c$ is a mean zero independent Gaussian vector, such that $(S_i^c)_{1 \le i \le n} \stackrel{D}{=} (S_i)_{1 \le i \le n}$ and 
\begin{eqnarray}\label{eq:Main Gaussian approximation eq}
\displaystyle\max_{i \leq n}|S_i^c- G_i^c| = o_P(\tau_n) \quad \text{in } (\Omega_c,A_c, P_c),
\end{eqnarray} 
where the approximation bound $\tau_n$ is related to the dependence decaying rates. Our result is useful for asymptotic inferences involving multiple time series. As a primary contribution, we generalize and improve the existing results for Gaussian approximations in several ways. For some $p>2$, we assume uniform integrability of the $p$th moment and obtain an approximation bound $\tau_n$ in terms of $p$ and the decay rate of the functional dependence measure. In particular, if the dependence decays sufficiently quickly, for $\tau_n$, we are able to achieve the optimal $o_P(n^{1/p})$ bound. In the current literature, optimal results have been obtained for some special cases only. We start with a brief overview of these.

For stationary processes with $d = 1$, a suboptimal rate was derived by Wu (2007, \cite{MR2353389}), where the martingale approximation is applied. Berkes, Liu, and Wu (2014, \cite{MR3178474}) considered the causal stationary process given in  (\ref{eq:representation}) above obtaining the $n^{1/p}$ bound for $p>2$. It is considerably more challenging to deal with vector-valued processes. Eberlein (1986, \cite{MR815969}) obtained a Gaussian approximation result for dependent random vectors with an approximation error $O(n^{1/2-\kappa} ),$ for some small $\kappa > 0$. However, this bound can be too crude for many statistical applications. The martingale approximation approach in \cite{MR2353389} cannot be applied to vector-valued processes because Strassen's embedding fails for vector-valued martingales \cite{monrad1991problem} in general. For a stationary multiple time series with additional constraints, Liu and Lin (2009, \cite{MR2485027}) obtained an important result on strong invariance principles for stationary processes with bounds of the order $n^{1/p}$, with $2<p<4$.  Wu and Zhou (2011, \cite{MR2827528}) obtained suboptimal rates for multiple nonstationary time series. A critical limitation of the results in \cite{MR2827528, MR2485027} is the restriction $2<p<4$. Whether the bound $n^{1/p}$ can be achieved when $p \ge 4$ remains an open problem.

In this paper, we show that under proper decaying conditions on functional dependence measures for the process (\ref{eq:representation}), we can indeed obtain the optimal bound $n^{1/p}$ for $p \ge 4$. Our condition is stated in the form of (\ref{eq:form of thetaip}), which employs the two parameters $\chi$ and $A$ to formulate the temporal dependence of the process. In general, larger values of $\chi$ and $A$ mean the dependence decays more quickly. With proper conditions on $A$, we find optimal $\tau_n = \tau_n(\chi)$ for a general $\chi>0$. In Corollary 2.1 in Berkes, Liu, and Wu (2014, \cite{MR3178474}) the authors discussed univariate and stationary processes. However, their focus was on larger values of $\chi$ that allowed them to obtain $\tau_n=n^{1/p}$. In Theorem \ref{th:main theorem}, we obtain a rate for any $\chi>0$, and show that if $\chi$ increases from 0 to a certain number $\chi_0$, we obtain the optimal $\tau_n$, varying from the worst, $n^{1/2}$, to the optimal, $n^{1/p}$. This work is useful for processes in which dependence does not decay sufficiently quickly. For the borderline case $\chi=\chi_0$, we have a rate of $o_P(n^{1/p})$ for $2<p<4$, and for $p \geq 4$, we have a rate of $o_P(n^{1/p} \log n)$. However, if $\chi>\chi_0$, we obtain the optimal $o_P(n^{1/p})$ bound for all $p>2$.

Our sharp Gaussian approximation result is quite useful for simultaneous inferences of curves where the unknown function is not even Lipschitz continuous. Although many studies have examined curve estimations by assuming smooth or regular behavior of a function few have focused on functions that are not differentiable or not Lipschitz continuous. Our Gaussian approximation can play a key role in weakening the smoothness assumption and thus enlarging the scope of statistical inferences.  Moreover, the optimal $o_P(n^{1/p})$ bound for $2<p<4$ and the stationary processes obtained in \cite{MR2485027} have remained popular choices over the past few years for multivariate Gaussian approximations. Therefore, we can apply our sharper invariance principle to generalize that of (\cite{MR2485027}) one in multiple ways, thus yielding optimal rates when $p \ge 4$.

The rest of the article is organized as follows. In section \ref{sec:mr}, we introduce the functional dependence measure and present our main result. Applications to linear processes and to locally stationary nonlinear nonLipschitz processes are given in section \ref{sec:appl}. The proof of Theorem \ref{th:main theorem} 
is outlined in section \ref{sec:proof1}. A detailed version is provided in the online Supplementary Material section \ref{sec:proofdetails}. The goal of the sketched outline is to give the readers a basic idea of our long and involved derivation. Some useful results used throughout the proofs are presented in the online Supplementary Material section \ref{sec:sul}.

We now introduce some notation. For a random vector $Y$, write $Y \in \mathcal{L}_p$, for $p > 0$, if $\|Y \|_p := E(|Y |^p ) ^{1/p} < \infty$. If $Y \in \mathcal{L}_2$, $Var(Y)$ denotes the covariance matrix. For the $\mathcal{L}_2$ norm write $\|\cdot \| =\| \cdot \|_ 2$. Throughout the text, $c_p$ denotes a constant that depends only on $p$ and $c$ denotes a universal constants. These might take different values in different lines, unless otherwise specified. Then, $x^+=\max(x,0)$ and $x^{-}=-\min(x,0)$. For two positive sequences $a_n$ and $b_n$, if $a_n/b_n \to 0$ (resp. $a_n/b_n \to \infty$), write $a_n \ll b_n$ (resp. $a_n \gg b_n$). Write $a_n \lesssim b_n$ if $a_n \leq c b_n$, for some $c<\infty$. The $d$-variate normal distribution with mean $\mu$ and covariance matrix $\Sigma$ is denoted by $N(\mu, \Sigma)$. Denote by $I_d$ the $d \times d$ identity matrix. For a matrix $A= (a_{ij})$, we define its Frobenius norm as $|A|= (\sum a_{ij}^2)^{1/2}$. For a positive semi-definite matrix $A$ with spectral decomposition $A= Q D Q\tran$, where $Q$ is orthonormal and $D = (\lambda_1, \ldots, \lambda_d)$ with $\lambda_1 \ge \ldots \ge \lambda_d$, write the Grammian square root as $A^{1/2}= Q D^{1/2} Q\tran$, where $\rho_*(A) = \lambda_d$ and $\rho^*(A) = \lambda_1$.

\section{Main Results}
\label{sec:mr}
We first introduce the uniform functional dependence measure on the underlying process using the idea of coupling. Let $\epsilon_i', \epsilon_j$, for $i, j \in \mathbb{Z},$ be i.i.d. random variables. Assume $X_i \in \mathcal{L}_p, p>0$. For $j \geq 0$, $0<r\leq p$, define the functional dependence measure 
\begin{eqnarray}\label{eq:fdm}
\delta_{j,r}= \sup_{i} \| X_i - X_{i,(i-j)} \|_r = \sup_{i} \|H_i(\mathcal{F}_i)- H_i(\mathcal{F}_{i, (i-j)}) \|_r ,
\end{eqnarray}
\noindent where $\mathcal{F}_{i,(k)}$ is the coupled version of $\mathcal{F}_i$, with $\epsilon_k$ in $\mathcal{F}_i$ replaced by an i.i.d.  copy $\epsilon_k'$,
\begin{eqnarray*}\label{eq:F_ik}
\mathcal{F}_{i,(k)}= (\epsilon_i, \epsilon_{i-1},  \ldots, \epsilon_k', \epsilon_{k-1}, \ldots )
 \mbox{ and } X_{i,(i-j)}= H_i(\mathcal{F}_{i,(i-j)}).
\end{eqnarray*}
In addition, $\mathcal{F}_{i,(k)}= \mathcal{F}_i$ if $k>i$. Note that, $\|H_i(\mathcal{F}_i)- H_i (\mathcal{F}_{i, (i-j)}) \| _r$ measures the dependence of $X_i$ on $\epsilon_{i-j}$. Because the physical mechanism function $H_i$ may differ for a nonstationary process, we choose to define the functional dependence measure in a uniform manner. The quantity $\delta_{j,r}$ measures the uniform $j$-lag dependence in terms of the $r$th moment. Assume throughout that
\begin{eqnarray}
\Theta_{0,p}=\displaystyle\sum_{i=0}^{\infty}\delta_{i,p} <\infty.
\end{eqnarray}
This condition implies short-range dependence in the sense that the cumulative dependence of $(X_j)_{j \geq k}$ on  $\epsilon_k$ is finite. For clarity of presentation, in this paper we assume there exists $\chi > 0, A > 0$ such that the tail cumulative dependence measure
\begin{eqnarray}
 \label{eq:form of thetaip}
\Theta_{i,p} = \sum_{j=i}^{\infty}\delta_{j,p} = O\left(i^{-\chi} (\log i)^{-A}\right).
\end{eqnarray}
Larger $\chi$ or $A$ implies weaker dependence. Our Gaussian approximation rate $\tau_n$ (cf., Theorems \ref{th:main theorem} and \ref{th:theorem 2}) depends on $\chi$ and $A$. Define functions $f_j(\cdot, \cdot)$ as follows
\begin{eqnarray}\label{eq:fs}
f_1 &=& f_1(p,\chi)=p^2 \chi^2+p^2\chi, \,\, f_2 = 2p \chi^2+3p \chi- 2\chi, \\ \nonumber 
f_3 &=& p^3(1+\chi)^2 + 6 f_1+ 4p\chi-2, \,\, f_4 = 2 p(2p\chi^2+3p\chi+p-2), \\ \nonumber 
\quad f_5 &=& p^2( p^2+4p-12)\chi^2+2p(p^3+p^2-4p-4)\chi+ (p^2-p-2)^2.
\end{eqnarray}

\noindent Assume that the process in (\ref{eq:representation}) satisfies the uniform integrability and regularity conditions on the covariance structure:
\begin{enumerate}[(2.A)]
\item The series $(|X_i|^p)_{i \ge 1}$ is uniformly integrable: $\sup_{i \ge 1} E (|X_i|^p {\bf 1}_{|X_i| \ge u}) \to 0 \mbox{ as } u \to \infty;$
\item (Lower bound on eigenvalues of covariance matrices of increment processes) There exists $\lambda_* > 0$ and $l_* \in \mathbb N$, such that for all $t \ge 1, l \ge l_*$,  $$\rho_*(Var (S_{t+l}- S_t)) \geq \lambda_* l.$$
\end{enumerate}
The uniform integrability assumption is necessary owing to the nonstationarity of the process. The latter is frequently imposed in study of multiple time series.

\begin{theorem}
\label{th:main theorem}
Assume $E (X_i) = 0$, (2.A)---(2.B), and (\ref{eq:form of thetaip}) holds with
\begin{eqnarray}\label{eq:chi0}
0<\chi<\chi_0=\frac{p^2-4+(p-2)\sqrt{p^2+20p+4}}{8p}, \\ \label{eq:A condition}
A>\frac{(2p+p^2)\chi+p^2+3p+2+f_5^{1/2}}{p(1+p+2\chi)}.
\end{eqnarray} 
Then, (\ref{eq:Main Gaussian approximation eq}) holds with the approximation bound $\tau_n = n^{1/r}$, where
\begin{eqnarray}\label{eq:taun}
\frac{1}{r}= \frac{f_1+p^2 \chi+p^2-2p+f_2- \chi \sqrt{(p-2) (f_3-3p)}}{f_4}. 
\end{eqnarray}
\end{theorem}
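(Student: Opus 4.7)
The plan is to use a three-tier approximation: (i) replace $X_i$ by its $m$-dependent conditional expectation $\tilde X_i := E(X_i\mid \epsilon_i,\ldots,\epsilon_{i-m})$; (ii) partition $\{1,\ldots,n\}$ into alternating big and small blocks of sizes $K$ and $L$ (with $L\ge m$, so the big-block sums of $\tilde X$ become independent across blocks); and (iii) apply a sharp Gaussian approximation for sums of independent random vectors to the big-block sums. The three block parameters $m,K,L$, each chosen as a power $n^{\alpha_j}$ of $n$, will be optimized jointly at the very end to produce the exponent $1/r$ in (\ref{eq:taun}).

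For step (i), the $m$-dependent replacement error can be handled by a maximal inequality tailored to the functional dependence measure, using the orthogonality of the telescoping projection differences $P_{i-j}X_i = E(X_i\mid\mathcal F_{i-j}) - E(X_i\mid\mathcal F_{i-j-1})$ whose $\mathcal L_p$-norms are controlled by $\delta_{j,p}$; the tail assumption (\ref{eq:form of thetaip}) then feeds in to yield $\max_{i\le n}|S_i-\tilde S_i| = O_P(\text{something involving } m^{-\chi}(\log m)^{-A})$. For the small blocks I would use a vector-valued Rosenthal/Burkholder type bound (again formulated through $\Theta_{0,p}$) to show their cumulative contribution is $O_P((nL/K)^{1/2})$ up to lower-order terms. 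Assumption (2.A) lets me apply truncation uniformly across the nonstationary coordinates, which is essential for the moment controls. For step (iii), big-block sums $T_j=\sum_{i\in B_j}\tilde X_i$ are independent mean-zero vectors with $\|T_j\|_p \lesssim K^{1/2}$ and with covariance of minimum eigenvalue $\gtrsim \lambda_* K$ by (2.B); I would invoke the G\"otze--Zaitsev (2008) Gaussian coupling for independent random vectors, yielding independent Gaussians $Y_j$ with matching covariances such that $\max_{j\le N}|\sum_{k\le j}(T_k-Y_k)| = O_P(N^{1/p}K^{1/2})$ up to polylogarithmic factors, where $N\asymp n/K$. A small perturbation argument then lets me recover the block-decoupled Gaussian $G^c$ from the original $\tilde X$-covariances.

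Combining the three errors gives a bound of the form
\begin{equation*}
n^{1/2} m^{1/2-\chi}(\log m)^{-A} \;+\; \sqrt{nL/K} \;+\; N^{1/p} K^{1/2}\,(\log n)^{?},
\end{equation*}
and the technical heart of the proof is the joint minimization of this expression over $(m,K,L)=(n^{\alpha_1},n^{\alpha_2},n^{\alpha_3})$ subject to $L\ge m$. The quadratic character of (\ref{eq:taun})--(\ref{eq:chi0}) (via $\sqrt{(p-2)(f_3-3p)}$ and $\sqrt{p^2+20p+4}$) strongly suggests that the optimal exponents are determined by a system of linear equalities among the three summands, producing a quadratic in the rate exponent whose discriminant is exactly $f_5$; the threshold $\chi_0$ is precisely where this quadratic's root crosses $1/p$. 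The logarithmic condition (\ref{eq:A condition}) is there to absorb $(\log n)$-factors coming from both the block inequality of step (i) and the G\"otze--Zaitsev coupling. The main obstacle I anticipate is twofold: first, carrying out the Gaussian coupling in step (iii) with a rate sharp enough in \emph{both} $N$ and $\|T_j\|_p$ (since the crude rate $N^{1/2}$ is not enough for any $\chi>0$); and second, the bookkeeping in the optimization, where one must precisely track the interaction of three exponents and the logarithmic factors so that the final rate is exactly (\ref{eq:taun}) rather than a looser upper bound.
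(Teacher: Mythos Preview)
Your architecture (an $m$-dependence step, a block decomposition, and a G\"otze--Zaitsev coupling for the block sums) is the right one, but there is a genuine gap: as written, your three error terms and their joint minimization cannot produce the quadratic formula~(\ref{eq:taun}). The reason is that you apply G\"otze--Zaitsev with the $p$th moment only (your bound $N^{1/p}K^{1/2}$ and your remark $\|T_j\|_p\lesssim K^{1/2}$ make this explicit). Equating your three summands in powers of $n$ then yields a \emph{linear} system in the exponent $1/r$, so no square root can appear. The paper's decisive extra ingredient is to truncate \emph{first} at level $t_n n^{1/p}$ and only \emph{then} take the $m$-dependent conditional expectation; the truncated functional dependence measure then satisfies $\tilde\delta_{j,\gamma}\le 2\,n^{1/p-1/\gamma}t_n^{1-p/\gamma}\delta_{j,p}^{p/\gamma}$ for any $\gamma>p$, which unlocks a Rosenthal bound $\|\tilde R_{c,2k_0m}\|_\gamma=O(m^{1/2})$ for a $\gamma$ strictly larger than $p$. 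The moment order $\gamma$ becomes an additional optimization variable, and it enters the balance equations nonlinearly (via terms like $(1-(\chi+1)p/\gamma)L$ and $L(\gamma/2-1)$); solving the resulting system for $(L,\gamma,r)$ is exactly what produces the radicals $\sqrt{(p-2)(f_3-3p)}$ and $\sqrt{f_5}$, with $\chi_0$ the value at which the root hits $1/p$. Your vague allusion to truncation ``for the moment controls'' does not capture this mechanism.

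Two further points. First, your $m$-dependence error $n^{1/2}m^{1/2-\chi}(\log m)^{-A}$ has a spurious $m^{1/2}$; the maximal inequality (Lemma~A1 of Liu--Lin) gives $\|\max_l|S_l^{\oplus}-\tilde S_l|\|_r\le c_r n^{1/2}\Theta_{1+m,r}=O(n^{1/2}m^{-\chi}(\log m)^{-A})$, consistent with your earlier parenthetical. Second, the paper does \emph{not} use alternating big/small blocks. Instead it takes consecutive blocks of length $2k_0m$ (with $k_0=\lfloor\Theta_{0,2}^2/\lambda_*\rfloor+2$), which are only one-dependent, and achieves independence by \emph{conditioning} on the $\epsilon$'s in each block's last $m$ positions; G\"otze--Zaitsev is then applied to the conditionally independent, recentred block sums. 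This avoids the small-block error $\sqrt{nL/K}$ entirely, at the cost of making the block covariances random in the conditioning variable. Verifying the eigenvalue condition~(\ref{eq:zaitsev condition 1}) for these random covariances (via a Nagaev-type deviation bound) and then undoing the conditioning via a second G\"otze--Zaitsev application, together with a positive-definitization of the regrouped covariances, constitute the bulk of the technical work and are what your sketch does not yet address.
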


\begin{theorem}\label{th:theorem 2}
Assume $E (X_i) = 0$, (2.A)---(2.B), and (\ref{eq:form of thetaip}) hold. Recall (\ref{eq:chi0}) for $\chi_0$: (i) if $\chi>\chi_0$ and $A>0$, we can achieve (\ref{eq:Main Gaussian approximation eq}) with $\tau_n = n^{1/p}$ for all $p>2$; for $\chi=\chi_0$, assume that $A$ satisfies (\ref{eq:A condition}); (ii) if $2<p<4$, we have $\tau_n = n^{1/p}$; (iii) if $p \geq 4$, we have $\tau_n = n^{1/p}\log n$.
\end{theorem}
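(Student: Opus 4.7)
The plan is to revisit the proof of Theorem \ref{th:main theorem} at and beyond the critical threshold $\chi_0$. The starting observation, a direct (if tedious) algebraic verification, is that the exponent $1/r$ defined in (\ref{eq:taun}) evaluates exactly to $1/p$ when $\chi = \chi_0$; indeed one checks that the quantity $\chi_0$ in (\ref{eq:chi0}) is designed as the root of the equation $r(\chi) = p$. Hence Theorem \ref{th:main theorem} is already delivering the correct rate at the boundary, provided its proof can be pushed to the endpoint; beyond the endpoint, no further improvement is possible because $n^{1/p}$ is already optimal even for i.i.d.\ sequences.

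For part (i), $\chi > \chi_0$ and $A > 0$: the bound $\Theta_{i,p} = O(i^{-\chi}(\log i)^{-A})$ implies the strictly stronger bound $\Theta_{i,p} = O(i^{-\chi_0}(\log i)^{-A'})$ for any $A' > 0$. I would re-run the argument of Theorem \ref{th:main theorem}, which combines an $m$-dependent coupling of $X_i$ with a big-block/small-block decomposition and a Zaitsev-/Einmahl-type Gaussian approximation for the resulting independent big-block sums. Because the extra dependence decay strictly loosens every constraint on the block-size parameters, one can pick them at the values for which both the truncation residual (of size essentially $\sqrt{n}\,\Theta_{m,p}$) and the independent-blocks coupling error are simultaneously $o_P(n^{1/p})$, which gives the optimal rate for all $p > 2$.

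For part (ii), $\chi = \chi_0$: I would push the proof of Theorem \ref{th:main theorem} to the boundary. The condition (\ref{eq:A condition}) supplies exactly the logarithmic slack that disappears when the strict inequality in (\ref{eq:chi0}) is replaced by equality. When $2 < p < 4$, the Gaussian coupling for independent vectors (Einmahl's inequality in the $p$th-moment regime with $p < 4$) delivers $n^{1/p}$ with no dimensional logarithm penalty, so no extra factor appears. When $p \ge 4$, the corresponding coupling bound carries an intrinsic logarithmic correction (as in Zaitsev and G\"otze--Zaitsev), which combined with the borderline choice of block parameters produces the additional factor of $\log n$ in the stated rate $n^{1/p}\log n$.

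The main obstacle I anticipate is the careful bookkeeping of logarithmic factors at the endpoint $\chi = \chi_0$. At the optimizing choice of block sizes there are two distinct logarithmic contributions: one coming from the factor $(\log i)^{-A}$ in (\ref{eq:form of thetaip}), the other from the coupling inequality applied to independent $\mathbb{R}^d$-valued summands. The technical heart of the argument is to show that (\ref{eq:A condition}) is precisely the condition that makes these contributions cancel when $2 < p < 4$, and that for $p \ge 4$ a single residual $\log n$ survives and accounts for the extra factor in the stated bound.
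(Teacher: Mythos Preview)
Your proposal is largely on the right track for parts (ii) and (iii), but there is a genuine gap in part (i).

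For (i), your reduction argument --- replacing $\chi$ by $\chi_0$ with an arbitrarily large logarithmic exponent $A'$ --- would at best recover the rate from the boundary case $\chi = \chi_0$. But as your own parts (ii)--(iii) concede, at $\chi = \chi_0$ with $p \ge 4$ the rate is only $n^{1/p}\log n$, not $n^{1/p}$; the extra logarithmic factor does not originate from the $(\log i)^{-A}$ term in (\ref{eq:form of thetaip}) (so enlarging $A'$ cannot remove it) but from the denominator in the block length $l$ of (\ref{eq:choice of l}), which is forced by condition (\ref{eq:zaitsev condition 1}) in Proposition \ref{re:zaitsev result}. The paper's actual argument for $\chi > \chi_0$ does \emph{not} reduce to $\chi_0$: it sets up a modified system of equations (\ref{eq:new set}) in which the $m$-dependence step is re-done with an artificial moment exponent $r' > p$ (via Lemma \ref{lem:truncated deltajy}, since moments above $p$ are not defined for the original process), and then verifies the key Nagaev-type bound (\ref{eq:vja-evja small 2}) using a higher power $\gamma' > \gamma$ satisfying $\gamma' < 2(1+p+p\chi)/3$. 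A further subtlety you miss is that the positive-definitization step (Proposition \ref{prop:Vj final}) requires $\gamma > 4\chi$, which can fail for large $\chi$; the paper handles this by capping $\chi$ at a root $\chi_1 > \chi_0$ of $\gamma' = 4\chi$.

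For (ii) and (iii) your diagnosis is essentially correct: the paper uses Einmahl's 1987 result for $2 < p < 4$ (which avoids condition (\ref{eq:zaitsev condition 2}) and drops the $\log n$ in $l$) and G\"otze--Zaitsev for $p \ge 4$. But for (iii) you miss a concrete obstruction: at $\chi = \chi_0$ the display (\ref{eq:nagaev difficult case}) in the original proof becomes $O((\log n)^p t_n^{k(p/\gamma - p/2)})$, which may diverge, so the paper is forced to redefine the $m$ sequence as $m = \lfloor n^L (\log n)^{2\gamma/(\gamma-2)} t_n^k \rfloor$ and re-derive a new set of constraints, arriving at $\delta = 1$ in $\tau_n = n^{1/p}(\log n)^\delta$. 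Your claim that (\ref{eq:A condition}) is ``precisely the condition that makes these contributions cancel'' is not the actual mechanism; the condition on $A$ is needed throughout the proof but is not what produces or removes the $\log n$ factor.
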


Theorems \ref{th:main theorem} and \ref{th:theorem 2} concern the two cases $\chi < \chi_0$ and $\chi \ge \chi_0$, respectively, and they are proved in sections \ref{sec:proof1} and \ref{sec:poc} respectively. The proof of Theorem \ref{th:theorem 2} requires a more refined treatment so that the optimal rate can be derived. For Theorem \ref{th:main theorem} and Theorem \ref{th:theorem 2}(i) and (iii), we apply G{\"o}tze and Zaitsev (2008, \cite{MR2760567}); see Proposition $\ref{re:zaitsev result}$. For Theorem \ref{th:theorem 2}(ii), Proposition 1 from Einmahl (1987, \cite{MR899446}) is applied. The expression of $r$ is complicated. Figure \ref{fig:tau} plots the power $\max(1/r, 1/p)$. As $\chi \to 0$, $r \to 2$ and $r = p$ if $\chi >\chi_0$.

\begin{figure} [h!]  
\centering
\includegraphics[width=8.2cm,keepaspectratio]{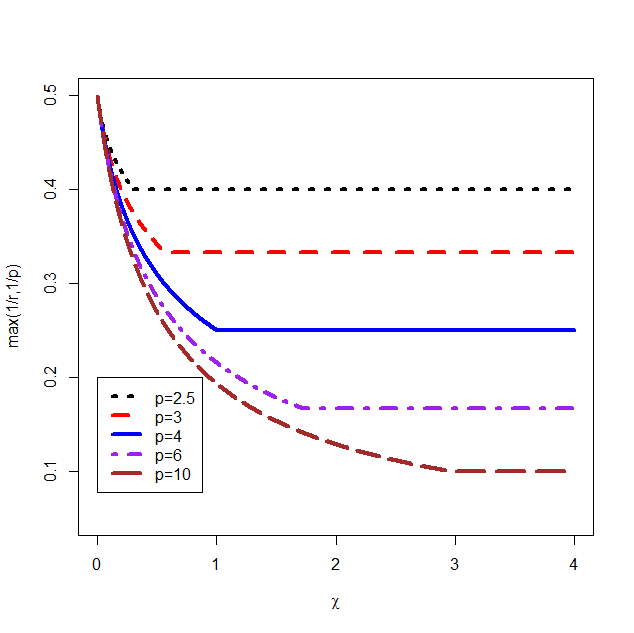} 
\caption{Optimal bound as a function of $\chi$}
\label{fig:tau} 
\end{figure}

\begin{remark} 
The lower bound of $A$ for the case $\chi=\chi_0$ can be further simplified to $$A > \frac{p^2+8p+4+(p-2)\sqrt{p^2+20p+4}}{6p}.$$  
\end{remark}

\section{Applications}
\label{sec:appl}
\subsection{Vector linear processes:}
\ignore{
Consider a vector ARMA process with lags $p$ and $q$
\begin{eqnarray}
X_i-A_1X_{i-1}-\ldots -A_pX_{i-p}=\epsilon_i+B_1\epsilon_{i-1}+\ldots+B_q\epsilon_{i-q}.
\end{eqnarray}
Because of the stationary nature of the process condition (2.A) is automatically satisfied. For condition (2.B) one needs invertibility of the process and the non-singularity of $A^{-1}B\Sigma B \tran A^{-\mkern-1.5mu\mathsf{T}}$ where $\Sigma$ is the dispersion matrix of the process $(\epsilon_i)_{i \ge 0}$ and 

\begin{eqnarray}\label{eq:A and B}
A
\end{eqnarray}
}

Assume that $X_i$ is a vector linear process
\begin{equation}\label{eq:veclin}
    X_i = \sum_{j=0}^\infty B_j \epsilon_{i-j},
\end{equation}
where $B_j$ is $d \times d$ coefficient matrix, and $\epsilon_i = (\epsilon_{i1}, \ldots, \epsilon_{i d})\tran$. Here $\epsilon_{i}$ is an i.i.d. random variable with mean zero and a finite $q$th moment, for some $q>2$. Assume
\begin{equation}\label{eq:condition on Bj}
    \sum_{j=t}^\infty |B_j| = O (t^{-\chi}(\log t)^{-A}),
\end{equation}
where $A$ satisfies (\ref{eq:A condition}), with $p$ therein replaced by $q$. The model in (\ref{eq:veclin}) covers a large class of popular multiple timeseries models including the vector AR, vector MA and vector ARMA models. under mild conditions on the coefficient matrices. Specifically, for a zero-mean vector ARMA process with lags $a$ and $b$ 
\begin{eqnarray}
X_i-\Psi_1X_{i-1}-\ldots -\Psi_aX_{i-a}=\epsilon_i+\Phi_1\epsilon_{i-1}+\ldots+\Phi_b\epsilon_{i-b},
\end{eqnarray}

\noindent the stability condition (see \cite{MR2172368} for a definition) ensures a pure vector MA representation (\ref{eq:veclin}). The stationarity of the $X_i$ process and the finite $q$th moment ensure condition (2.A), with $p$ replaced by $q$. Write $\Psi_*=I-\Psi_1-\ldots-\Psi_a, \Phi_*=I+\Phi_1+\ldots+\Phi_b.$ Assume $\Psi_*$, $\Phi_*$, and $\Sigma_e=E(e_1e_1\tran)$ are nonsingular. Elementary calculation shows that, as $l \rightarrow \infty$, $$Var(S_l/\sqrt{l}) \rightarrow \Psi_*^{-1}\Phi_* \Sigma_e \Phi_* \tran \Psi_*^{-\mkern-1.5mu\mathsf{T}},$$
which is also non-singular. Thus condition (2.B) holds. Note that $\|X_i-X_{i,(i-j)} \|_q=O(|B_j|)$. Therefore, condition (2.3) is satisfied for the $X_i$ process, from assumption (\ref{eq:condition on Bj}). Thus, under a suitable moment assumption, we can apply Theorems \ref{th:main theorem} and \ref{th:theorem 2} to generalize the central limit theory-type results to a stronger invariance principle.

Next, we discuss the covariance process for $X_i$ that admits a representation as (\ref{eq:veclin}). Assume $q > 4$. Let the $d(d+1)/2$-dimensional vector $W_i = (X_{i r} X_{i s})_{1 \le r \le s \le d}$. Then, $\bar W_n := \sum_{i=1}^n W_i / n$ gives sample covariances of $(X_i)_{i=1}^n$.  Write $p = q/2$. Fix two coordinates $1\leq r\leq s\leq d$. Then,
\begin{eqnarray*}
&&\|X_{ir}X_{is}-X_{i,(i-j)r}X_{i,(i-j)s}\|_p \nonumber \\
&& \quad \leq \|X_{ir}X_{is}- X_{ir}X_{i,(i-j)s}\|_p+ \|X_{ir}X_{i,(i-j)s}-X_{i,(i-j)r}X_{i,(i-j)s}\|_p \\
&& \quad \leq\|X_{ir}\|_q\|X_{is}-X_{i,(i-j)s}\|_q+\|X_{ir}-X_{i,(i-j)r}\|_q\|X_{i,(i-j)s}\|_q \\
&& \quad = O(|B_j|),
\end{eqnarray*}
because $\epsilon_i$ has a finite $q$th moment. Thus, condition (\ref{eq:condition on Bj}) translates to condition (\ref{eq:form of thetaip}) for the $W$ process with $p=q/2$. Condition (2.A) is trivially satisfied because the process $W_i$ is stationary and has a finite $p$th moment. Let $\Sigma_W = \sum_{k=-\infty}^{\infty} Cov(W_0, W_k)$ be the long-run covariance matrix of $(W_i)$. We assume the minimum eigenvalue of $\Sigma_W$ is positive. This ensures that condition (2.B) holds. By Theorems \ref{th:main theorem} and \ref{th:theorem 2}, we have 
\begin{equation}\label{eq:estimation of cov} 
    \max_{i\le n} |i \bar W_i - i E(W_1) - \Sigma_W^{1/2} I\!B(i)| = o_P(\tau_n),
\end{equation}
where $\tau_n$ takes the values $n^{1/r}$ (see (\ref{eq:taun})), and $n^{1/p}$, based on $\chi<\chi_0$ and $\chi >\chi_0$, respectively and $I\!B$ is a centered standard Brownian motion. Result (\ref{eq:estimation of cov}) is helpful for change point inferences for multiple time series based on covariances; see \cite{aue2009break, Trapani17}, among others.

\subsection{Nonlinear nonstationary time series:}  
Consider the process
\begin{eqnarray*}
X_{i} = F(X_{i-1}, \epsilon_i,\theta(i/n)), \,\, 1 \le i \le n,
\end{eqnarray*}
where $\epsilon_i$ is an i.i.d. random variable, $F$ is a measurable function, $\theta:[0,1] \to \mathbb{R}$ is a parametric function such that $\max_{0 \le u \le 1} \| F(x_0, \epsilon_i, \theta(u)) \|_p < \infty$, and
\begin{eqnarray}\label{eq:gmc2}
\sup_{0 \le u \le 1} \sup_{x \ne x'} \frac{ \| F(x, \epsilon_i,\theta(u))-F_i(x', \epsilon_i, \theta(u)) \|_p}{|x-x'|} < 1.
\end{eqnarray}
Then, the process $X_i$ satisfies the following geometric moment contraction: for some $0<\beta<1$, 
\begin{eqnarray}\label{eq:gmc}
\delta_{i,p} = O(\beta^{i}). 
\end{eqnarray}
Thus, (\ref{eq:form of thetaip}) holds for any $\chi > 0$, and Theorem \ref{th:theorem 2} is applicable with rate $\tau_n = n^{1/p}$. This facilitates an inference for the unknown parametric function $\theta$. Time-varying analogues of ARCH-, GARCH-, AR-, ARMA-type models are prominent examples in this large class of nonstationary models. We discuss the following example of a threshold AR(1) model (see Tong (1990, \cite{tong90})) with time-varying coefficients:
\begin{eqnarray}\label{eq:tar model}
Y_i=\theta_1(i/n)Y_{i-1}^++\theta_2(i/n)Y_{i-1}^-+e_i,
\end{eqnarray}
where $e_i$ is an i.i.d. mean-zero innovation. Assuming 
$\theta(\cdot)=(\theta_1(\cdot),\theta_2(\cdot)) \tran$ is continuous,  we can estimate $\theta(t)$, for $t \in [0,1]$, by 
\begin{eqnarray}\label{eq:estimate theta}
(\hat{\theta}_1(t),\hat{\theta}_2(t))^T=\arg \min_{\eta_1,\eta_2}\sum_{i=2}^{n}(Y_i-\eta_1Y_{i-1}^+-\eta_2Y_{i-1}^-)^2K\left(\frac{i/n-t}{b_n}\right),
\end{eqnarray}
where $K$ is a symmetric kernel with bounded variation and compact support, and $b_n$ is an appropriately chosen bandwidth. For such an estimation choice one has
\begin{eqnarray}\label{eq:imp eq}
\sqrt{nb_n}M(t)(\hat{\theta}(t)-\theta(t))&=&  \frac{1}{\sqrt{nb_n}}\sum_{i=2}^n \textbf{v}_i\textbf{v}_i\tran\left(\theta\left(\frac{i}{n}\right)-\theta(t)\right) K\left(\frac{i/n-t}{b_n}\right) \nonumber \\
&& \quad \quad \quad +\frac{1}{\sqrt{nb_n}}\sum_{i=2}^n \textbf{v}_ie_i K\left(\frac{i/n-t}{b_n}\right),
\end{eqnarray}
where $\textbf{v}_i=(Y_{i-1}^+, Y_{i-1}^- )\tran$ and $M(t)=(nb_n)^{-1}\sum_{i=2}^n \textbf{v}_i\textbf{v}_i\tran K ((i/n-t)/b_n)$. Assuming some mild conditions on the innovation process $e_i$ and the time-varying functions $\theta_1$ and $\theta_2$, we can construct a simultaneous confidence interval for $\theta$ from  (\ref{eq:imp eq}). Assume for some $p>2, \|e_1\|_p<\infty,$ $e_1$ has a density with support $(-\infty,\infty)$, and 
\begin{eqnarray}\label{eq:unifbound}
s=\sup_t(|\theta_1(t)|+|\theta_2(t)|)<1.
\end{eqnarray} We verify the conditions of Theorem \ref{th:theorem 2} using the bivariate process $X_i=\textbf{v}_ie_i$. To prove (2.A), it suffices to show uniform integrability for $(|Y_{i}|^p)_{i \geq 1}$ for the model (\ref{eq:tar model}). It easily follows because $e_i$ is an i.i.d. innovation process with a finite $p$th moment, and
$$|Y_i| \leq |e_i|+s|Y_{i-1}| \leq \sum_{j=0}^{\infty}s^j |e_{i-j}|.$$
Thus, (2.A) holds. As a result of the independence of $e_i$, and beacuse $x^+x^-=0$,
$$ Var(S_{t+l}-S_t)=\sum_{i=t+1}^{t+l} Var(\textbf{v}_ie_i)=\sum_{i=t+1}^{t+l} \text{diag}(E((Y_{i-1}^+)^2)E(e_i^2), E((Y_{i-1}^-)^2)E(e_i^2)).$$
With $D_i=\theta_1(i/n)Y_{i-1}^++\theta_2(i/n)Y_{i-1}^-$ and $c_0=2\sup_i\|Y_i\|_2$,
\begin{eqnarray}
E((Y_{i-1}^+)^2)=E(((e_{i-1}+D_{i-2})^+)^2) &\ge& E(((e_{i-1}+D_{i-2})^+)^2I(|D_{i-2}|\leq c_0)) \nonumber \\
&\ge& E(((e_{i-1}-c_0)^+)^2) P(|D_{i-2}| \leq c_0) \nonumber \\ 
&>& c_1(1-2\sup_i\|Y_i\|_2^2/c_0^2),
\end{eqnarray}
where $c_1$ is a constant that does not depend on $i$. We have a similar calculation for $E((Y_{i-1}^-)^2)$, and thus, (2.B) is satisfied. Under  assumption (\ref{eq:unifbound}), because $X_i$ satisfies the geometric moment contraction property (\ref{eq:gmc2}), (2.3) holds for any $\chi>0$.

For the second term in (\ref{eq:imp eq}), we apply the Gaussian approximation from Theorem \ref{th:theorem 2} with rate $\tau_n=n^{1/p}$. Using summation-by-parts, the negligibility criterion for the term with the approximation rate requires 
\begin{eqnarray}\label{eq:cond on bn}
n^{1/p}/\sqrt{nb_n} \to 0,
\end{eqnarray}
assuming bounded variation of $K$ (cf., Zhao and Wu (2007,\cite{zhaowu07})). Now, assume $\theta_1(\cdot)$ and $\theta_2(\cdot)$ are H\"older-$\alpha$ continuous for some $\alpha<1/2$. For the negligibility of the first term in (\ref{eq:imp eq}) portraying we need $\sqrt{nb_n}b_n^{\alpha}\to 0$. This, along with (\ref{eq:cond on bn}) and $\alpha<1/2$, requires $p> 4$. This portrays one scenario among many that demands a sharper Gaussian approximation than $n^{1/4}$. One such is obtained in Theorem \ref{th:theorem 2}. In the regime of curve estimation, our result provides a strong tool by relaxing the smoothness assumption on the coefficient curves/functions. This example shows how to overcome the unavailability of a Taylor series expansion using the minimal H\"older-continuity property and a sharper Gaussian approximation.

\ignore{
\subsection{Non-Lipschitz trend function in presence of non-linear error:}

Consider the following general class of model
\begin{eqnarray}\label{non-linear model}
X_i=g(i/n)+e_i,
\end{eqnarray}
where $g: [0,1] \rightarrow \mathbb{R}^d$ is H\"older-$\alpha$ continuous for some $\alpha<1/2$ and $e_i$ is a vector-valued non-linear error process. The Pristley-Chao type estimate for $g$ reads

$$\hat{g}(t)=\frac{1}{nb_n}\sum K\left(\frac{i/n-t}{b_n}\right)X_i,$$

\noindent for a symmetric kernel $K$. Since Taylor series expansion for $g$ is not available, one feasible strategy of constructing a simultaneous confidence band for $g$, is to write $\hat{g}(t)-g(t)$ as follows:

\begin{eqnarray}
\sqrt{nb_n}(\hat{g}(t)-g(t)) &=& \frac{1}{\sqrt{nb_n}} \sum K\left(\frac{i/n-t}{b_n}\right)(g(i/n)-g(t))  \nonumber \\
&+& \frac{g(t)}{\sqrt{nb_n}} \{\sum K\left(\frac{i/n-t}{b_n}\right) -nb_n \} \nonumber \\
&+& \frac{1}{\sqrt{nb_n}} \sum K\left(\frac{i/n-t}{b_n}\right) (e_i-z_i) \nonumber \\
&+& \frac{1}{\sqrt{nb_n}} \sum K\left(\frac{i/n-t}{b_n}\right) z_i,
\end{eqnarray}

\noindent where $z_i$ is the approximating Gaussian process from Theorem \ref{th:main theorem} or \ref{th:theorem 2}. The fourth term can be used to derive distributional result using Gaussian extreme value theory from Lindgren (1980, \cite{lindgren1980}).

For showing negligibility of the third term using summation-by-parts and the Gaussian approximation rate $\tau_n=n^{1/r}$ in (\ref{eq:Main Gaussian approximation eq}), one needs $n^{1/r}/\sqrt{nb_n} \rightarrow 0$. On the other side, using H\"older-$\alpha$ continuity of $g$ and assuming bounded variation for $K$, one requires $n^{1/2}b_n^{1/2+\alpha} \rightarrow 0$. Clearly if $\alpha<1/2$ then one need $n^{1/r}$ bound for $r>4$ which we can achieve using Theorem \ref{th:theorem 2} if the decay rate of dependence is fast enough. This shows the effectiveness of our new result as it allows relaxing Lipschitz continuity to H\"older-$\alpha$ continuity with $\alpha$ even lower than 1/2.

There is a large class of models that can fall under the specification in (\ref{non-linear model}). Consider the general time-varying MGARCH($p$,$q$) model without any specific covariate but a time-varying trend function such as $g$. The error process is specified as follows:

\begin{eqnarray}
e_i&=&H_i^{1/2}\nu_i, \nonumber \\
vech(H_i)&=&W+A_i(L)(e_ie_i^T)+B_i(L)vech(H_i),
\end{eqnarray}
where $H_i^{1/2}$ is the Cholesky factor of the time-varying conditional covariance matrix $H_i$, $\nu_i$ is a $d$-dimensional vector of zero-mean and unit-variance i.i.d. innovations, $A_i$ and $B_i$ are lag polynomials of degree $q$ and $p$ respectively. It is usual practice to impose conditions on the parameter matrices of MGARCH process to ensure existence and positive definiteness of the covariance matrices. Under those conditions and mild moment conditions one can express the error process $e_i$ as a function of $\{\nu_j:j \leq i\}$ and thus compute the functional dependence measure as described in (\ref{eq:fdm}).
}

\section{Key ideas of the proof of Theorem \ref{th:main theorem}}
\label{sec:proof1}
The proof of Theorem \ref{th:main theorem} is quite involved. Here, we provide a brief outline of the major components of the proof. In particular, we emphasize the difficulties that arise as a result of the nonstationarity and the vector-valued process, as well as the techniques we use to circumvent these problems. Because these techniques allow us to solve this problem in such a general manner, we believe it might be of interest to the reader to at least have an overview of the major steps. A detailed proof is provided in the online Supplementary Material.

The first part of our proof consists of a series of approximations to create almost independent blocks. The first of them, the truncation approximation, ensures the optimal $n^{1/p}$ bound. This step differs from the treatment of \cite{MR3178474} because of the choice of the truncation level; we included the term $t_n$, exploiting the uniform integrability assumption. This is necessary because of the nonstationarity. Second, we use the $m$-dependence approximation for a suitably chosen sequence $m_n$ in terms of the decay rate $\chi$. This generalizes the treatment in \cite{MR3178474} because it also allows for processes where dependence decays slowly. Lastly, the blocking approximation requires some sharp Rosenthal-type inequality that needs a $\gamma$th moment of the block-sums in the numerator with $\gamma>p$. It is essential to use a power higher than $p$ to obtain a better rate.  This step needs a $k$-dic decomposition, where $k$ is possibly greater than or equal to three, to allow for nonstationarity.

To maintain clarity, we defer the exact choice of $\gamma$ and $m_n$ in terms of $\chi$ and $A$ to subsection \ref{sec:concc}. Instead, in this subsection, we derive conditions (\ref{eq:conditionsshort}) (see (\ref{eq:first condition}), (\ref{eq:second condition}), and (\ref{eq:third condition}) in the online supplement A) to ensure an $n^{1/r}$ rate and to solve $\gamma,m_n$, and $r$ later to obtain the best possible choices for this sequence. Henceforth, we drop the suffix of $m_n$ for convenience.

\subsection{Outline of preparation step:} The importance of the preparation step is two-fold. It creates a platform for the conditional Gaussian approximation and regrouping by creating almost independent blocks. Moreover, these steps allow us to build a system of equations to solve for the approximation rate $\tau_n=n^{1/r}$ as a function of the decay rate $\chi$ in (\ref{eq:form of thetaip}). These equations are key in our generic approach deriving the optimal rate for slowly decaying dependence, and show how it possibly affects (see Figure \ref{fig:tau}) the optimal Gaussian approximation rate. 

For the truncating approximation, we exploit the uniform integrability to introduce a sequence $t_n \rightarrow$ 0 very slowly, such as  
\begin{eqnarray} \label{eq:tn part3}
t_n \log \log n \to \infty,
\end{eqnarray}
and use it at the truncation level $t_nn^{1/p}$. The truncation is defined through the operator  

$$T_b(v)=(T_b(v_1),\ldots, T_b(v_d))\tran, \mbox{ where } T_b(w) = \min (\max(w,-b),b).$$

\noindent For the $m$-dependence approximation step and the blocking approximation, assume 
\begin{eqnarray}\label{eq:define m}
 m = \lfloor n^Lt_n^k \rfloor, \quad 0<k<(\gamma-p)/(\gamma/2-1), \quad 0<L<1,
\end{eqnarray} 
\begin{eqnarray}\label{eq:conditionsshort}
n^{1/2-1/r} \Theta_{m,r} &\to& 0, \quad 
n^{1-\gamma/r} m^{\gamma/2-1} \to 0 \quad \text{and } \quad
n^{1/p-1/\gamma} \sum_{j=m+1}^{\infty} \delta_{j,p}^{p/\gamma} \to 0,
\end{eqnarray}
where the first term in (\ref{eq:conditionsshort}) is required for the $m$-dependence step, and the other two are for the blocking approximation. After these approximations, we have a partial sum process $S_n^{\diamond}$, with the following summarized definition:
\begin{eqnarray*}
S_i^{\diamond}&=& \sum_{j=1}^{q_i} A_j\quad \mbox{ with }\quad A_{j}=\sum_{i=(2jk_0-2k_0)m+1}^{2k_0jm} \tilde{X}_i,\\ 
\mbox{ where } \tilde{X}_{j}&=&E(T_{t_nn^{1/p}}(X_j)|\epsilon_{j}, \ldots, \epsilon_{j-m})- E(T_{t_nn^{1/p}}(X_j)), 
\end{eqnarray*}

\noindent and $k_0= \lfloor \Theta_{0,2}^2/\lambda_*\rfloor+2, q_i=  \lfloor i/(2k_0m) \rfloor$. For this truncated, $m$-dependent and blocked process $S_n^{\diamond}$, we have the approximation
\begin{eqnarray*}
\displaystyle\max_{1 \leq i \leq n} |S_i- S_i^{\diamond}|= o_P(n^{1/r}).
\end{eqnarray*}

\noindent See section \ref{ssc:prep stage} in the online Supplementary Material. Next, in subsections \ref{sec:caga} and \ref{sec:rcr}, we discuss how to obtain a Gaussian approximation for $S_n^{\diamond}$.

\ignore{
Thus these steps although presented in the same sequential order as of Berkes, Liu and Wu (2014, \cite{MR3178474}) might look like , these are essentially new steps that were necessary to accommodate a much larger non-stationary class of vector-valued processes. 
}

\subsection{Outline of conditional Gaussian approximation:} 
\label{sec:caga}
The blocks created in the preparation steps are not independent because two successive blocks share some $\epsilon_i$ in their shared border. In this second stage, we consider the partial sum process conditioned on these borderline $\epsilon_i$, which implies conditional independence. Berkes, Liu, and Wu (2014, \cite{MR3178474}) performed a similar treatment with a triadic decomposition for stationary scalar processes, and applied Sakhanenko's (2006, \cite{MR2302850}) Gaussian approximation result to the conditioned process.

Because the result of Sakhanenko (2006, \cite{MR2302850}) is only valid for $d=1$, we need to use the Gaussian approximation result from G{\"o}tze and Zaitsev (2008, \cite{MR2760567}) (see Proposition \ref{re:zaitsev result}) for $d \ge 2$. This incurs a cost of verifying a very technical sufficient condition on the covariance matrices of the independent vectors. This verification is particularly complicated in our case because we are dealing with a conditional process. We opt for a $k$-dic decomposition instead of the triadic decomposition in \cite{MR3178474}. This is necessary to accommodate the nonstationarity of the process.  We need $k_0> \Theta_{0,2}^2/\lambda_*$ (cf., (\ref{eq:define big block})), where $\lambda_*$ is mentioned in Condition 2.B.

\subsection{Outline of regrouping and unconditional Gaussian approximation:} 
\label{sec:rcr}
In the last part of our proof, we obtain the Gaussian approximation for the unconditional process by applying Proposition \ref{re:zaitsev result} one more time. In the second part of our proof, we consider the conditional variance (cf., $V_j(\bar{a}_{2k_0j},\bar{a}_{2k_0j+2k_0})=Var(Y_j(\bar{a}_{2k_0j},\bar{a}_{2k_0j+2k_0}))$ in (\ref{eq:Demeaned process}) of subsection \ref{sec:cga}) of the blocks. These conditional variances are one-dependent. In order to apply G{\"o}tze and Zaitsev's (2008, \cite{MR2760567}) result, we rearrange the sums of these variances into sums of independent blocks (cf., \ref{eq:identity of variance} in subsection \ref{sec:cga}). Owing to the nonstationarity, this regrouping is different and more complex than that of Berkes, Liu, and Wu (2014, \cite{MR3178474}). In particular, the regrouping procedure leads to matrices that may not be positive-definite and, hence, cannot be used directly as possible covariance matrices of Gaussian processes. We overcome this obstacle by introducing a novel positive-definitization that does not affect the optimal rate.

\subsection{Conclusion of the proof:} 
\label{sec:concc}
This subsection discusses the choice of the sequence $m, \gamma$, and the rate $\tau_n=n^{1/r}$, starting from the conditions in (\ref{eq:conditionsshort}) (see equations (\ref{eq:first condition}), (\ref{eq:second condition}), and (\ref{eq:third condition}) in the detailed version of the proof). Elementary calculations show that $r < p$ for $\chi<\chi_0$. Provided $1- (\chi+1)p/\gamma <0$, we have
\begin{eqnarray}\label{eq:third condition simplify}
\sum_{j=m+1}^{\infty} \delta_{j,p}^{p/\gamma} &\leq& \sum_{i=\lfloor\log_2 m\rfloor}^{\infty}\sum_{j=2^i}^{2^{i+1}-1} \delta_{j,p}^{p/\gamma} 
\leq \sum_{i=\lfloor\log_2 m\rfloor}^{\infty} 2^{i (1- p/\gamma)} \Theta_{2^i,p}^{p/\gamma} \\
\nonumber & =& \sum_{i=\lfloor\log_2 m\rfloor}^{\infty} 2^{i (1- p/\gamma)} O(2^{-\chi i p /\gamma} i^{-Ap/\gamma}) = O(m^{1- p/\gamma- \chi p /\gamma} (\log m)^{-Ap/\gamma}).
\end{eqnarray}

\noindent By (\ref{eq:tn part3}) and (\ref{eq:define m dup}), $\log m \asymp \log n.$ Assume that
\begin{eqnarray}
\label{eq:equation 1}
1/2-1/r - \chi L &=& 0,\quad A>\gamma/p, \\
\label{eq:equation 2}
1-\gamma/r+L(\gamma/2-1)&=&0, \quad  0<k<(\gamma/2-1)^{-1}(\gamma-p)\\\
\label{eq:equation 3}
1/p-1/\gamma + (1- (\chi+1)p/\gamma)L &=&  0.
\end{eqnarray}
Then, the conditions in (\ref{eq:conditionsshort}) hold. Solving the equations in (\ref{eq:equation 1}), (\ref{eq:equation 2}), and (\ref{eq:equation 3}), we obtain $r$ in (\ref{eq:taun}), as follows:
\begin{eqnarray*}
\gamma &=&\frac{(2p+p^2)\chi+p^2+3p+2+f_5^{1/2}}{2+2p+4\chi},\\
L &=& \frac{ f_1-f_2+\chi\sqrt{(p-2) (f_3-3p)}}{\chi f_4},
\end{eqnarray*}
with $f_1, \ldots, f_5$ given in (\ref{eq:fs}). Moreover, we specifically choose $A>2\gamma/p$ for a crucial step in the proof of our Gaussian approximation; see (\ref{eq:first term part 2}).

\begin{remark}
Figure \ref{fig:gamma and L} depicts how $\gamma$ and $L$ change with $p$ and $\chi$ for $\chi<\chi_0$. Note that $L$, the power of $n$ in the expression of $m$, is close to one if $\chi$ is small. This makes intuitive sense, because if the dependence decays very slowly, to make blocks of size $m$ (or a multiple of $m$) behave almost independently, we need a larger $L$.
\end{remark} 

\begin{figure} [h!]   
\centering
\begin{tabular}{cc}
\subfigure[]{\epsfig{file=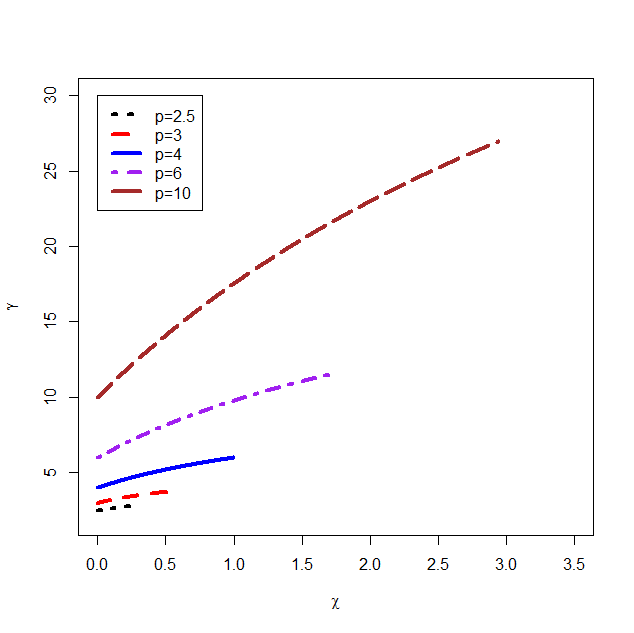, width=7 cm}}& 
\subfigure[]{\epsfig{file=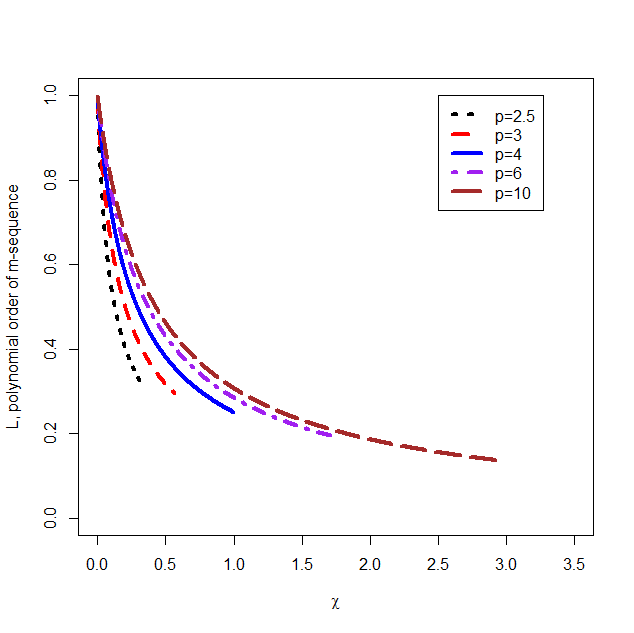, width=7 cm}}\\
\end{tabular}
\caption{(a) $\gamma$ as a function of $\chi$, (b) $L$ as a function of $\chi$}
\label{fig:gamma and L}
\end{figure}

\section{Proof of Theorem \ref{th:theorem 2}}
\label{sec:poc}
\begin{proof} \textit{Case 1 ($\chi>\chi_0$):} Note that the optimal power $\gamma$ and the optimal bound $1/r$ increase and decrease with $\chi$, respectively (see also Figures \ref{fig:tau} and \ref{fig:gamma and L}). This is a motivation behind tweaking our proof for the verification of (\ref{eq:zaitsev condition 1}) to handle the $(\log n)$ term in the choice of $l$ in (\ref{eq:choice of l}). When using the Nagaev inequality to show (\ref{eq:vja-evja small 2}), we use a power $\gamma'>\gamma$, while keeping the choice of $l$ (cf., \ref{eq:choice of l}) the same as before. We form a set of new equations:
\begin{eqnarray}\label{eq:new set}
1/2+1/p-2/r'+L'(1-(\chi+1)p/r')&=&0, \\  \nonumber
1/p-1/\gamma'+L'-L'(\chi+1)p/\gamma'&=&0, \\ \nonumber
1-\gamma'/r'+L'(\gamma'/2-1)&=&0.
\end{eqnarray}
The intuition behind the first of these equations is to use a higher power than $p$ in the $m$-dependence approximation. However, we have only defined moments up to $p$. Therefore, we use Lemma \ref{lem:truncated deltajy} to obtain a new equation corresponding to the $m$-dependence approximation using a power $r'$ that is little higher than $p$. The solution of (\ref{eq:new set}) has the property
\begin{eqnarray}\label{eq:imp property}
\gamma'<2(1+p+p\chi)/3,
\end{eqnarray} \noindent for $\chi>\chi_0$. In addition, $L'<L(\chi_0)$ (cf., Figure \ref{fig:gamma and L}) and, hence, $m^{1-\gamma'/2} \ll m'^{1-\gamma'/2}$, where $m'$ is taken as $n^{L'}t_n^{k}$, following (\ref{eq:define m dup}).  We apply Nagaev-type inequality from Liu, Xiao, and Wu  (2013, \cite{MR3114713}) to obtain
\begin{eqnarray}\label{eq:nagaev gamma}
 P(|\tilde{S}_m| \ge \sqrt{lm}) &\lesssim& \frac{m}{(lm)^{\gamma'/2}}\nu_R^{\gamma'+1}+\sum_{r=1}^R \exp \left(-c_{\gamma'} \frac{\lambda_r^2 l}{\tilde{\theta}_{r,2}^2}\right) + \frac{m^{\gamma'/2}\tilde{\Theta}_{m+1,\gamma'}^{\gamma'}}{(lm)^{\gamma'/2}}\\ 
\nonumber &+&\frac{m \sup_i\|T_{t_nn^{1/p}}(X_i)\|_{\gamma'}^{\gamma'}}{(lm)^{\gamma'/2}}+ \exp \left(-\frac{c_{\gamma'}l}{\sup_i\|T_{t_nn^{1/p}}(X_i)\|_2^2}\right),
\end{eqnarray}
where $\nu_R=\sum_{r=1}^{R}\mu_r$, $\mu_r= (\tau_r^{\gamma'/2-1} \tilde{\theta}_{r,\gamma'}^{\gamma'})^{1/(\gamma'+1)}$, $\lambda_r =\mu_r/\nu_R$, and  $\tilde{\theta}_{r,t}=\sum_{i=1+\tau_{r-1}}^{\tau_r}\tilde{\delta}_{i,t}$, for some sequence $0=\tau_0 <\tau_1< \ldots <\tau_R=m$. For the choice $\tau_r=2^{r-1}$ for $1 \leq r \leq R-1=\lfloor \log_2 m \rfloor$, we obtain $\nu_R^{\gamma'+1}=O(n^{\gamma'/p-1}t_n^{\gamma'-p})$ using (\ref{eq:imp property}), or (\ref{eq:tn part1}) under the decay condition on $\Theta_{i,p}$ in (\ref{eq:form of thetaip}). The third term and the exponential terms are straightforward to deal with. The fourth term is handled similarly to (\ref{eq:third term}). Combining these as in our new set of equations in (\ref{eq:new set}), we get $P(|\tilde{S}_m|\ge \sqrt{lm})=o(m/n)$, which is sufficient to conclude the proof, as proposed in (\ref{eq:vja-evja small 2}).

The positive-definitization technique introduced in (\ref{eq:define Vj1}) is validated in Proposition \ref{prop:Vj final}. This step requires $\gamma>4\chi$ for $\chi>\max(1/2,\chi_0)$. We observe that $\gamma'-4\chi=0$ has a root  $\chi_1>\chi_0$. This allows us to replace $\chi$ in the decay condition of $\Theta_{i,p}$ with $\min(\chi,\chi_1)$, and thus completes the proof. The arguments for the rest of the proof of Theorem \ref{th:main theorem} remain valid.

\textit{Case 2 ($\chi=\chi_0, 2 < p < 4$):} We apply Proposition 1 from Einmahl (1987, \cite{MR899446}). He proved a Gaussian approximation result for independent, but not necessarily identical vectors with a diagonal covariance matrix. The two remarks following the proposition mention that the diagonal nature of every covariance matrix can be relaxed if these matrices have bounded eigenvalues. A careful check of his proof reveals that it can be further relaxed to the assumption of bounded eigenvalues of the covariance matrix of a normalized block sum only. This allows us to replace $l$ (see (\ref{eq:choice of l})) in the conclusion of Proposition \ref{re:zaitsev result} with $l'$ without the logarithm term $(\log n)$ in the denominator and without the condition (\ref{eq:zaitsev condition 2}). Thus, we obtain a rate of $o_P(n^{1/p})$ for all $2<p<4$.

\textit{Case 3 ($\chi=\chi_0, p \ge 4$):} In this case, we do not have a similar optimal Gaussian approximation result for independent, but not identically distributed random vectors. Instead we apply Proposition \ref{re:zaitsev result} again. The sufficient conditions in that result lead to an unavoidable $(\log n)$ term in the choice of $l$ (see \ref{eq:choice of l}). This, in turn, leads to a rate of $o_P(n^{1/p} \log n) $. Note that $\chi_0>1/2-1/p$ for all $p>2$. From the proof of the case $0<\chi<\chi_0$, consider (\ref{eq:nagaev difficult case}). Then, observe that if $\chi=\chi_0$, 
\begin{equation*}
    \frac{n}{m}P(|\tilde{S}_m| \ge \sqrt{lm}) =O((\log n)^p t_n^{k(p/\gamma- p/2)}),
\end{equation*}
which may diverge to $\infty$. To deal with this difficulty in this special case,  we choose a different $m$ sequence. Our new set of conditions with $\tau_n=n^{1/p} (\log n)^{\delta}$ are
\begin{eqnarray*}
n^{1/2-1/p}m^{-\chi} (\log n)^{-A-\delta} &\to& 0, \\
n^{1/p-1/\gamma} m^{1-(\chi+1)p/\gamma} (\log n)^{-Ap/\gamma} &\to& 0, \\
n^{1-\gamma/p} (\log n)^{-\gamma \delta} m^{\gamma/2-1} &\to& 0, \\
(\log n)^{\gamma} m^{1-\gamma/2} n^{\gamma/p-1}t_n^{\gamma-p} &\to& 0, 
\end{eqnarray*}
\noindent where the last is obtained using $\gamma$th moment in (\ref{eq:nagaev gamma}). Let $m=\lfloor n^L (\log n)^{2\gamma/(\gamma-2)}t_n^{k} \rfloor$, with $0<k<(\gamma/2-1)^{-1}(\gamma-p)$. Then, we can achieve $\delta=1$. We still have the same set of equations for $L,\gamma$, and $r$ shown in (\ref{eq:equation 1}), (\ref{eq:equation 2}), and (\ref{eq:equation 3}), respectively. A careful check reveals that the rest of the proof follows with this modified $m$ sequence. 
\end{proof}

\noindent \textbf{Supplementary Material}  

The online Supplementary Material contains detailed proofs of Theorem \ref{th:main theorem} (section \ref{sec:proofdetails}) and some useful lemmas (section \ref{sec:sul}).

\noindent \textbf{Acknowledgements}

We are grateful to the Associate Editor and an anonymous referee for their helpful feedback and comments. This study was partially supported by NSF/DMS 1405410.

\bibliographystyle{imsart-nameyear}
\bibliography{gaussian}

%%%%%%%%%%%SUPPLEMENT%%%%%%%%%%%%%

\newpage

\setcounter{page}{1}

\begin{supplement}[id=suppA]
  \sname{Online Supplementary}
%  \stitle{Proofs}
  \slink[doi]{COMPLETED BY THE TYPESETTER}
  \sdatatype{.pdf}
  \sdescription{The online supplementary material contains the detailed proofs of Theorem \ref{th:main theorem} and some useful lemmas. The long detailed steps are in section \ref{sec:proofdetails} and the lemmas are postponed to section \ref{sec:sul}.
}
 \end{supplement}

\section{Detailed Steps of the Proof of Theorem 2.1}\label{sec:proofdetails}

\subsection{Preparation stage:}\label{ssc:prep stage}
The preparation stage consists of truncation approximation, $m$-dependence approximation and blocking approximation.

\subsubsection{Truncation approximation:}

\noindent Truncation approximation is necessary to allow higher moments manipulations. For $b>0$ and $v = (v_1, \ldots, v_d)\tran \in \mathbb{R}^d$, define
\begin{eqnarray}\label{eq:Truncation operator}
T_b(v)=(T_b(v_1),\ldots, T_b(v_d))\tran, \mbox{ where } T_b(w) = \min (\max(w,-b),b).
\end{eqnarray}

\begin{proposition}\label{prop:trunc}
Assume Condition (2.A). It is possible to choose a sequence $t_n \to 0$ slow enough such that we have
\begin{eqnarray}\label{eq:Approximation error 1}
\displaystyle\max_{1\leq i\leq n} |S_i- S_i^{\oplus} |= o_{P}(n^{1/p}), \mbox{ where } S_l^{\oplus} =\displaystyle\sum_{i=1}^{l} [T_{t_n n^{1/p}}(X_i)- E T_{t_n n^{1/p}}(X_i)].
\end{eqnarray}
\end{proposition}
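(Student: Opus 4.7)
The plan is to reduce the maximum to a deterministic sum via the crude triangle inequality, then control the truncation residuals $Y_j := X_j - T_{b_n}(X_j)$ using uniform integrability alone (no dependence structure is needed here). With $b_n := t_n n^{1/p}$ and $E(X_j) = 0$, one has $E T_{b_n}(X_j) = -EY_j$, so $S_i - S_i^\oplus = \sum_{j=1}^i (Y_j - EY_j)$ and hence
$$\max_{1 \leq i \leq n} |S_i - S_i^\oplus| \leq \sum_{j=1}^n |Y_j| + \sum_{j=1}^n |EY_j|.$$

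Next I would bound $E|Y_j|$ uniformly in $j$. Since $T_{b_n}$ acts coordinate-wise and $w - T_{b_n}(w)$ is supported on $\{|w| > b_n\}$ with $|w - T_{b_n}(w)| \leq |w|$, one checks that $|Y_j| \leq |X_j|\mathbf{1}_{|X_j| > b_n}$ in Euclidean norm (the indicator can be taken in the Euclidean norm since any coordinate exceeding $b_n$ forces $|X_j| > b_n$). The elementary bound $|x|\mathbf{1}_{|x| > b} \leq b^{1-p}|x|^p \mathbf{1}_{|x|>b}$, valid for $p>1$, then yields $E|Y_j| \leq b_n^{1-p}\eta(b_n)$, where $\eta(b) := \sup_i E(|X_i|^p \mathbf{1}_{|X_i|>b})$; by Condition (2.A), $\eta(b) \to 0$ as $b \to \infty$. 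Summation gives $\sum_{j=1}^n E|Y_j| \leq n^{1/p}\eta(b_n)/t_n^{p-1}$, so by Markov's inequality the random sum $\sum_{j=1}^n |Y_j| = O_P(n^{1/p}\eta(b_n)/t_n^{p-1})$, while the deterministic sum $\sum_{j=1}^n |EY_j|$ obeys the same bound.

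The main obstacle is choosing $t_n \to 0$ slowly enough so that simultaneously $b_n \to \infty$ (ensuring $\eta(b_n) \to 0$) and $\eta(b_n)/t_n^{p-1} \to 0$; this is delicate because uniform integrability gives no a priori rate for $\eta$. I would resolve this by a diagonal construction: define $\Phi(a) := \sup_{b \geq a} \eta(b)^{1/p}$, which is non-increasing with $\Phi(a) \to 0$, and set
$$t_n := \max\bigl(\Phi(n^{1/(2p)})^{1/(p-1)},\, n^{-1/(2p)}\bigr).$$
Then $t_n \to 0$, and $t_n n^{1/p} \geq n^{1/(2p)} \to \infty$. Using monotonicity of $\eta$, $\eta(b_n) \leq \eta(n^{1/(2p)}) \leq \Phi(n^{1/(2p)})^p$, while $t_n^{p-1} \geq \Phi(n^{1/(2p)})$, so $\eta(b_n)/t_n^{p-1} \leq \Phi(n^{1/(2p)})^{p-1} \to 0$. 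Combining with the bounds above yields $\max_{1 \leq i \leq n}|S_i - S_i^\oplus| = o_P(n^{1/p})$, as desired.
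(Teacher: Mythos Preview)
Your argument is correct, but it proceeds differently from the paper. The paper does not go through the $L^1$ bound $\sum_{j=1}^n E|Y_j|$ and Markov's inequality. Instead, it observes that
\[
\sup_j P\bigl(|X_j| > t_n n^{1/p}\bigr) \leq \frac{1}{n t_n^p}\sup_j E\bigl(|X_j|^p \mathbf{1}_{|X_j|>t_n n^{1/p}}\bigr) = \frac{\eta(b_n)}{n t_n^p},
\]
and chooses $t_n \to 0$ slowly enough that $\eta(b_n)/t_n^p \to 0$ (this is your $\eta(b_n)/t_n^{p-1}\to 0$ strengthened by one factor of $t_n$, arranged by the same diagonal device). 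A union bound then gives $P\bigl(X_j = T_{b_n}(X_j)\text{ for all }j\le n\bigr)\to 1$, so the random part $\max_i|S_i - \sum_{j\le i}T_{b_n}(X_j)|$ is \emph{exactly zero} with probability tending to one; only the deterministic centering $\sum_{j\le n}|E T_{b_n}(X_j)| = n\cdot o(n^{1/p-1})$ remains. Your $L^1$/Markov route is more elementary and fully sufficient for the proposition as stated; the paper's route yields the slightly stronger ``no truncation occurs w.h.p.'' statement and, more to the point, its choice of $t_n$ is made to simultaneously secure extra properties ($t_n\log\log n\to\infty$ and a $\gamma$th-moment condition with $\gamma>p$) that are needed later in the blocking and Gaussian-approximation steps.
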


\begin{proof} of Proposition \ref{prop:trunc}.
We introduce a very slowly converging sequence $t_n \to 0$ based on the uniform integrability condition (2.A). For every $t>0$, we have
\begin{equation}\label{eq:trunc}
\sup_{i} \frac{1}{t^p}E(|X_i|^p{\bf 1}_{|X_i|>tn^{1/p}})=0
 \mbox{ and } n \sup_{i} E \min ( \frac{|X_i|^\gamma}{t^{\gamma}n^{\gamma/p}}, 1 ) \rightarrow 0 \mbox{ as } n \to \infty,
\end{equation}
where $\gamma > p$. The second relation follows from Lemma \ref{lem:important inequality}. Clearly (\ref{eq:trunc}) implies that 
\begin{equation}\label{eq:tn part1}
\sup_{i} \frac{1}{t_n^p}E (|X_i|^p{\bf 1}_{|X_i|>t_n n^{1/p}}) 
 + n \sup_{i} E \min ( \frac{|X_i|^\gamma}{t_n^{\gamma}n^{\gamma/p}}, 1 ) \rightarrow 0 \mbox{ as } n \to \infty,
\end{equation}
holds for a sequence $t_n \to 0$ very slowly. Without loss of generality we can let \begin{eqnarray} \label{eq:tn part3 dup}
t_n \log \log n \to \infty
\end{eqnarray}
since otherwise we can replace $t_n$ by $\max(t_n, (\log\log n)^{-1/2} ) $ (say). The truncation operator $T_b$ in (\ref{eq:Truncation operator}) is Lipschitz continuous with Lipschitz constant 1. Let
\begin{eqnarray}\label{eq:partial sum of truncated}
R_{c,l}=\displaystyle\sum_{i=1+c}^{l+c}X_i^{\oplus}=\displaystyle\sum_{i=1+c}^{l+c} [T_{t_n n^{1/p}}(X_i)- E T_{t_n n^{1/p}}(X_i)].
\end{eqnarray}

By (\ref{eq:tn part1}), we have $P( \max_{i \leq n} |S_i- \sum_{j=1}^i T_{t_n n^{1/p}}(X_j) | = 0 ) \to 1$ in view of
\begin{eqnarray*}
\sup_j P\left(|X_j|>t_nn^{1/p}\right)\leq \sup_j \frac{1}{n t_n^p}E\left(|X_j|^p I\left(|X_j|>t_n n^{1/p}\right)\right)= o(1/n).
\end{eqnarray*}
Also by (\ref{eq:tn part1}), $\max_{j \le n} |E (X_j- T_{t_nn^{1/p}}(X_j))| = o(n^{1/p-1})$. Hence (\ref{eq:Approximation error 1}) follows.
\end{proof}

\subsubsection{$m$-dependence approximation:} The $m$-dependence approximation is a very important tool that is extensively used in literature; see for example the Gaussian approximation in Liu and Lin (2009, \cite{MR2485027}) and Berkes, Liu and Wu (2014, \cite{MR3178474}). For a suitably chosen sequence $m$, we look at the conditional mean $E(X_i|\epsilon_i, \ldots \epsilon_{i-m})$. This gives a very simple yet effective way to handle the original process in terms of a collection of $\epsilon_i$'s. Define the partial sum process
\begin{equation}\label{eq:partial sum}
\tilde{R}_{c,l}=\displaystyle\sum_{i=1+c}^{l+c} \tilde{X}_j, \mbox{ where }
\tilde{X}_{j}=E(T_{t_nn^{1/p}}(X_j)|\epsilon_{j}, \ldots, \epsilon_{j-m})- E(T_{t_nn^{1/p}}(X_j)).
\end{equation}
Write $\tilde{R}_{0,i} = \tilde{S}_i.$ From Lemma A1 in Liu and Lin (2009, \cite{MR2485027}), we have
\begin{eqnarray}\label{eq:Approximation for Ri}
\|\displaystyle\max_{1\leq l \leq n}|S^{\oplus}_l-\tilde{S}_{l}| \|_r \leq c_r n^{1/2} \Theta_{1+m,r}.
\end{eqnarray}
\noindent The proofs in \cite{MR2485027} are for stationary processes. Since our $\delta_{j, r}$ in (\ref{eq:fdm}) is defined in an uniform manner, the proof goes through for the non-stationary case as well. Assume
\begin{eqnarray}\label{eq:first condition}
n^{1/2-1/r} \Theta_{m,r} \to 0.
\end{eqnarray} 
By (\ref{eq:Approximation for Ri}) and (\ref{eq:first condition}), we have $n^{1/r}$ convergence in the $m$-dependence approximation step
\begin{eqnarray}\label{eq:Approximation error 2}
\displaystyle\max_{1\leq i\leq n} |S_i^{\oplus}-\tilde{S}_i|= o_P(n^{1/r}).
\end{eqnarray}

% By condition (\ref{eq:second condition}) $m=o(n^{(\gamma/r-1)/(\gamma/2-1)}).$ Hence $q \rightarrow \infty$ as $n$ grows. 
%Choose $K_0\in \mathcal{N}$ such that $q_k \geq2$ whenever $k\geq K_0$, and let $N_0=3^{K_0}$. For $k\geq %K_0$ define

\ignore{
\noindent Define functional dependence measure for the truncated process $(T_{t_nn^{1/p}}(X_i))_{i \leq n}$ as 
\begin{eqnarray*}
\delta_{j,l}^{\oplus}=\sup_{i} \| T_{t_nn^{1/p}}(X_i)- T_{t_nn^{1/p}}(X_{i,(i-j)})\|_l, \mbox{ where } l \ge 2.
\end{eqnarray*}
Similarly, define the functional dependence measure for the $m$-dependent process $(\tilde{X}_{i})$ as \begin{eqnarray*}
\tilde{\delta}_{j,l}= \sup_{i} \| \tilde{X}_{i}-\tilde{X}_{i,(i-j)} \|_l.
\end{eqnarray*}

\noindent For these dependence measures, the following inequality holds for all $l \ge 2$:
\begin{eqnarray}\label{eq:fdm inequality}
\tilde{\delta}_{j,l} \leq \delta_{j,l}^{\oplus} \leq \delta_{j,l}.
\end{eqnarray}
}

\ignore{Proposition \ref{prop:block} gives the blocking approximation result.}

\subsubsection{Blocking approximation:} 
Towards the blocking approximation, we approximate the partial sum process $\tilde S_i$ by sums of $A_{j}$ where, for $j \ge 0$,
\begin{eqnarray}\label{eq:define big block}
A_{j+1}=\sum_{i=2jk_0m+1}^{(2k_0j+2k_0)m} \tilde{X}_i, \mbox{ where } k_0= \lfloor \Theta_{0,2}^2/\lambda_*\rfloor+2.
\end{eqnarray}
To this end, we will need the following two conditions, for some $\gamma>p$,
\begin{eqnarray}\label{eq:second condition}
n^{1-\gamma/r} m^{\gamma/2-1}\to 0,
\end{eqnarray}
\begin{eqnarray}\label{eq:third condition}
n^{1/p-1/\gamma} \sum_{j=m+1}^{\infty} \delta_{j,p}^{p/\gamma} \to 0.
\end{eqnarray}

We now define functional dependence measure for the truncated process $(T_{t_nn^{1/p}}(X_i))_{i \leq n}$ as 
\begin{eqnarray*}
\delta_{j,l}^{\oplus}=\sup_{i} \| T_{t_nn^{1/p}}(X_i)- T_{t_nn^{1/p}}(X_{i,(i-j)})\|_l, \mbox{ where } l \ge 2.
\end{eqnarray*}
Similarly, define the functional dependence measure for the $m$-dependent process $(\tilde{X}_{i})$ as \begin{eqnarray*}
\tilde{\delta}_{j,l}= \sup_{i} \| \tilde{X}_{i}-\tilde{X}_{i,(i-j)} \|_l.
\end{eqnarray*}

\noindent For these dependence measures, the following inequality holds for all $l \ge 2$:
\begin{eqnarray}\label{eq:fdm inequality}
\tilde{\delta}_{j,l} \leq \delta_{j,l}^{\oplus} \leq \delta_{j,l}.
\end{eqnarray}
We now proceed to proving Proposition \ref{prop:block}, the blocking approximation result. As mentioned in the main text, we need to assume conditions (\ref{eq:second condition}) and (\ref{eq:third condition}) for this step. The almost-polynomial rate of $m$ sequence as mentioned in 
(\ref{eq:define m dup}) is also assumed.

Remark: We need another condition for the blocking approximation (see (\ref{eq:third term simplify}) in the proof of Lemma \ref{lem:rosenthal lemma}). However, we skip it here and choose $m$ and $\gamma$ such that conditions (\ref{eq:first condition}), (\ref{eq:second condition}) and (\ref{eq:third condition}) are met. These will automatically imply this fourth one in view of (\ref{eq:form of thetaip}).

We assume an almost polynomial rate for $m$ sequence: for some $0 < L < 1$,
\begin{eqnarray}\label{eq:define m dup}
 m = \lfloor n^Lt_n^k \rfloor, \quad 0<k<(\gamma-p)/(\gamma/2-1).
\end{eqnarray}

\begin{proposition}\label{prop:block}
Assume (\ref{eq:second condition}) and (\ref{eq:third condition}) for some $\gamma>p$. Moreover, assume (\ref{eq:define m dup}) for the $m$ sequence and (\ref{eq:form of thetaip}) for the decay rate of $\Theta_{i,p}$ with some $A>\gamma/p$. Then
\begin{eqnarray}\label{eq:Approximation error 3}
\displaystyle\max_{1 \leq i \leq n} |\tilde{S}_i- S_i ^{\diamond}|= o_P(n^{1/r}), \mbox{ where } S_i^{\diamond}= \sum_{j=1}^{q_i} A_j, \,\, q_i=  \lfloor i/(2k_0m) \rfloor.
\end{eqnarray}
\end{proposition}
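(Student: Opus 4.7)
The plan is to isolate the residual ``incomplete block'' and control its maximum via a moment inequality. Because $S_i^\diamond$ sums only the complete $2k_0m$-blocks $A_1+\cdots+A_{q_i}$ with $q_i=\lfloor i/(2k_0m)\rfloor$, one has the exact identity
\[
\tilde S_i-S_i^\diamond\;=\;\sum_{t=2k_0mq_i+1}^{i}\tilde X_t\;=\;\tilde R_{\,2k_0mq_i,\;i-2k_0mq_i},
\]
an initial segment of a single block of length at most $2k_0m$. This yields the pathwise bound
\[
\max_{1\le i\le n}|\tilde S_i-S_i^\diamond|\;\le\;\max_{0\le j\le n/(2k_0m)}\;\max_{1\le l\le 2k_0m}\bigl|\tilde R_{2k_0mj,\,l}\bigr|,
\]
and a union bound over the $O(n/m)$ blocks, combined with Markov's inequality at power $\gamma$, reduces the problem to verifying
\begin{equation}\label{eq:blockingreduction}
\frac{n}{m}\cdot\frac{\sup_c\bigl\|\max_{1\le l\le 2k_0m}|\tilde R_{c,l}|\bigr\|_\gamma^\gamma}{n^{\gamma/r}}\;\longrightarrow\;0.
\end{equation}

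Next I would apply the Rosenthal-type maximal moment inequality (Lemma \ref{lem:rosenthal lemma}) to each incomplete-block partial sum, exploiting three structural features of $(\tilde X_i)$: (i) its $m$-dependence, which by (\ref{eq:fdm inequality}) makes all relevant dependence measures dominated by $\delta_{j,p}$ and hence by $\Theta_{i,p}$; (ii) the pathwise bound $|\tilde X_i|\le 2\sqrt d\,t_nn^{1/p}$ coming from truncation, which interpolates to $\sup_i\|\tilde X_i\|_\gamma^\gamma\lesssim t_n^{\gamma-p}n^{\gamma/p-1}$ via (2.A); and (iii) the $L^2$-summability $\Theta_{0,2}<\infty$ controlling the Brownian scale. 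With these three inputs the Rosenthal-type bound splits into a Brownian-scale contribution of order $m^{\gamma/2}$, a Lindeberg-truncation contribution of order $m\cdot t_n^{\gamma-p}n^{\gamma/p-1}$, and a higher-lag-dependence contribution that, after routing through the interpolation $\delta_{j,\gamma}^\oplus\lesssim(t_nn^{1/p})^{(\gamma-p)/\gamma}\delta_{j,p}^{p/\gamma}$, is expressed in terms of $\sum_{j\ge m+1}\delta_{j,p}^{p/\gamma}$.

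Substituting these three pieces into (\ref{eq:blockingreduction}) leaves three rates to check: $n^{1-\gamma/r}m^{\gamma/2-1}$, $t_n^{\gamma-p}n^{\gamma/p-\gamma/r}$, and $n^{1-\gamma/r}m^{-1}\bigl(\sum_{j\ge m+1}\delta_{j,p}^{p/\gamma}\bigr)^{\gamma}$. The first is $o(1)$ directly by (\ref{eq:second condition}); the second is $o(1)$ because $r<p$ (so $n^{\gamma/p-\gamma/r}\to 0$) and $t_n\to 0$ with $\gamma>p$; the third is $o(1)$ by raising (\ref{eq:third condition}) to the power $\gamma$ and combining with $r<p$, together with the tail decay (\ref{eq:form of thetaip}) and the assumption $A>\gamma/p$ to absorb the residual logarithmic factor, much as in the computation (\ref{eq:third condition simplify}). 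Combining the three bounds yields the desired $o_P(n^{1/r})$ approximation.

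The main obstacle I anticipate is calibrating Lemma \ref{lem:rosenthal lemma} so that the high-lag dependence appears in exactly the form $\bigl(\sum_{j\ge m+1}\delta_{j,p}^{p/\gamma}\bigr)^\gamma$ of condition (\ref{eq:third condition}), rather than in the cruder $\sum\delta_{j,\gamma}^\gamma$: routing the dependence through $\delta_{j,p}^{p/\gamma}$ (with exponent strictly below $1$) is what permits the argument to operate under the weaker $p$-th moment hypothesis while still extracting a $\gamma$-th moment. Simultaneously, retaining the truncation factor $t_n^{\gamma-p}$ in the Lindeberg term, rather than only an $O(1)$ bound, is what converts $O_P(n^{1/r})$ into the sharper $o_P(n^{1/r})$.
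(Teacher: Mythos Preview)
Your approach is correct and essentially identical to the paper's: reduce $\tilde S_i-S_i^\diamond$ to an incomplete-block maximum, apply a union bound over the $O(n/m)$ blocks together with Markov's inequality at power $\gamma$, and invoke Lemma~\ref{lem:rosenthal lemma} for the maximal moment bound. The paper's proof is even terser---it simply quotes Lemma~\ref{lem:rosenthal lemma} as giving $\max_c E(\max_{l\le 2k_0m}|\tilde R_{c,l}|^\gamma)=O(m^{\gamma/2})$ and uses only (\ref{eq:second condition}) to finish, packaging all of your three-term discussion into that lemma; the paper also inserts an explicit vanishing threshold $\phi_n=(n^{1-\gamma/r}m^{\gamma/2-1})^{1/(2\gamma)}$ to make the $o_P$ conclusion clean.

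One caveat on your inline sketch of the Rosenthal bound: the decomposition in Lemma~\ref{lem:rosenthal lemma} actually has \emph{four} pieces, and you have omitted the weighted intermediate-lag term $m^{1/\gamma}\sum_{j=1}^{m}j^{1/2-1/\gamma}\tilde\delta_{j,\gamma}$ (term $III$ in the paper's proof). That term is not covered by your ``higher-lag'' contribution $\sum_{j\ge m+1}\delta_{j,p}^{p/\gamma}$, and it is precisely where the hypothesis $A>\gamma/p$ is used (at the borderline $\chi=\chi_0$). Your stated rate $n^{1-\gamma/r}m^{-1}(\sum_{j\ge m+1}\delta_{j,p}^{p/\gamma})^\gamma$ for the higher-lag piece is also off: term $II$ in the lemma carries an $m^{1/2}$ prefactor, so after substitution the relevant rate is $n^{\gamma/p-\gamma/r}m^{\gamma/2-1}t_n^{\gamma-p}(\sum)^\gamma$, not what you wrote. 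Since you cite Lemma~\ref{lem:rosenthal lemma} directly, and that lemma is proved separately to yield the clean $O(m^{\gamma/2})$ bound, these inaccuracies in your paraphrase do not invalidate the argument---but if you intend to prove the lemma inline, you will need to handle term $III$ explicitly.
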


\begin{proof} of Proposition \ref{prop:block}: 
Let $\mathcal{S}= \{2ik_0m, 0 \le i \le q_n\}$, $\phi_n = (n^{1-\gamma/r} m^{\gamma/2-1})^{1/(2\gamma)}$. Then
\begin{eqnarray*}
P\left( \displaystyle\max_{1\leq l\leq n} |\tilde{R}_{0,l}-\displaystyle\sum_{j=1}^{\lfloor l/(2k_0m) \rfloor} A_{j} | \geq \phi_n n^{1/r}  \right) &\leq& \frac{n}{2k_0m}  \max_{c \in \mathcal{S}}P( \displaystyle \max_{1 \leq l\leq 2k_0m}|\tilde{R}_{c,l}| \geq \phi_n n^{1/r} )\\  \nonumber
& \leq& n\max_{c \in \mathcal{S}} \frac{E(\max_{1\leq l \leq 2k_0m}|\tilde{R}_{c,l}|^{\gamma})}{2k_0m\phi_n^{\gamma}n^{\gamma/r}} = O(\phi_n^\gamma),
\end{eqnarray*}
\noindent from the assumption (\ref{eq:second condition})  and Lemma \ref{lem:rosenthal lemma}. Since $\phi_n \to 0$, (\ref{eq:Approximation error 3}) follows.
\end{proof}

\ignore{
\begin{proof}
Let $\mathcal{S}= \{2ik_0m, 0 \le i \le q_n\}$ and $\phi_n = (n^{1-\gamma/r} m^{\gamma/2-1})^{1/(2\gamma)}$. Then
\begin{eqnarray*}
P\left( \displaystyle\max_{1\leq l\leq n} |\tilde{R}_{0,l}-\displaystyle\sum_{j=1}^{\lfloor l/(2k_0m) \rfloor} A_{j} | \geq \phi_n n^{1/r}  \right) &\leq& \frac{n}{2k_0m}  \max_{c \in \mathcal{S}}P( \displaystyle \max_{1 \leq l\leq 2k_0m}|\tilde{R}_{c,l}| \geq \phi_n n^{1/r} )\\  \nonumber
& \leq& n\max_{c \in \mathcal{S}} \frac{E(\max_{1\leq l \leq 2k_0m}|\tilde{R}_{c,l}|^{\gamma})}{2k_0m\phi_n^{\gamma}n^{\gamma/r}} = O(\phi_n^\gamma),
\end{eqnarray*}
\noindent from the assumption (\ref{eq:second condition})  and Lemma \ref{lem:rosenthal lemma}. Since $\phi_n \to 0$, (\ref{eq:Approximation error 3}) follows.
\end{proof}
}

\noindent Summarizing (\ref{eq:Approximation error 1}), (\ref{eq:Approximation error 2}) and (\ref{eq:Approximation error 3}), we can work on $S_i^{\diamond}$ in view of
\begin{eqnarray}\label{eq:Approximation error 4}
\displaystyle\max_{1 \leq i \leq n} |S_i- S_i^{\diamond}|= o_P(n^{1/r}).
\end{eqnarray}

\label{sec:tmb}
\ignore{

The first part of our proof consists of series of approximations to create almost independent blocks. The first of them, the truncation approximation will ensure the optimal $n^{1/p}$ bound. Secondly, we use the $m-$dependence approximation for a suitably chosen sequence $m_n$ in terms of the decay rate $\chi$. Lastly, the blocking approximation requires some sharp Rosenthal-type inequality that needs $\gamma$th moment of the block-sums in the numerator with $\gamma>p$. It is essential to use a power higher than $p$ to obtain a better rate.

To maintain clarity, we defer the exact choice of $\gamma$ and $m_n$ in terms of $\chi$ and $A$ to subsection \ref{sec:concc}. Instead, in this subsection we come up with conditions (\ref{eq:first condition}), (\ref{eq:second condition}) and (\ref{eq:third condition}) to ensure $n^{1/r}$ rate and solve $\gamma,m_n$ and $r$ later to obtain the best possible choices for this sequences. Henceforth, we drop the suffix of $m_n$ for our convenience.
}

%\subsubsection{Truncation approximation:} 

\ignore{
\subsubsection{$m$-dependence approximation:}
The $m$-dependence approximation is a very important tool that is extensively used in literature; see for example the Gaussian approximation in Liu and Lin (2009, \cite{MR2485027}) and Berkes, Liu and Wu (2014, \cite{MR3178474}). For a suitably chosen sequence $m$, we look at the conditional mean $E(X_i|\epsilon_i, \ldots \epsilon_{i-m})$. This gives a very simple yet effective way to handle the original process in terms of a collection of $\epsilon_i$'s. As the dependence of $X_i$ and $X_{i+k}$ slowly decrease as $k$ grows, if we can divide the partial sum process in blocks of sufficiently long, their behavior is close to that of a block-independent process. This strategy allows us to apply the existing Gaussian approximation results in the literature suitable for independent process. Define the partial sum process
\begin{equation}\label{eq:partial sum}
\tilde{R}_{c,l}=\displaystyle\sum_{i=1+c}^{l+c} \tilde{X}_j, \mbox{ where }
\tilde{X}_{j}=E(T_{t_nn^{1/p}}(X_j)|\epsilon_{j}, \ldots, \epsilon_{j-m})- E(T_{t_nn^{1/p}}(X_j)).
\end{equation}
Write $\tilde{R}_{0,i} = \tilde{S}_i.$ From Lemma A1 in Liu and Lin (2009, \cite{MR2485027}), we have
\begin{eqnarray}\label{eq:Approximation for Ri}
\|\displaystyle\max_{1\leq l \leq n}|S^{\oplus}_l-\tilde{S}_{l}| \|_r \leq c_r n^{1/2} \Theta_{1+m,r}.
\end{eqnarray}
\noindent The proofs in \cite{MR2485027} are for stationary processes. Since our $\delta_{j, r}$ in (\ref{eq:fdm}) is defined in an uniform manner, the proof goes through for the non-stationary case as well. Assume
\begin{eqnarray}\label{eq:first condition}
n^{1/2-1/r} \Theta_{m,r} \to 0.
\end{eqnarray} 
By (\ref{eq:Approximation for Ri}) and (\ref{eq:first condition}), we have $n^{1/r}$ convergence in the $m$-dependence approximation step
\begin{eqnarray}\label{eq:Approximation error 2}
\displaystyle\max_{1\leq i\leq n} |S_i^{\oplus}-\tilde{S}_i|= o_P(n^{1/r}).
\end{eqnarray}
}

% By condition (\ref{eq:second condition}) $m=o(n^{(\gamma/r-1)/(\gamma/2-1)}).$ Hence $q \rightarrow \infty$ as $n$ grows. 
%Choose $K_0\in \mathcal{N}$ such that $q_k \geq2$ whenever $k\geq K_0$, and let $N_0=3^{K_0}$. For $k\geq %K_0$ define

In the next two subsections we shall provide details of the arguments for steps mentioned in sections \ref{sec:caga} and \ref{sec:rcr}. section \ref{sec:cga} presents the conditional Gaussian approximation, where we shall apply Proposition \ref{re:zaitsev result} stated in section \ref{sec:sul}. section \ref{sec:uncga} deals with unconditional Gaussian approximation and regrouping.

\subsection{Conditional Gaussian approximation:}
\label{sec:cga}
The blocks $A_j$ created in (\ref{eq:define big block}) after the blocking approximation are weakly independent; except they share some dependence on the border. In this subsection, we look at the conditional process given the $\epsilon_i$ the blocks share in their borders. Demeaning the conditional process, we apply the Proposition \ref{re:zaitsev result} for the Gaussian approximation. For $1 \leq i \leq n$, let $\tilde{H}_i$ be a measurable function such that 
\begin{eqnarray}
\tilde{X}_{i}= \tilde{H}_{i} (\epsilon_{i},\ldots, \epsilon_{i-m}).
\end{eqnarray}

\noindent Recall Proposition \ref{prop:block} for the definition of $q_i$. Let $q = q_n$. For $j=1,\ldots, q$, define 
\begin{eqnarray*}
\bar{a}_{2k_0j}= \{a_{(2k_0j-1)m+1}, \ldots, a_{2k_0jm} \}
\mbox{ and }
a= \{\ldots, \bar{a}_0,\bar{a}_{2k_0},\bar{a}_{4k_0},\ldots  \}.
\end{eqnarray*}
Given $a$, define, for $2k_0jm+1 \leq i \leq (2k_0j+1)m$,
\begin{eqnarray*}
\tilde{X}_i(\bar{a}_{2k_0j})= \tilde{H}_i(\epsilon_i, \ldots, \epsilon_{2k_0jm+1},a_{2k_0jm}, \ldots ,a_{i-m})
\end{eqnarray*}
and for $(2k_0j+2k_0-1)m+1 \leq i \leq (2k_0j+2k_0)m$,
\begin{eqnarray*}
\tilde{X}_i(\bar{a}_{2k_0j+2k_0})= \tilde{H}_i(a_i, \ldots, a_{(2k_0j+2k_0-1)m+1},\epsilon_{(2k_0j+2k_0-1)m}, \ldots, \epsilon_{i-m}).
\end{eqnarray*}
Further, define the blocks as following, 
\begin{eqnarray}\label{eq:define blocks}
 F_{4j+1}(\bar{a}_{2k_0j})&=& \displaystyle\sum_{i=2k_0jm+1}^{(2k_0j+1)m} \tilde{X}_i(\bar{a}_{2k_0j}), \\ \nonumber
  F_{4j+2}&=& \displaystyle\sum_{i=(2k_0j+1)m+1}^{(2k_0j+k_0)m} \tilde{X}_i, \quad F_{4j+3} = \sum_{i=(2k_0j+k_0)m+1}^{(2k_0j+2k_0-1)m} \tilde{X}_i, \\ \nonumber
  F_{4j+4}(\bar{a}_{2k_0j+2k_0})&=& \displaystyle\sum_{i=(2k_0j+2k_0-1)m+1}^{(2k_0j+2k_0)m} \tilde{X}_i(\bar{a}_{2k_0j+2k_0}). \\ \nonumber
\end{eqnarray}
Similarly, for $j=1,\ldots, q$, define 
\begin{eqnarray*}
\bar{\vartheta}_{2k_0j}= \{\epsilon_{(2k_0j-1)m+1}, \ldots, \epsilon_{2k_0jm} \} \mbox{ and } 
\vartheta= \{\ldots, \bar{\vartheta}_0,\bar{\vartheta}_{2k_0},\bar{\vartheta}_{4k_0},\ldots  \}.
\end{eqnarray*}
Recall $A_j$ from (\ref{eq:define big block}). We have
\begin{eqnarray*}
A_{j+1}= F_{4j+1}(\bar{\vartheta}_{2k_0j})+F_{4j+2}+F_{4j+3}+F_{4j+4}(\bar{\vartheta}_{2k_0j+2k_0}).
\end{eqnarray*}
\noindent  Define the mean functions 
\begin{equation*} \Lambda_{4j+1}(\bar{a}_{2k_0j})=E^*(F_{4j+1}(\bar{a}_{2k_0j})) \text{ and } \Lambda_{4j+4}(\bar{a}_{2k_0j+2k_0})=E^*(F_{4j+4}(\bar{a}_{2k_0j+2k_0})),
\end{equation*}
where $E^*$ refers to the conditional moment given $a$. In the sequel, with slight abuse of notation, we will simply use the usual $E$ to denote moments of random variables conditioned on $a$. Introduce the centered process 
\begin{eqnarray}\label{eq:Demeaned process}
Y_j(\bar{a}_{2k_0j},\bar{a}_{2k_0j+2k_0})&=& F_{4j+1}(\bar{a}_{2k_0j})- \Lambda_{4j+1}(\bar{a}_{2k_0j})+F_{4j+2} \\ \nonumber
&&+F_{4j+3}+ F_{4j+4}(\bar{a}_{2k_0j+2k_0})- \Lambda_{4j+4}(\bar{a}_{2k_0j+2k_0}).
\end{eqnarray}
Following the definition of $S_n^{\diamond}$, we let 
\begin{equation*}
S_i(a)= \displaystyle\sum_{j=0}^{q_i-1} Y_j(\bar{a}_{2k_0j},\bar{a}_{2k_0j+2k_0}). 
\end{equation*}
The mean and variance function of $S_i(a)$ are respectively denoted by
\begin{eqnarray*}
M_i(a)&=& \displaystyle\sum_{j=0}^{q_i-1} [\Lambda_{4j+1}(\bar{a}_{2k_0j}) +\Lambda_{4j+4}(\bar{a}_{2k_0j+2k_0})], \\ \nonumber
Q_i(a) &=& \displaystyle\sum_{j=0}^{q_i-1} V_j(\bar{a}_{2k_0j},\bar{a}_{2k_0j+2k_0}), 
\end{eqnarray*}
where $V_j(\bar{a}_{2k_0j},\bar{a}_{2k_0j+2k_0})$ is the dispersion matrix of  $Y_j(\bar{a}_{2k_0j},\bar{a}_{2k_0j+2k_0})$. Define  
\begin{eqnarray}\label{eq:define Vj0}
V_{j0}(\bar{a}_{2 k_0j})&=& E(F_{4j-2}F_{4j-1}\tran + F_{4j-1}F_{4j-2}\tran)+Var(F_{4j-1}+ F_{4j}(\bar{a}_{2k_0j})- \Lambda_{4j}(\bar{a}_{2k_0j})) \cr
&& +Var(F_{4j+1}(\bar{a}_{2k_0j})- \Lambda_{4j+1}(\bar{a}_{2k_0j})+F_{4j+2}).
\end{eqnarray}
Note that, the following identity holds for all $t$:
\begin{eqnarray}\label{eq:identity of variance}
\displaystyle\sum_{j=0}^{t} V_j(\bar{a}_{2k_0j},\bar{a}_{2k_0j+2k_0}) = L(\bar{a}_{0})+\displaystyle\sum  _{j=1}^{t-1} V_{j0}(\bar{a}_{2k_0j})+U_t(\bar{a}_{2k_0t+2k_0}),
\end{eqnarray}
where $L(\bar{a}_{0})=Var(F_1(\bar{a}_{0})+F_2)$ and 
\begin{equation}\label{eq:Uj define}
U_{t-1}(\bar{a}_{2k_0t})= E(F_{4t-2}F_{4t-1}\tran + F_{4t-1}F_{4t-2}\tran)+Var(F_{4t-1}+ F_{4t}(\bar{a}_{2k_0t})- \Lambda_{4t}(\bar{a}_{2k_0t})).
\end{equation}
Define $$L_{\gamma}^{a}= \displaystyle\sum_{j=0}^{q-1} E(| Y_j(\bar{a}_{2k_0j},\bar{a}_{2k_0j+2k_0})|^{\gamma}).$$ 

\noindent In the sequel, we suppress $Y_j(\bar{a}_{2k_0j},\bar{a}_{2k_0j+2k_0})$, $Y_j(\bar{\vartheta}_{2k_0j},\bar{\vartheta}_{2k_0j+2k_0})$, $V_j(\bar{a}_{2k_0j}, \bar{a}_{2k_0j+2k_0})$, $V_{j0}(\bar{a}_{2k_0j})$, $V_j(\bar{\vartheta}_{2k_0j}, \bar{\vartheta}_{2k_0j+2k_0})$ and $V_{j0}(\bar{\vartheta}_{2k_0j})$ as just $Y_j^{a}$,$Y_j^{\vartheta}$, $V_j^{a}$, $V_{j0}^{a}$, $V_j^{\vartheta}$ and $V_{j0}^{\vartheta}$ respectively. We apply Proposition \ref{re:zaitsev result} to the independent mean zero random vectors $Y_j^a$. 

Proposition \ref{re:zaitsev result} concerns Gaussian approximation for independent vectors. There are several types of Gaussian approximations in literature for independent vectors. We find the following result by G{\"o}tze and Zaitsev (2008, \cite{MR2760567}) particularly useful since it provides an explicit and good approximation  bound  for  the  partial  sums. This has been used several times in our proof.
\begin{proposition}\label{re:zaitsev result}
Let $\xi_1,\ldots, \xi_n$ be independent $\mathbb{R}^d$-valued mean zero random vectors. Assume that there exist $s \in {\mathbb N}$ and a strictly increasing sequence of non-negative integers $\eta_0=0<\eta_1< \ldots < \eta_s=n$ satisfying the following conditions. Let
$$\zeta_k=\xi_{\eta_{k-1}+1}+\ldots +\xi_{\eta_k}, \quad Var (\zeta_k)= B_k, \quad k= 1,\ldots,s$$
and $L_{\gamma}= \sum_{j=1}^n E(| \xi_j |^{\gamma})$, $\gamma \ge 2$, and assume that, for all $k= 1,\ldots,s$, 
\begin{eqnarray}\label{eq:zaitsev condition 1}
C_1 w^2  \leq \rho_*(B_k) \leq \rho^*(B_k) \leq  C_2 w^2,
\end{eqnarray}
\noindent where $w=(L_{\gamma})^{1/\gamma}/\log^* s$, with some positive constants $C_1$ and $C_2$. Suppose the quantities $$\lambda_{k,\gamma}= \displaystyle\sum_{j=\eta_{k-1}+1}^{\eta_k} E \|\xi_j \|^{\gamma}, \hspace{0.1 in} k= 1, \ldots s, $$
\noindent satisfy, for some $0  <\epsilon <1$ and constant $C_3$,
\begin{eqnarray}\label{eq:zaitsev condition 2}
C_3 d^{\gamma/2}s^{\epsilon} (\log^*s)^{\gamma+3} \max_{1 \leq k \leq s} \lambda_{k,\gamma} \leq L_{\gamma}.
\end{eqnarray}
Then one can construct on a probability space independent random vectors $X_1,\ldots ,X_n$ and a corresponding set of independent Gaussian vectors $Y_1, \ldots ,Y_n$ so that $(X_j)_{j=1}^n \stackrel{D}{=}(\xi_j)_{j=1}^n$, $E(Y_j)=0$, $Var(Y_j)= Var(X_j), 1\le j \le n$, and for any $z>0$,
\begin{eqnarray*}\label{eq:zaitsev approximation}
P\left(\displaystyle\max_{t \leq n} |\displaystyle\sum_{i=1}^t X_i - \displaystyle\sum_{i=1}^t Y_i | \geq z\right) \leq C_*L_{\gamma}z^{-\gamma}.
\end{eqnarray*}
\noindent where $C_*$ is a constant that depends on $d,\gamma,C_1,C_2$ and $C_3$. 
\end{proposition}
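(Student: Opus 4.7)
The plan is to adapt the Komlós--Major--Tusnády (KMT) dyadic construction to the multivariate setting, which is the strategy underlying Götze--Zaitsev. First I would reduce to coupling a single terminal sum: once a Gaussian partner $T$ for the full $\sum_{k=1}^s\zeta_k$ is built on an enriched space together with the vectors $\zeta_1,\ldots,\zeta_s$, one can distribute the individual $Y_j$ within each block $\zeta_k$ by sampling them as a conditional Gaussian given $T$, and then control $\max_{t\le n}|\sum_{i\le t}X_i-\sum_{i\le t}Y_i|$ by a maximal inequality for the resulting mean-zero error process. The block structure and the covariance lower bound in (\ref{eq:zaitsev condition 1}) control the fluctuation of consecutive partial sums within a block so that this reduction only costs a constant.

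The heart of the construction is a recursive splitting on a binary tree of depth $\lfloor\log_2 s\rfloor+1$. At the root write $\sum_k\zeta_k=S^L+S^R$, the sums over the left and right halves of the blocks; the goal is to draw, jointly with a Gaussian $T$ of the correct covariance, a pair $(S^L,S^R)$ with the right marginal law and such that the conditional law of $(S^L-T/2)$ given $T$ is close to its Gaussian counterpart. One then recurses inside each half. The core technical input at every node is a quantitative conditional multivariate CLT: given $\zeta'+\zeta''$ for two independent vectors whose normalized covariance is comparable to the identity (ensured by (\ref{eq:zaitsev condition 1})), one couples $\zeta'$ to a Gaussian with error measured in a smooth (Prokhorov-type) metric by $c\,\lambda/w^\gamma$, where $\lambda$ is the local $\gamma$th-moment sum and $w$ is the scale in (\ref{eq:zaitsev condition 1}). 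Summing the step errors along the tree and applying condition (\ref{eq:zaitsev condition 2}) to guarantee that no single block $\lambda_{k,\gamma}$ dominates $L_\gamma$, a Markov argument yields $P(|\sum X_i-\sum Y_i|\ge z)\le C_* L_\gamma z^{-\gamma}$.

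The main obstacle will be proving the quantitative conditional CLT in $\mathbb{R}^d$. In one dimension KMT exploit the monotone quantile coupling between a sum and its Gaussian, which has no direct higher-dimensional analogue. The substitute I would use is a Fourier/smoothing estimate: control the difference of characteristic functions of a block sum and its Gaussian on an appropriate ball using a multivariate Berry--Esseen expansion, convert to a smooth-metric bound via Esseen's smoothing inequality, and then realise this bound as a genuine coupling by a transport construction. The eigenvalue sandwich in (\ref{eq:zaitsev condition 1}) is essential here because the characteristic-function estimate needs the covariance to be well conditioned, while the factor $d^{\gamma/2}(\log^* s)^{\gamma+3}$ in (\ref{eq:zaitsev condition 2}) is exactly what absorbs the losses from smoothing, from pushing a Berry--Esseen bound through $d$ coordinates, and from accumulating step errors over $O(\log s)$ dyadic levels.

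Everything else --- the maximal inequality used in the reduction, propagating nondegeneracy of covariances to children in the recursion (which follows from (\ref{eq:zaitsev condition 1}) since halving preserves spectral ratios up to a constant), and the final passage from a tail bound for the full sum to one for the running maximum via a standard Lévy-type argument for the coupled error process --- is bookkeeping once the conditional CLT above is in place.
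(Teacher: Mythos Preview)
The paper does not prove this proposition at all: it is quoted verbatim as a known result of G\"otze and Zaitsev (2008) and used as a black box in the conditional and unconditional Gaussian approximation steps. So there is no ``paper's proof'' against which to compare your sketch.

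That said, your outline is a recognisable summary of the Zaitsev/G\"otze--Zaitsev strategy itself --- the dyadic recursion on the block sums $\zeta_k$, with a conditional multivariate normal approximation at each node replacing the one-dimensional KMT quantile coupling, and the eigenvalue sandwich (\ref{eq:zaitsev condition 1}) ensuring well-conditioned covariances throughout the tree. Two cautions if you intend to flesh this out. First, your opening reduction (``build a Gaussian partner $T$ for the terminal sum, then distribute the $Y_j$ inside each block conditionally on $T$'') is not how the construction actually proceeds: the Gaussian hierarchy is built simultaneously with the coupling, level by level, precisely because the conditional laws at each node must inherit the right covariance structure; doing the terminal coupling first and then conditioning would not give independent $Y_j$ with the prescribed $Var(Y_j)=Var(X_j)$. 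Second, what you call ``bookkeeping'' --- converting the characteristic-function/Berry--Esseen bound into an actual coupling with the claimed tail, tracking constants through $O(\log s)$ levels so that the loss is exactly the factor $(\log^* s)^{\gamma+3}$ in (\ref{eq:zaitsev condition 2}), and the passage from the node-wise errors to a maximal inequality for the running error process --- is in fact the bulk of the Zaitsev papers and is far from routine. Your sketch identifies the right skeleton but substantially underestimates where the work lies.
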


We need to find a suitable sequence $\eta_k$ that allows us to get constants $C_1,C_2$ in (\ref{eq:zaitsev condition 1}) and $C_3$ in (\ref{eq:zaitsev condition 2}). There are roughly $q=n/(2k_0m)$ many $Y_j^{a}$ random variables. Define
\begin{eqnarray}\label{eq:choice of l}
l = \lfloor q^{2/\gamma}/\log ^2 q \rfloor.
\end{eqnarray}
To apply Proposition \ref{re:zaitsev result}, we choose the sequence $\eta_k= kl$ and $s \asymp q/l$. This choice is justified by proving the following series of propositions.

%\begin{proposition}\label{prop:Vj0a positive}
%With high probability, $V_j^{0a}$ is positive definite. In particular, given any $\epsilon>0$, %one can construct $A_{\epsilon}$ with $P(A_{\epsilon})>1-\epsilon$, such that for the event $a %\in A_\epsilon$, we have, for any unit vector $v$,
%$$\displaystyle \inf_{1\leq j \leq q} v\tranV_j^{0a}v>0.$$   
%\end{proposition}

\begin{proposition}\label{prop:Aj bounded}
Recall $\lambda_*$ and $A_j$ from (2.B) and (\ref{eq:define big block}) respectively. There exists a constant $\delta>0$ such that 
\begin{eqnarray*}
2(\lambda_*+\delta) k_0m \leq \rho_*(Var (A_j)) \leq \rho^*(Var (A_j)) \leq  \|A_j\|^2 \leq 2k_0m\Theta_{0,2}^2 .
\end{eqnarray*} 
\end{proposition}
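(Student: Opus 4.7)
The plan is to prove the four-inequality chain in two stages. The upper chain $\rho_*(\operatorname{Var}(A_j)) \le \rho^*(\operatorname{Var}(A_j)) \le \|A_j\|^2 \le 2k_0 m \Theta_{0,2}^2$ is essentially bookkeeping with the standard functional-dependence-measure $L_2$ bound, while the extreme left inequality---the strict positive lower bound on $\rho_*(\operatorname{Var}(A_j))$---is the substantive step that reduces to the original process via Condition~(2.B).

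For the upper chain, I would invoke the $L_2$ sum inequality $\|\sum_{i=1}^N Y_i\|_2 \le \sqrt{N}\sum_{k\ge 0}\delta^Y_{k,2}$ (the $p = 2$ specialization underlying Lemma~\ref{lem:rosenthal lemma}). Applied to the $2k_0 m$ summands making up $A_j$, and using $\tilde{\delta}_{j,2} \le \delta_{j,2}$ from (\ref{eq:fdm inequality}), this yields $\|A_j\|^2 \le 2 k_0 m \Theta_{0,2}^2$. The intermediate inequalities are immediate: $\rho_*\le\rho^*$ by definition, and $\rho^*(\operatorname{Var}(A_j)) \le \operatorname{tr}(\operatorname{Var}(A_j)) = E|A_j|^2 = \|A_j\|^2$ using $E\tilde X_i = 0$.

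For the lower bound, let $I_j$ denote the index block of length $2k_0 m$ defining $A_j$, and set $Z_j := \sum_{i \in I_j} X_i$. By Condition~(2.B) applied with $l = 2k_0 m \ge l_*$, $\rho_*(\operatorname{Var}(Z_j)) \ge 2\lambda_* k_0 m$. The key reduction is to show $\|A_j - Z_j\|_2 = o(\sqrt{k_0 m})$; for then, for any unit vector $u \in \mathbb{R}^d$, the $L_2$ triangle inequality gives
\[
\sqrt{u\tran \operatorname{Var}(A_j) u} \;=\; \|u\tran A_j\|_2 \;\ge\; \|u\tran Z_j\|_2 - \|A_j - Z_j\|_2 \;\ge\; (1 - o(1))\sqrt{2\lambda_* k_0 m},
\]
uniformly in $u$, and squaring produces a lower bound of the claimed form $2(\lambda_* + \delta)k_0 m$ for a suitable $\delta > 0$ (any strictly positive constant strictly below $\lambda_*$ works, with $n$ taken large enough to absorb the $o(1)$; equivalently, the stated $\delta$ reflects slack in (2.B)).

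The central obstacle is proving the comparison bound $\|A_j - Z_j\|_2 = o(\sqrt{k_0 m})$. Writing $b_n = t_n n^{1/p}$, I would decompose
\[
X_i - \tilde X_i \;=\; \bigl(X_i - T_{b_n}(X_i)\bigr) + \bigl(T_{b_n}(X_i) - E(T_{b_n}(X_i) \mid \epsilon_i,\ldots,\epsilon_{i-m})\bigr) + E T_{b_n}(X_i),
\]
and estimate the three partial sums separately. For the truncation residual $Y_i := X_i - T_{b_n}(X_i)$, the dependence measure satisfies $\delta^Y_{k,2} \le \min\{2\delta_{k,2},\;2\sup_i\|Y_i\|_2\}$; uniform integrability (2.A) drives the second term of the minimum to zero, and dominated convergence with the summable envelope $2\delta_{k,2}$ forces $\sum_k \delta^Y_{k,2} \to 0$, giving a partial sum of order $o(\sqrt{k_0 m})$. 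The $m$-dependence residual is handled identically, its cumulative dependence measure being controlled by $\Theta_{m+1,2}^{\oplus} \le \Theta_{m+1,2} \to 0$. For the deterministic shift, $|E T_{b_n}(X_i)| = |E(T_{b_n}(X_i) - X_i)|$ since $EX_i=0$, and a Cauchy--Schwarz split with Markov's inequality bounds it by $o(n^{-1/2})$ uniformly in $i$; summing over the $2k_0 m$ indices in $I_j$ yields $O(k_0 m/\sqrt n) = o(\sqrt{k_0 m})$, thanks to $k_0 m \ll n$ (since $m = \lfloor n^L t_n^k \rfloor$ with $L < 1$ in (\ref{eq:define m dup})). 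Combining the three pieces closes the comparison and completes the proposition.
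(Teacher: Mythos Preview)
Your proposal is correct and follows essentially the same route as the paper: both establish the upper chain via the standard $\sqrt{N}\,\Theta_{0,2}$ partial-sum bound, and both obtain the lower bound by comparing $A_j$ in $L_2$ to the untruncated block sum $\sum_{i\in I_j} X_i$ (to which (2.B) applies directly), showing the difference is $o(\sqrt{k_0 m})$ and concluding via the triangle/Cauchy--Schwarz inequality. Your three-piece decomposition of $X_i - \tilde X_i$ with the dominated-convergence argument for the truncation residual is slightly more explicit than the paper's compressed two-step version (truncation then $m$-dependence, the latter citing \cite{MR2660298}), but the substance is identical.
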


\begin{proposition}
\label{prop:lower bound on Var Yj'}
We can get positive constants $c_1$ and $c_2$ such that for all $j$,
\begin{eqnarray}\label{eq:prop 3.4 eq2}
c_1m \leq \rho_*(Var(Y_j^{\vartheta})) \leq \rho^*(Var(Y_j^{\vartheta})) \leq E(|Y_j^{\vartheta}|^2)\leq c_{2}m.
\end{eqnarray}

\end{proposition}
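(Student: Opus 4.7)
The plan is to prove the upper and lower eigenvalue bounds separately, with the upper bound following from standard $L^2$ estimates and the lower bound from a law-of-total-variance argument combined with Proposition \ref{prop:Aj bounded}.

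For the upper bound, I will apply the $L^2$ triangle inequality to the four-term decomposition (\ref{eq:Demeaned process}). Each interior block $F_{4j+2}$, $F_{4j+3}$ is a sum of $(k_0-1)m$ consecutive $\tilde X_i$'s, so the usual weak-dependence bound combined with (\ref{eq:fdm inequality}) gives $\|F_{4j+2}\|_2, \|F_{4j+3}\|_2 \le \sqrt{(k_0-1)m}\,\Theta_{0,2}$. For each boundary block the conditional-variance inequality yields $\|F_{4j+k}(\bar\vartheta) - \Lambda_{4j+k}(\bar\vartheta)\|_2 \le \|F_{4j+k}(\bar\vartheta)\|_2 \le \sqrt{m}\,\Theta_{0,2}$. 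Summing, $E|Y_j^\vartheta|^2 \le c_2 m$, and since $\rho^*(V_j^\vartheta) \le \operatorname{tr}(V_j^\vartheta) = E|Y_j^\vartheta|^2$, the upper chain in (\ref{eq:prop 3.4 eq2}) follows.

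For the lower bound, the key identity is $Y_j^\vartheta = A_{j+1} - E(A_{j+1}\mid\vartheta)$. With $k_0 \ge 2$, the $\epsilon$-indices driving $F_{4j+2}$ lie in $[2k_0 jm+1, (2k_0j+k_0)m]$ and those driving $F_{4j+3}$ in $[(2k_0j+k_0-1)m+1, (2k_0j+2k_0-1)m]$, both disjoint from every conditioning block $\bar\vartheta_{2k_0 j'}$. Hence $E(F_{4j+2}\mid\vartheta) = E(F_{4j+3}\mid\vartheta) = 0$, and $E(A_{j+1}\mid\vartheta) = \Lambda_{4j+1}(\bar\vartheta_{2k_0 j}) + \Lambda_{4j+4}(\bar\vartheta_{2k_0 j+2k_0})$. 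Orthogonality then gives $Var(A_{j+1}) = V_j^\vartheta + Var(E(A_{j+1}\mid\vartheta))$; since $\bar\vartheta_{2k_0 j}$ and $\bar\vartheta_{2k_0 j+2k_0}$ consist of disjoint, hence independent, innovations, the second term splits as $Var(\Lambda_{4j+1}) + Var(\Lambda_{4j+4})$, with each summand bounded in operator norm by $E|F_{4j+k}|^2 \le m\Theta_{0,2}^2$ via the conditional-variance inequality.

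Invoking Proposition \ref{prop:Aj bounded} together with the elementary fact that $\rho_*(M-N) \ge \rho_*(M) - \rho^*(N)$ whenever $M, N, M-N$ are all positive-semidefinite, I will conclude $\rho_*(V_j^\vartheta) \ge 2(\lambda_*+\delta) k_0 m - 2m\Theta_{0,2}^2$. The choice $k_0 = \lfloor \Theta_{0,2}^2/\lambda_* \rfloor + 2$ in (\ref{eq:define big block}) was calibrated precisely so that $(\lambda_*+\delta) k_0 - \Theta_{0,2}^2 \ge \lambda_* > 0$, whence $c_1 = 2\lambda_*$ works. The main obstacle is the careful bookkeeping of which $\epsilon$-indices drive each of the six sub-blocks, needed simultaneously to certify independence of the interior blocks from $\vartheta$ and of the two boundary $\bar\vartheta$'s from each other; once this is settled, the remaining algebra is a clean application of (2.B) through Proposition \ref{prop:Aj bounded}.
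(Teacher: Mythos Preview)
Your proposal is correct and follows essentially the same route as the paper: both arguments identify $Y_j^{\vartheta}=A_{j+1}-E(A_{j+1}\mid\vartheta)$, bound the subtracted conditional mean by $\|\Lambda_{4j+1}\|_2^2+\|\Lambda_{4j+4}\|_2^2\le 2m\Theta_{0,2}^2$, and then invoke Proposition~\ref{prop:Aj bounded} together with the calibration $k_0>\Theta_{0,2}^2/\lambda_*+1$ to extract a positive lower bound. Your write-up is in fact more explicit than the paper's, which carries out the scalar $E|\cdot|^2$ computation and then dispatches the eigenvalue statement (\ref{eq:prop 3.4 eq2}) with the phrase ``using similar arguments''; your matrix law-of-total-variance step and the inequality $\rho_*(M-N)\ge\rho_*(M)-\rho^*(N)$ fill in exactly what that phrase covers.
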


\begin{proposition}\label{prop:vja}
For $l$ in (\ref{eq:choice of l}), there exists constant $c_3$ such that,
$$P\left(\max_{1 \leq t \leq q/l}| Var\left(\sum_{j=(t-1)l}^{tl-1} Y_j^{a}\right)- E\left(Var\left(\sum_{j=(t-1)l}^{tl-1} Y_j^{a}\right)\right)| \ge c_3lm\right) \to 0.$$ 
\end{proposition}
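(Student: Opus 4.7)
The plan is to exploit identity~(\ref{eq:identity of variance}) to rewrite $\sum_{j=(t-1)l}^{tl-1} V_j^a$ as a sum of independent random matrices plus two boundary terms. Setting $T(s):=\sum_{j=0}^{s}V_j^a$ and telescoping $T(tl-1)-T((t-1)l-1)$ produces
\begin{equation*}
\sum_{j=(t-1)l}^{tl-1} V_j^a \;=\; \sum_{j=(t-1)l-1}^{tl-2} V_{j0}(\bar{a}_{2k_0 j}) \;+\; U_{tl-1}(\bar{a}_{2k_0 tl}) \;-\; U_{(t-1)l-1}(\bar{a}_{2k_0 (t-1)l}).
\end{equation*}
The crucial observation is that $V_{j0}(\bar{a}_{2k_0 j})$, as defined in (\ref{eq:define Vj0}), depends only on the single $m$-block $\bar{a}_{2k_0 j}$. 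Hence distinct $V_{j0}$'s are supported on disjoint coordinates of $a$, so the family $\{V_{j0}(\bar{a}_{2k_0 j})\}_j$ is mutually independent and the problem reduces to concentration of an independent sum of $l$ random matrices.

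My second step is a uniform moment bound $\sup_j E|V_{j0}(\bar{a}_{2k_0 j})|^{\gamma} \lesssim m^{\gamma}$, together with the analogous bound on $U_s$. Examining (\ref{eq:define Vj0}), each entry of $V_{j0}$ is a sum of products of components of the block sums $F_{4j-2}$, $F_{4j-1}$ and of the conditional block sums $F_{4j}(\bar a_{2k_0 j})-\Lambda_{4j}(\bar a_{2k_0 j})$ and $F_{4j+1}(\bar a_{2k_0 j})-\Lambda_{4j+1}(\bar a_{2k_0 j})$. Each such $F$ is a length-$m$ sum of truncated $m$-dependent variables; applying Lemma~\ref{lem:rosenthal lemma} to the conditional process, uniformly in the boundary conditioning, gives $\|F\|_{2\gamma}=O(m^{1/2})$. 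The Cauchy--Schwarz inequality then bounds each entry of $V_{j0}$ in $L^{\gamma}$ by $O(m)$, and summing over the $d\times d$ entries yields $E|V_{j0}|^{\gamma}\lesssim m^{\gamma}$.

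With this moment bound, Rosenthal's inequality applied entry-wise to the centered sum of $l$ independent matrices yields $E\big|\sum_{j=(t-1)l-1}^{tl-2}(V_{j0}(\bar{a}_{2k_0 j})-E V_{j0})\big|^{\gamma} \lesssim l^{\gamma/2} m^{\gamma}$. Markov's inequality and a union bound over the $\lceil q/l\rceil$ choices of $t$ then give
\begin{equation*}
P\!\left(\max_t \Big|\sum_{j=(t-1)l-1}^{tl-2}(V_{j0}(\bar{a}_{2k_0 j})-E V_{j0})\Big|\ge \tfrac{c_3 l m}{2}\right) \;\lesssim\; \frac{q/l\cdot l^{\gamma/2} m^{\gamma}}{(c_3 l m)^{\gamma}} \;=\; \frac{q}{c_3^{\gamma}\,l^{\gamma/2+1}},
\end{equation*}
which with $l=\lfloor q^{2/\gamma}/\log^{2} q\rfloor$ is of order $(\log q)^{\gamma+2}/q^{2/\gamma}\to 0$. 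After subtracting its mean, each boundary matrix $U_s$ satisfies the same $L^{\gamma}$ bound, so by Markov its maximum over $t$ is $O_P((q/l)^{1/\gamma} m)=o_P(lm)$, hence negligible relative to the threshold $c_3 lm$.

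The main obstacle will be step two: proving $E|V_{j0}(\bar{a}_{2k_0 j})|^{\gamma}\lesssim m^{\gamma}$ uniformly in $j$ and in the boundary conditioning $\bar{a}_{2k_0 j}$. Because $V_{j0}$ is a \emph{conditional} covariance matrix and the process is nonstationary, one must apply Rosenthal-type inequalities inside the conditional expectation, combining the truncation bound $|\tilde X_i|\le 2 t_n n^{1/p}$ with the uniform functional-dependence measure (\ref{eq:fdm}) to control the conditional dependence decay by the original $\delta_{j,r}$. This uniformity in $\bar{a}$ is the central technical difficulty that distinguishes the present step from the stationary analogue in Berkes, Liu and Wu (2014).
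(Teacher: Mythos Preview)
Your reduction via identity~(\ref{eq:identity of variance}) to an independent sum of $V_{j0}$'s plus boundary terms is correct and is essentially what the paper does (the paper instead breaks into odd and even $j$ to get independence, but the effect is the same). The genuine gap is your moment bound: the claim $\|F\|_{2\gamma}=O(m^{1/2})$ does \emph{not} follow from Lemma~\ref{lem:rosenthal lemma}. That lemma establishes only $\|\tilde R_{c,m}\|_\gamma=O(m^{1/2})$; its proof is calibrated to the specific exponent $\gamma$ via the system (\ref{eq:equation 1})--(\ref{eq:equation 3}). If you rerun the argument with exponent $2\gamma$, term~II becomes $m^{1/2}n^{1/p-1/(2\gamma)}t_n^{1-p/(2\gamma)}\sum_{j>m}\delta_{j,p}^{p/(2\gamma)}$; from (\ref{eq:equation 3}) one checks that the net exponent of $n$ in $m^{-1/2}II$ equals $(1/p+L)/2>0$, so $\|F\|_{2\gamma}/m^{1/2}\to\infty$. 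Hence $E|V_{j0}|^{\gamma}\lesssim m^{\gamma}$ fails, and your Rosenthal/Markov computation collapses. Falling back to the available $\gamma/2$-th moments $E|V_{j0}|^{\gamma/2}\lesssim m^{\gamma/2}$ does not save the argument either: the resulting union-bound estimate is $q\,l^{-1-\gamma/4}\asymp q^{1/2-2/\gamma}(\log q)^{\gamma/2+2}$, which diverges whenever $\gamma>4$.

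The paper circumvents this by \emph{not} using a crude moment bound on $V_j^a$. Instead it expands $V_j^a$ as in (\ref{eq:Vja}), controls the cross-covariance pieces via (\ref{eq:Cov random small}), and for the pure variance pieces applies Burkholder's inequality to the projection decomposition $\sum_j P_j\tilde S_m^2$, obtaining the sharp bound (\ref{eq:first term part 2}). This bound is smaller than the crude $m^{\gamma/2}$ by a factor coming from the dependence decay, and crucially uses $A>2\gamma/p$ (cf.\ (\ref{eq:1st term small})). The remaining piece is reduced to the large-deviation statement $(n/m)P(|\tilde S_m|\ge\sqrt{lm})\to0$, handled by a Nagaev-type inequality; this is precisely the step that also requires the assumed relations among $\chi$, $r$, $\gamma$, $L$. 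In short, the crude Rosenthal/moment route you propose is too blunt; the paper needs the finer projection/Nagaev analysis, and this is exactly where the condition $A>2\gamma/p$ enters.
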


\begin{proposition}\label{prop:Lgamma}
We can get constants $c_4$ and $c_5$ such that  

$$P(c_4q^{2/\gamma}m \leq (L_{\gamma}^{a} )^{2/\gamma} \leq c_5q^{2/\gamma}m) \to 1.$$ 
\end{proposition}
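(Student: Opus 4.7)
The plan is to establish two-sided bounds on $L_\gamma^a = \sum_{j=0}^{q-1} E^*|Y_j^a|^\gamma$ by first pinning down its unconditional mean $E L_\gamma^a = \sum_j E|Y_j^\vartheta|^\gamma$ to be of exact order $q m^{\gamma/2}$, and then showing that $L_\gamma^a$ concentrates around this mean in probability. The final statement will then follow by taking the $2/\gamma$-th power.

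First I would handle the \emph{upper bound on the mean}. Each $Y_j^\vartheta$ is a centered sum of $2k_0 m$ truncated (hence bounded by $2 t_n n^{1/p}$) mean-zero vectors $\tilde X_i$. These are $m$-dependent by construction, and their functional dependence measure $\tilde{\delta}_{i,\gamma}$ is controlled through (\ref{eq:fdm inequality}) together with the decay in (\ref{eq:form of thetaip}). Applying the Rosenthal-type inequality that underlies Lemma \ref{lem:rosenthal lemma} (with the $\gamma/2$-power being the Gaussian part), I obtain
\begin{equation*}
 E|Y_j^\vartheta|^\gamma \leq c\, m^{\gamma/2} + c\, m\, \|\tilde X_i\|_\gamma^\gamma,
\end{equation*}
and since $\|\tilde X_i\|_\gamma^\gamma \leq (2 t_n n^{1/p})^{\gamma-p} E|X_i|^p$, the second term is negligible compared to $m^{\gamma/2}$ under (\ref{eq:define m dup}) and (\ref{eq:tn part1}). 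Summing over $j$ yields $E L_\gamma^a \leq c_5' q m^{\gamma/2}$. For the \emph{lower bound on the mean}, Lyapunov's inequality gives $E|Y_j^\vartheta|^\gamma \geq (E|Y_j^\vartheta|^2)^{\gamma/2}$, and Proposition \ref{prop:lower bound on Var Yj'} implies $E|Y_j^\vartheta|^2 = \operatorname{tr}(\operatorname{Var}(Y_j^\vartheta)) \geq d\, \rho_*(\operatorname{Var}(Y_j^\vartheta)) \geq c_1 d\, m$, so $E L_\gamma^a \geq c_4' q m^{\gamma/2}$.

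Next I would handle the \emph{concentration} of $L_\gamma^a$ around $EL_\gamma^a$. The summand $\xi_j := E^*|Y_j^a|^\gamma$ is a measurable function of the two adjacent border blocks $\bar a_{2k_0 j}$ and $\bar a_{2k_0 j + 2k_0}$, so $(\xi_j)$ is a $1$-dependent sequence. Therefore
\begin{equation*}
 \operatorname{Var}(L_\gamma^a) \leq 3 \sum_{j=0}^{q-1} \operatorname{Var}(\xi_j) \leq 3 \sum_j E(\xi_j^2) \leq 3 \sum_j E |Y_j^\vartheta|^{2\gamma},
\end{equation*}
the last step by (conditional) Jensen's inequality. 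I apply Rosenthal once more at exponent $2\gamma$, using $\|\tilde X_i\|_{2\gamma}^{2\gamma} \leq (2 t_n n^{1/p})^{2\gamma - p} E|X_i|^p$, to obtain $E|Y_j^\vartheta|^{2\gamma} \leq c\, m^\gamma$ after absorbing the lower-order term via the constraints on $m$ and $t_n$. Consequently $\operatorname{Var}(L_\gamma^a) = O(q m^\gamma)$, and Chebyshev's inequality gives
\begin{equation*}
 P\bigl(|L_\gamma^a - E L_\gamma^a| \geq \epsilon\, q m^{\gamma/2}\bigr) = O(q^{-1}) \to 0.
\end{equation*}
Combining with Step 1, $c_4' q m^{\gamma/2}/2 \leq L_\gamma^a \leq 2 c_5' q m^{\gamma/2}$ with probability tending to $1$, and taking the $2/\gamma$-th power yields the claim with $c_4 = (c_4'/2)^{2/\gamma}$ and $c_5 = (2 c_5')^{2/\gamma}$.

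The main obstacle is the second Rosenthal-type bound, since we only assume $p$-th moments on the original $X_i$; the $2\gamma$-th moment of the truncated increments carries an unavoidable factor $(t_n n^{1/p})^{2\gamma - p}$. Showing that the resulting correction term $m \cdot (t_n n^{1/p})^{2\gamma-p}$ stays dominated by $m^\gamma$ requires carefully using the constraint $k < (\gamma-p)/(\gamma/2 - 1)$ in the definition (\ref{eq:define m dup}) of $m$, together with the slow decay of $t_n$ in (\ref{eq:tn part3 dup}); this is exactly the same parameter calibration that appears in the proof of the blocking approximation (Proposition \ref{prop:block}), so the choices of $\gamma$, $L$ and $k$ made there are already compatible.
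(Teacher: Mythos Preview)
Your treatment of the mean $E L_\gamma^a$ is fine and matches the paper: the upper bound comes from Lemma \ref{lem:rosenthal lemma}, and the lower bound from Proposition \ref{prop:lower bound on Var Yj'} via Lyapunov.

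The concentration step, however, has a real gap. You claim $E|Y_j^{\vartheta}|^{2\gamma}\le c\,m^{\gamma}$ by ``Rosenthal at exponent $2\gamma$'', writing the bound as a Gaussian part $m^{\gamma}$ plus an individual-moment part $m\|\tilde X_i\|_{2\gamma}^{2\gamma}$. But the Rosenthal-type inequality that underlies Lemma \ref{lem:rosenthal lemma} (from \cite{MR3114713}) has \emph{four} terms; besides $I$ and $IV$ it carries the dependence-measure terms
\[
II=m^{1/2}\sum_{j>m}\tilde\delta_{j,2\gamma},\qquad III=m^{1/(2\gamma)}\sum_{j\ge 1}j^{1/2-1/(2\gamma)}\tilde\delta_{j,2\gamma}.
\]
At exponent $\gamma$ the calibration (\ref{eq:equation 1})--(\ref{eq:equation 3}) is exactly what forces $II,III=O(m^{1/2})$; see (\ref{eq:third term simplify}). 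At exponent $2\gamma$ the analogue of (\ref{eq:equation 3}) reads
\[
\frac1p-\frac1{2\gamma}+L\Bigl(1-\frac{(\chi+1)p}{2\gamma}\Bigr)
=\underbrace{\Bigl[\frac1p-\frac1{\gamma}+L\Bigl(1-\frac{(\chi+1)p}{\gamma}\Bigr)\Bigr]}_{=0}+\frac{1+L(\chi+1)p}{2\gamma}>0,
\]
so term $III$ is \emph{not} $O(m^{1/2})$ and the conclusion $E|Y_j^{\vartheta}|^{2\gamma}=O(m^{\gamma})$ does not follow from the tools at hand. Your last paragraph addresses only the $IV$-type correction and misses this.

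The paper circumvents any moment beyond $\gamma$ by a truncation argument: with $J_j=(2k_0m)^{-\gamma/2}E^*|Y_j^a|^{\gamma}$ and $\theta=l^{\gamma/2}$, it uses only $E(T_\theta(J_j)^2)\le \theta\,E J_j=O(\theta)$ in Chebyshev, together with a tail bound $P(J_j>\theta)=o(q^{-1})$ obtained from the second-moment estimates (\ref{eq:1st term small})--(\ref{eq:vja-evja small 2}) already developed for Proposition \ref{prop:vja}. This needs nothing beyond the $\gamma$th moment of $Y_j$, which is why it works under the existing calibration.
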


\begin{proposition}\label{prop:choice of l}
Choose $\eta_k=kl$ with $l$ being defined in (\ref{eq:choice of l}). Then we can get $C_1$ and $C_2$ such that (\ref{eq:zaitsev condition 1}) is satisfied. Moreover, with $l$ in (\ref{eq:choice of l}), we can get $C_3$ such that (\ref{eq:zaitsev condition 2}) holds.
\end{proposition}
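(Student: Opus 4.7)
The plan is to verify the two hypotheses of Proposition \ref{re:zaitsev result} for the independent mean-zero vectors $\xi_j = Y_j^a$ with breakpoints $\eta_k = kl$ and block count $s = \lfloor q/l \rfloor \asymp q^{1-2/\gamma} (\log q)^2$, working on the intersection $\Omega_n$ of the high-probability events delivered by Propositions \ref{prop:lower bound on Var Yj'}, \ref{prop:vja}, and \ref{prop:Lgamma}; this intersection still has probability $1 - o(1)$. A preliminary observation I will use throughout is that, with $l$ as in (\ref{eq:choice of l}), $\log^\ast s \asymp \log q \asymp \log n$, since $l$ is polynomial in $q$ up to logarithmic factors.

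For the eigenvalue hypothesis (\ref{eq:zaitsev condition 1}), Proposition \ref{prop:Lgamma} gives $w^2 = (L_\gamma^a)^{2/\gamma}/(\log^\ast s)^2 \asymp q^{2/\gamma} m / (\log q)^2 \asymp lm$ by the very definition of $l$. The block covariance $B_k = \sum_{j=(k-1)l}^{kl-1} V_j^a$ has expectation (in the $a$-randomness) $E(B_k) = \sum_{j=(k-1)l}^{kl-1} \mathrm{Var}(Y_j^\vartheta)$, whose extreme eigenvalues lie in $[c_1 lm, c_2 lm]$ by summing the per-block bounds from Proposition \ref{prop:lower bound on Var Yj'}. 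Proposition \ref{prop:vja} yields, uniformly in $k$ and on $\Omega_n$, $|B_k - E(B_k)| \leq c_3 lm$, and the Frobenius norm controls the spectrum; hence
\begin{equation*}
(c_1 - c_3)\, lm \;\leq\; \rho_\ast(B_k) \;\leq\; \rho^\ast(B_k) \;\leq\; (c_2 + c_3)\, lm,
\end{equation*}
which combined with $w^2 \asymp lm$ gives (\ref{eq:zaitsev condition 1}) with constants $C_1, C_2$ depending only on $c_1, c_2, c_3$.

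For the moment condition (\ref{eq:zaitsev condition 2}), the strategy is to bound $E|Y_j^a|^\gamma$ by a Rosenthal-type inequality for the $m$-dependent truncated terms $\tilde X_i$, each of whose coordinates is bounded by $t_n n^{1/p}$. This gives $E|Y_j^a|^\gamma \lesssim m^{\gamma/2} + m\,(t_n n^{1/p})^\gamma$, and the second term is $o(m^{\gamma/2})$ under the constraint $0 < k < (\gamma-p)/(\gamma/2-1)$ on the $t_n$-exponent in (\ref{eq:define m dup}) together with the choice of $L$ in (\ref{eq:equation 2})--(\ref{eq:equation 3}). Summing over an $l$-block gives $\max_k \lambda_{k,\gamma} \lesssim l\, m^{\gamma/2}$, while Proposition \ref{prop:Lgamma} gives $L_\gamma^a \asymp q\, m^{\gamma/2}$. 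Thus (\ref{eq:zaitsev condition 2}) reduces to $d^{\gamma/2} s^\epsilon (\log s)^{\gamma+3} \cdot l/q \lesssim 1$, which is immediate for any $\epsilon \in (0,1)$ because $l/q \asymp s^{-1}$ up to polylogarithmic factors, and $s \to \infty$ polynomially.

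The main obstacle is reconciling the constants so that $C_1 = c_1 - c_3$ is strictly positive. The lower-bound constant $c_1$ in Proposition \ref{prop:lower bound on Var Yj'} is intrinsic and comes from $\lambda_\ast$ in Condition (2.B), whereas $c_3$ in Proposition \ref{prop:vja} arises from a Chebyshev/Markov-type argument whose second-moment estimate for $V_j^a - E V_j^a$ decays like $m^2/l$. A threshold of $\epsilon lm$ in Markov's inequality gives probability $O(l^{-1})$ per block, summable over the $s \asymp q/l$ blocks, so $\epsilon$ (and hence $c_3$) can be taken arbitrarily small while keeping the event of probability $1 - o(1)$. Choosing $c_3 < c_1/2$ secures (\ref{eq:zaitsev condition 1}) and completes the verification.
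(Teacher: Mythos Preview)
Your verification of the eigenvalue hypothesis (\ref{eq:zaitsev condition 1}) matches the paper's argument: combine Proposition \ref{prop:Lgamma} for $w^2 \asymp lm$, Proposition \ref{prop:lower bound on Var Yj'} for the unconditional mean $E(B_k)$, and Proposition \ref{prop:vja} for the fluctuation. Your remark that $c_3$ can be taken smaller than $c_1$ is correct, since the moment bounds in the proof of Proposition \ref{prop:vja} are actually $o((lm)^{\gamma/2})$, so any fixed threshold works.

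The verification of (\ref{eq:zaitsev condition 2}), however, has a genuine gap. You assert a deterministic-in-$a$ Rosenthal bound $E|Y_j^a|^\gamma \lesssim m^{\gamma/2} + m(t_n n^{1/p})^\gamma$, but the summands forming $Y_j^a$ are $m$-dependent, not independent, and the Rosenthal-type inequalities that handle this dependence (such as Lemma \ref{lem:rosenthal lemma}) rely on functional dependence measures and second moments that are only controlled \emph{after} averaging over $a$; there is no reason the $m^{\gamma/2}$ variance term should hold uniformly for every fixed $a$. More concretely, even granting your bound, the claim $m(t_n n^{1/p})^\gamma = o(m^{\gamma/2})$ is false: by (\ref{eq:equation 2}) the $n$-exponent of the ratio equals $L(1-\gamma/2)+\gamma/p = 1-\gamma(1/r-1/p)$, which is strictly positive once $r$ is close to $p$ (i.e.\ $\chi$ near $\chi_0$), while $t_n$ decays only like $(\log\log n)^{-1}$ and cannot compensate a positive power of $n$.

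The paper's route is different and avoids any uniform-in-$a$ bound: it repeats the concentration argument from the proof of Proposition \ref{prop:Lgamma} for each block sum $\lambda_{k,\gamma}$, showing that $\lambda_{k,\gamma}$ stays within $O(lm^{\gamma/2})$ of its unconditional mean $\sum_{j}E|Y_j^\vartheta|^\gamma = O(lm^{\gamma/2})$ (the latter by Lemma \ref{lem:rosenthal lemma}) with probability $1-o(1)$ uniformly over $k$. On the enlarged good event this gives $\max_k \lambda_{k,\gamma} = O(lm^{\gamma/2})$, after which (\ref{eq:zaitsev condition 2}) reduces to the elementary estimate $s^{\epsilon}(\log s)^{\gamma+3}\,l/q \to 0$ that you already noted.
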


\noindent Thus, we use Proposition \ref{re:zaitsev result} to construct $d$-variate mean zero normal random vectors $N^{a}_{j}$ and random vectors $E_{j}^{a}$ such that
\begin{equation*}
E_j^{a} \stackrel{D}{=} Y_j^a \mbox{ and } Var(N_{j}^{a})= Var(Y_{j}^a), \quad 0 \leq j \leq q-1, 
\end{equation*}
\begin{equation}\label{eq:finalapprox1}
 P_{a}\left( \displaystyle\max_{1 \leq i \leq n} | \Pi_i^{a}- D_i^{a} | \geq c_0z \right) \leq C { L_{\gamma}^{a} \over z^\gamma}, \mbox{ where }
  \Pi_i^{a}=\displaystyle\sum_{j=0}^{q_i-1} E_{j}^{a},\,\, 
D_i^{a}=\displaystyle\sum_{j=0}^{q_i-1} N_{j}^{a}
\end{equation}
and $C$ is a constant depending on $\gamma,c_1,\ldots,c_5$ and $C_3$. These constants are free of $a$. We can create a set ${\cal A}$ with $P({\cal A}) \to 1$ so that $a \in {\cal A}$ implies the statements in Proposition \ref{prop:Lgamma} and Proposition \ref{prop:vja} hold. Putting $z=n^{1 /r}$ above in (\ref{eq:finalapprox1}), by Lemma \ref{lem:rosenthal lemma} and the restriction (\ref{eq:equation 2}), we have, as $n \to \infty$,
\begin{eqnarray}\label{eq:proof of final approximation 1}
E(L_{\gamma}^{a}n^{-\gamma/r}) \leq  \frac{q}{n^{\gamma /r}}c_{\gamma} \max_cE( |\tilde{R}_{c,2k_0m}|^{\gamma})  
= O(n^{1-\gamma/r}m^{\gamma/2-1}) \rightarrow 0,
\end{eqnarray}
using
\begin{eqnarray*}
E(|Y_{j}(\bar{\vartheta}_{2k_0j},\bar{\vartheta}_{2k_0j+2k_0}) |^{\gamma}) \leq c_{\gamma} \max_c E(|\tilde{R}_{c,2k_0m}|^{\gamma}) = O(m^{\gamma/2}).
\end{eqnarray*}
Hence, conditioning on whether $a$ lies in ${\cal A}$ or not, from (\ref{eq:proof of final approximation 1}) we obtain,
\begin{eqnarray}\label{eq:proof of final approximation 3}
\displaystyle\max_{ i \leq n} | \Pi_i^{\vartheta}- D_i^{\vartheta} | = o_P(n^{1/r}).
\end{eqnarray}

\subsection{Unconditional Gaussian approximation and Regrouping:}
\label{sec:uncga}
Here we shall work with the processes $\Pi_i^{\vartheta}, \mu_i^{\vartheta}$ and $D_i^{\vartheta}$. Note that, $V_{j0}(\bar{a}_{2k_0j})$ defined in (\ref{eq:define Vj0}) is a function of $\vartheta$ and might not be positive definite in an uniform fashion. For a constant $0 < \delta_* < \lambda_*$, let
\begin{eqnarray}\label{eq:define Vj1}
V_{j1}(\bar{a}_{2k_0j})=
\begin{cases} 
V_{j0}(\bar{a}_{2k_0j}) &\mbox{if } \rho_*(V_{j0}^a) \ge \delta_*m,\\
(\delta_*m) I_d & \mbox{otherwise,}\end{cases}
\end{eqnarray}
\noindent which is a positive-definitized version of $V_{j0}(\bar{a}_{2k_0j})$. The following proposition shows that partial sums of $V_{j0}(\bar{a}_{2k_0j})$ and $V_{j1}(\bar{a}_{2k_0j})$ are close to each other. 

\begin{proposition}\label{prop:Vj final}
For some $\iota > 0$, we have
\begin{eqnarray*}
\max_{i \leq n}E \left(\left|\sum_{j=1}^{max(1,q_i-1) }(V_{j0}(\bar{a}_{2k_0j})-V_{j1}(\bar{a}_{2k_0j}))\right| \right)=o_P(n^{2/r-\iota}).
\end{eqnarray*}
\end{proposition}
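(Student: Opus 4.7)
The plan is to exploit that the positive-definitization $V_{j1}$ differs from $V_{j0}$ only on the rare event where $V_{j0}^\vartheta$ fails to have a uniformly bounded-away-from-zero smallest eigenvalue, and to show that the contribution of this event is subdominant. Let $B_j = \{\rho_*(V_{j0}^\vartheta) < \delta_* m\}$, so that $V_{j0}^\vartheta - V_{j1}^\vartheta$ is supported on $B_j$ by (\ref{eq:define Vj1}). A Rosenthal-type bound applied to the four $F$-blocks in (\ref{eq:define blocks}) gives $|V_{j0}^\vartheta| \le Cm$ uniformly in $\vartheta$, and consequently $|V_{j0}^\vartheta - V_{j1}^\vartheta| \le Cm$ pointwise. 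Combined with $q \asymp n/m$, this yields
\begin{equation*}
\max_{i \le n} E \Big|\sum_{j=1}^{\max(1, q_i-1)} (V_{j0}^\vartheta - V_{j1}^\vartheta)\Big| \le Cm \sum_{j=1}^{q-1} P(B_j) \le Cn \max_{1 \le j \le q-1} P(B_j).
\end{equation*}
Since $2/r < 1$ whenever $\chi > 0$, proving the proposition reduces to a genuine polynomial decay $\max_j P(B_j) = o(n^{2/r - 1 - \iota})$ for some $\iota > 0$.

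Next, I would establish a uniform lower bound on the mean: by Condition (2.B) on the original $X$-process applied to increments of length $\Theta(m)$, combined with the negligibility of the truncation and $m$-dependence approximations at this scale, there is a constant $c_1 > 0$ (independent of $j$ and $n$) with $\rho_*(E V_{j0}^\vartheta) \ge c_1 m$. Fix $\delta_* \in (0, c_1)$ once and for all. Then on $B_j$ Weyl's inequality forces
\begin{equation*}
|V_{j0}^\vartheta - E V_{j0}^\vartheta| \ge \rho_*(E V_{j0}^\vartheta) - \rho_*(V_{j0}^\vartheta) \ge (c_1 - \delta_*) m,
\end{equation*}
so Markov's inequality at order $s \ge 2$ gives $P(B_j) \le ((c_1 - \delta_*) m)^{-s}\, E|V_{j0}^\vartheta - E V_{j0}^\vartheta|^s$.

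The main obstacle is controlling the $s$-th moment $E|V_{j0}^\vartheta - E V_{j0}^\vartheta|^s$. The map $\bar a \mapsto V_{j0}(\bar a)$ is defined through the conditional-expectation operator $E^*$ in (\ref{eq:define Vj0}), so its dependence on the coordinates of $\bar\vartheta_{2k_0 j}$ is only implicit, via the conditioned blocks $F_{4j}(\bar a)$ and $F_{4j+1}(\bar a)$; unwinding this cleanly is the technical heart of the argument. I would view $V_{j0}(\bar\vartheta) - E V_{j0}$ as a martingale along the $m$ i.i.d.\ coordinates of $\bar\vartheta_{2k_0 j}$ and apply a Burkholder--Rosenthal inequality at order $s$; each martingale difference is bounded by the truncation level $2t_n n^{1/p}$ times a coefficient summable against the functional dependence measure $\tilde\delta_{\cdot, 2}$ (the condition $\gamma > 4\chi$ flagged in Section \ref{sec:poc} enters precisely to preserve this summability). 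Summing produces $E|V_{j0}^\vartheta - E V_{j0}^\vartheta|^s = O(m^{s/2}(\log n)^{c_s})$ and hence $P(B_j) \le C_s m^{-s/2}(\log n)^{c_s} \le C_s n^{-sL/2}(\log n)^{c_s}$, using $m \asymp n^L t_n^k$ from (\ref{eq:define m dup}).

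To finish, choose $\iota \in (0, 1 - 2/r)$ (nonempty since $\chi > 0$) and then $s$ large enough that $sL/2 > 1 - 2/r + \iota$; this is possible because $L > 0$ and because the boundedness of $\tilde X_i$ by $2t_n n^{1/p}$ makes all the invoked high moments finite. Substituting back,
\begin{equation*}
\max_{i \le n} E\Big|\sum_{j=1}^{\max(1, q_i-1)}(V_{j0}^\vartheta - V_{j1}^\vartheta)\Big| = O(n^{1 - sL/2}(\log n)^{c_s}) = o(n^{2/r - \iota}),
\end{equation*}
as required.
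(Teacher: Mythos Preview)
Your high-level strategy --- reduce to controlling the bad event $B_j=\{\rho_*(V_{j0}^\vartheta)<\delta_* m\}$ and then bound its probability by a moment inequality --- is the same as the paper's. But two of your intermediate claims do not hold, and the paper's route is genuinely different at the key moment-bound step.

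\emph{The pointwise bound is false.} A Rosenthal inequality gives $\|V_{j0}^\vartheta\|_{\gamma/2}=O(m)$ in moment, not $|V_{j0}^\vartheta|\le Cm$ uniformly in $\vartheta$. Conditional variances need not be bounded pointwise: already for $X_i=\epsilon_{i-1}\epsilon_i$ one has $Var(X_i\mid\epsilon_{i-1}=a)=a^2$. So the step $\sum_j E|V_{j0}^\vartheta-V_{j1}^\vartheta|\le Cm\sum_j P(B_j)$ is unjustified as written.

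\emph{Arbitrarily high moments are not free here.} Your core estimate $E|V_{j0}^\vartheta-EV_{j0}^\vartheta|^s=O(m^{s/2}(\log n)^{c_s})$ for every $s$ cannot be obtained by the Burkholder--Rosenthal scheme you sketch. Above order $p$ the truncation level enters: by Lemma~\ref{lem:truncated deltajy}, $\tilde\delta_{j,2s}\le 2\,n^{1/p-1/(2s)}t_n^{1-p/(2s)}\delta_{j,p}^{p/(2s)}$, so every moment inequality at order $s$ picks up powers of $n^{1/p}$ that are not absorbed by $m^{1/2}$. Letting $s\to\infty$ makes these diverge; you cannot simply ``choose $s$ large enough.'' This is exactly why the paper works at the \emph{fixed} exponent $\gamma$ determined by (\ref{eq:equation 1})--(\ref{eq:equation 3}).

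\emph{What the paper does instead.} Rather than bounding $V_{j0}^\vartheta-EV_{j0}^\vartheta$ as a whole, the paper decomposes $V_{j0}$ (see (\ref{eq:define Vj0})): the deterministic block $Var(F_{4j+2})$ already has $\rho_*\ge(k_0-1)\lambda_* m$, the random conditional-variance pieces are positive semidefinite and can only raise $\rho_*$, and the deterministic cross terms are $o(m)$. Hence the \emph{only} way $\rho_*(V_{j0}^\vartheta)$ can drop below $\delta_* m$ is for the random cross-covariances $E^*(F_{4j+1}(\bar\vartheta)F_{4j+2}^{\mathsf T})$ (and the analogous one for $F_{4j}$) to be of size $\ge\theta m/2$ with $\theta=(k_0-1)\lambda_*-\delta_*$. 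For these, a projection argument (see (\ref{eq:Cov random small})) together with the decay (\ref{eq:form of thetaip}) gives
\[
E\bigl|E^*(F_1(\bar\vartheta)F_{21}^{\mathsf T})\bigr|^\gamma=O\bigl(m^{\max(\gamma/2,\,\gamma-\chi\gamma)}\bigr),
\]
so Markov at order $\gamma$ yields $P(B_j)=O(m^{-\min(\gamma/2,\chi\gamma)})$. The condition $\gamma>\max(2,4\chi)$ --- which the solved $\gamma$ does satisfy for $\chi\le\chi_0$ --- is precisely what makes $n\max_j P(B_j)=o(n^{2/r-\iota})$. Your mention of $\gamma>4\chi$ is on target, but it enters through this cross-covariance estimate, not through summability of $\tilde\delta_{\cdot,2}$ in a generic martingale bound.
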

\noindent Henceforth in the sequel we will slightly abuse $max(1,q_i-1)=max(1,\lfloor i/(2k_0m) \rfloor-1)$ and simply use $q_i-1=\lfloor i/(2k_0m) \rfloor-1$ for presentational clarity.  
\begin{proof} of Proposition \ref{prop:Vj final}.
Recall (\ref{eq:define blocks}) for the definition of $F_{4j+1}(.), F_{4j+2}$ etc. Define 
\begin{equation*}
F_{21}= \displaystyle\sum_{i=m+1}^{2m} \tilde{X}_i.
\end{equation*}
Define the projection operator $P_i$ by
\begin{eqnarray*}
P_i Y = E(Y |\mathcal{F}_i ) - E(Y |\mathcal{F}_{ i-1} ), \quad Y \in \mathcal{L}_1 .
\end{eqnarray*}
\noindent For $1 \le j \le m$, $\|P_jF_{21}\| \le \sum_{i=m+1-j}^{m}\delta_{i,2}$. Since $\| E(F_{21}\tran | {\cal F}_m) \|^2 = \sum_{j=1}^m \|P_j F_{21}\|^2$, we have
\begin{eqnarray}\label{eq:Cov random small}
|E(F_1(\bar{a}_0)F_2\tran)| &=& |E(F_1(\bar{a}_0)F_{21}\tran)|=
 |E(F_1(\bar{a}_0) E(F_{21}\tran | {\cal F}_m) )| \cr
& \le & \|F_1(\bar{a}_0)\| (\sum_{j=1}^m (\sum_{i=m+1-j}^{m}\delta_{i,2})^2)^{1/2}.
\end{eqnarray}
Under the decay condition on $\Theta_{i,p}$ in (\ref{eq:form of thetaip}), we have 
$$E(|E(F_{1}(\bar{a}_0)F_{21}\tran)|^{\gamma}) =O(m^{\max(\gamma/2,\gamma-\chi\gamma)}). $$
We expand the last term of $V_{j0}(\bar{a}_{2k_0j})$ (see (\ref{eq:define Vj0})). Also note that, 
$$|E(F_{4j-2}F_{4j-1}\tran)+E(F_{4j-1}F_{4j-2}\tran)| \ll m \text{ and } \rho_*(Var(F_{4j+2})) \ge (k_0-1)\lambda_*m.$$ \noindent Then Proposition \ref{prop:Vj final} follows from the fact that our solution of $\gamma$ from (\ref{eq:equation 1}), (\ref{eq:equation 2}), and (\ref{eq:equation 3}) satisfy $\gamma>\max(2,4\chi)$ for $\chi \leq \chi_0$ and
\begin{eqnarray*} 
n \max_j P \left(\rho_*(V_{j0}^a)< \delta_*m\right) &\le& 2n \max_j P(|E(F_{4j+1}(\bar{a}_{2k_0j})F_{4j+2}\tran)|\geq -\theta m/2) \\ 
&= & O(n) \frac{m^{\max(\gamma/2,\gamma-\chi \gamma)}}{m^{\gamma}} = o(n^{2/r-\iota}),
\end{eqnarray*}
for some $\iota>0$ since we can choose $\delta_*$ such that $\theta = (k_0-1)\lambda_* - \delta_* > 0$.
\end{proof}
\noindent Recall (\ref{eq:Uj define}) for the definition of $U_j$. By Lemma \ref{lem:rosenthal lemma} and  Jensen's inequality, we obtain $\max_j \| U_{j}(\bar{\vartheta}_{2k_0j+2k_0}) \|_{\gamma / 2} = O(m^{1/2})$. By (\ref{eq:equation 2}), $\phi_n := q^{1/\gamma} m^{1/2} n^{-1/r} \to 0$. Then
\begin{eqnarray*}
P \left( \displaystyle\max_{0 \leq j \leq q-1}|U_{j}(\bar{\vartheta}_{2k_0j+2k_0})| \geq \phi_n n^{2/r} \right) 
&\leq& \sum_{j=0}^{q-1} P \left( |U_{j}(\bar{\vartheta}_{2k_0j+2k_0})| \geq \phi_n n^{2/r} \right) \\ 
&=& O(\phi_n^{-\gamma/2} n^{1-\gamma/r} m^{\gamma/2-1}) = O(\phi_n^{\gamma/2}) \to 0. 
\end{eqnarray*}
Similarly, $|L(\bar{\vartheta}_0)|=o_P(n^{2/r})$. Thus, by (\ref{eq:identity of variance}) and Proposition \ref{prop:Vj final}, since $Var(Y_j^a) = Var(N_j^a)$, one can construct i.i.d. $N(0, I_d)$ normal vectors $Z_{l}^a, l \in \mathbb{Z}$, such that 
\begin{equation*}
\max_{i \leq n} |D_i^{\vartheta}-\varsigma_i(\vartheta)|=o_P(n^{1/r}),
 \mbox{ where } \varsigma_i(a)= \displaystyle\sum_{j=1}^{q_i-1}V_{j1}^{0}(\bar{a}_{2k_0j})^{1/2}Z_{j}^{a}.  
\end{equation*}
\noindent By (\ref{eq:proof of final approximation 3}), we have $$ \displaystyle\max_{i \leq n}|\Pi_i^{\vartheta}-\varsigma_i(\vartheta)|= o_P(n^{1/r}).$$
\noindent Let $Z_{l}^{*}, l \in \mathbb{Z}$, independent of $(\epsilon_j)_{j \in \mathbb{Z}}$, be i.i.d.  $N(0, I_d)$ and define 
$$\Psi_i= \displaystyle\sum_{j=1}^{q_i-1} V_{j1}(\bar{\vartheta}_{2k_0j})^{1/2} Z_{j}\noindent ^*.$$
\noindent From the distributional equality, 
\begin{eqnarray}\label{eq:distribution equality}
(\Pi_i^{\vartheta}+M_i(\vartheta))_{1\le i \le n} \stackrel{D}{=} (S_i^{\diamond})_{1 \le i \le n},
\end{eqnarray}
\noindent we need to prove Gaussian approximation for the process $\Psi_i + M_i(\vartheta).$ Define
\begin{eqnarray*}
B_j= V_{j1}(\bar{\vartheta}_{2k_0j})^{1/2} Z_{j}^*+ \Lambda_{4j}(\bar{\vartheta}_{2k_0j})+\Lambda_{4j+1}(\bar{\vartheta}_{2k_0j}),
\end{eqnarray*}
\noindent which are independent random vectors for $j=1,\ldots ,q$ and let 
\begin{eqnarray*}
S_i^{\sharp}= \displaystyle\sum_{j=1}^{q_i-1}B_j \mbox{ and } W_i^{\sharp}= \Psi_i+M_i(\vartheta)- S_i^{\sharp}.
\end{eqnarray*}
Note that,
\begin{eqnarray}\label{eq:last term}
\max_{i \leq n} |W_i^{\sharp}|=\max_{i \leq n}|\Lambda_{4q_i}(\vartheta_{2k_0q_i})+\Lambda_{1}(\vartheta_{0})|=o_P(n^{1/r}).
\end{eqnarray}
Conditions (\ref{eq:zaitsev condition 1}) and (\ref{eq:zaitsev condition 2}) can be verified easily with this unconditional process $(S)_i^{\sharp}$ to use the Proposition \ref{re:zaitsev result}. Thus, there exists $B_j^{new}$ and Gaussian random variable $B_j^{gau}$, such that $(B_j^{new})_{j \leq q-1} \stackrel{D}{=}(B_j)_{j \leq q-1} $ and corresponding $B_j^{gau} \sim N  ( 0, Var ( B_j))$, such that
\begin{eqnarray}\label{ eq:main theorem final 1}
\max_{i \leq n} |\sum_{j=1}^{\lfloor i/2k_0m \rfloor-1} B_j^{new}- \sum_{j=1}^{\lfloor i/2k_0m \rfloor-1} B_j ^ {gau} | &=& o_P(n^{1/r}). 
\end{eqnarray}
By (\ref{eq:Approximation error 3}), (\ref{eq:distribution equality}), (\ref{eq:last term}) and (\ref{ eq:main theorem final 1}), we can construct a process $S_i^c$ and $B_j^c$ such that $(S_i^c)_{i \leq n} \stackrel{D}{=} (S_i)_{i \leq n}$ and $(B_j^c)_{j \leq q-1} \stackrel{D}{=} (B_j^{gau})_{j \leq q-1}$ and 
\begin{eqnarray}\label{ eq:main theorem final 3}
\max_{i \leq n} |S_i^c - \sum_{j=1}^{\lfloor i/(2k_0m) \rfloor-1} B_j^c | &=& o_P(n^{1/r}). 
\end{eqnarray}
Relabel this final Gaussian process as 
\begin{eqnarray*}\label{eq:final approximation}
G_i^c= \sum_{j=1}^{ \lfloor i/2k_0m \rfloor-1} (Var (B_j))^{1/2}Y_j^c,
\end{eqnarray*}
where $Y_j^c$ are i.i.d. $N(0, I_d)$. This concludes the proof of Theorem \ref{th:main theorem}. \qed

\begin{proof} of Proposition \ref{prop:Aj bounded}. Without loss of generality, we prove it for $j=1$. Note that
\begin{eqnarray}\label{eq:mdep}
2k_0m \lambda_* \leq \rho_*(Var (S_{2k_0m})) \leq \rho^*(Var (S_{2k_0m}))  \leq  \|\sum_{i=1}^{2k_0m}X_i \|^2  \leq 2k_0m\Theta_{0,2}^2.
\end{eqnarray}

\noindent Recall $X_i^{\oplus}$ and $\tilde{X}_i$ from (\ref{eq:partial sum of truncated}) and (\ref{eq:partial sum}). The same upper bound works for $S_i^{\oplus}$ and $\tilde{S}_i$. Note that, $\|S_{2k_0m}^{\oplus}-S_{2k_0m}\|=o(m)$ and from \cite{MR2660298}, we have  $$\| A_{1}  - S_{2k_0m}^{\oplus}  \|= O(\sqrt{2k_0m} \Theta_{m,2} )=o(\sqrt{2k_0m}).$$ 
This concludes the proof using the Cauchy-Schwartz inequality.
\end{proof}

\begin{proof} of Proposition \ref{prop:lower bound on Var Yj'}.
As $A_j$ is the block sum of the $m$-dependent processes with length $2k_0m$, we have, using (\ref{eq:mdep}), for all $j$,  $$2k_0m (\lambda_*+\delta) \leq E(|A_j|^2) \leq 2k_0m \Theta_{0,2}^2,$$

\noindent for some small $\delta>0$. We conclude the proof by using 
\begin{equation*}
|E(|Y_j^{\vartheta}|^2)-E(|A_{j+1}|^2)|=|\Lambda_{4j+1}(\bar{\vartheta}_{2k_0j})|^2+|\Lambda_{4j+4}(\bar{\vartheta}_{2k_0j+2k_0})|^2  \leq 2m \Theta_{0,2}^2 
\end{equation*}
and $ k_0> \Theta_{0,2}^2/\lambda_* + 1$. Using similar arguments, (\ref{eq:prop 3.4 eq2}) follows. 
\end{proof}

\begin{proof} of Proposition \ref{prop:vja}.
Note that, without loss of generality, we can assume $V_j^a$ to be independent for different $j$ since otherwise we can always break the probability statement in even and odd blocks and prove the statement separately. We use Corollary 1.6 and Corollary 1.7 from Nagaev (1979, \cite{MR542129}) respectively for the case $\gamma<4$ and $\gamma \geq 4$ on $|V_j^a-E(V_j^a)|$ to deduce that it suffices to show the following
\begin{eqnarray}\label{eq:vja-evja small}
q\max_{1\leq t \leq q/l} \max_{t(l-1)+1\leq j\leq tl}P(|V_j^a-E(V_j^a)|\geq lm) \to 0.
\end{eqnarray}
We expand and write $V_j^a$ as follows:
\begin{eqnarray}\label{eq:Vja}
V_j^a&=& Var(F_{4j+1}(\bar{a}_{2k_0j})- \Lambda_{4j+1}(\bar{a}_{2k_0j})) +Var(F_{4j+2}+F_{4j+3}) \\
\nonumber &+&E((F_{4j+1}(\bar{a}_{2k_0j})- \Lambda_{4j+1}(\bar{a}_{2k_0j}))F_{4j+2}\tran)+ E(F_{4j+2}(F_{4j+1}(\bar{a}_{2k_0j})- \Lambda_{4j+1}(\bar{a}_{2k_0j}))\tran) \\  \nonumber &+& E(F_{4j+3}(F_{4j+4}(\bar{a}_{2k_0j+2k_0})- \Lambda_{4j+4}(\bar{a}_{2k_0j+2k_0}))\tran) \\\nonumber
&+&E((F_{4j+4}(\bar{a}_{2k_0j+2k_0})- \Lambda_{4j+4}(\bar{a}_{2k_0j+2k_0}))F_{4j+3}\tran) \\ \nonumber &+&Var(F_{4j+4}(\bar{a}_{2k_0j+2k_0})- \Lambda_{4j+4}(\bar{a}_{2k_0j+2k_0})).
\end{eqnarray}
Using derivation similar to (\ref{eq:Cov random small}), it suffices to show (\ref{eq:vja-evja small}) for only the first and last term in (\ref{eq:Vja}). Moreover, we assume $d=1$ and $j=1$ to simplify notations. The proofs and the theorems used can be easily extended to vector-valued processes. Denote by $\tilde{S}_{m,\{j\}}$ for the sum $\tilde{S}_m$ with $\epsilon_j$ replaced by an i.i.d. copy $\epsilon_j'$. For the first term, by Burkholder's inequality, 
\begin{eqnarray*}
E(|Var(F_1(\bar{a}_0))-E(Var(F_1(\bar{a}_0)))|^{\gamma/2} )
&=& E(|E(\tilde{S}_m^2|a_{1-m},\ldots,a_0)-E(\tilde{S}_m^2)|^{\gamma/2}) \\
&=& \|\sum_{j=-m}^{0} P_j \tilde{S}_m^2\|_{\gamma/2}^{\gamma/2}
\leq c_{\gamma} (\sum_{j=-m}^{0} \|P_j \tilde{S}_m^2\|_{\gamma/2}^2)^{\gamma/4}
\end{eqnarray*}
For $-m \le j \le 0$, $\|P_j \tilde{S}_m^2\|_{\gamma/2} \le \|\tilde{S}_m^2 - \tilde{S}_{m,\{j\}}^2 \|_{\gamma/2} \le \|\tilde{S}_m - \tilde{S}_{m,\{j\}}\|_\gamma \|\tilde{S}_m + \tilde{S}_{m,\{j\}}\|_\gamma$. Note that $\|\tilde{S}_m \|_\gamma = O(m^{1/2})$ and $\|\tilde{S}_m - \tilde{S}_{m,\{j\}}\|_\gamma \le \sum_{r=1}^{m} \tilde{\delta}_{r-j,\gamma}$. By Lemma \ref{lem:truncated deltajy}, $\tilde{\delta}_{k,\gamma} \leq 2 n^{1/p-1/\gamma}t_n^{1-p/\gamma} \delta_{k,p}^{p/\gamma}.$ Then since $3 > 2(\chi+1)p/\gamma$ for $\chi \leq \chi_0$, we have
\begin{eqnarray}\label{eq:first term part 2}
\sum_{j=-m}^{0} \|P_j \tilde{S}_m^2\|_{\gamma/2}^2 
&=& O(m) \sum_{j=-m}^{0} \sum_{r=1}^{m} (\tilde{\delta}_{r-j,\gamma})^2 \\  \nonumber 
&=& O(m) n^{2/p-2/\gamma}t_n^{2-2p/\gamma} \sum_{j=0}^{m} (\sum_{r=1}^{m} \delta_{r+j,p}^{p/\gamma})^2 \\ \nonumber
&=&O(m)n^{2/p-2/\gamma}t_n^{2-2p/\gamma}m^{3-2(\chi+1)p/\gamma}(\log m)^{-2Ap/\gamma},
\end{eqnarray}
by (\ref{eq:form of thetaip}) and the H\"older inequality. Then, since $A>2\gamma/p$ and $\log m \asymp \log q \asymp \log n$, 
\begin{eqnarray}\label{eq:1st term small}
&&qE(|Var(F_1(\bar{a}_0))-E(Var(F_1(\bar{a}_0)))|^{\gamma/2} ) \\ \nonumber &\lesssim& qm^{\gamma-(\chi+1)p/2}n^{\gamma/2p-1/2}t_n^{\gamma/2-p/2} (\log n)^{-Ap/2}= o((lm)^{\gamma/2}),
\end{eqnarray}
using (\ref{eq:tn part3 dup}), (\ref{eq:equation 3}) and the choice of $l$ in (\ref{eq:choice of l}). For the last term in (\ref{eq:Vja}), we view $E(F_4(\bar{a}_{2k_0})^2)$ as 
$$E(F_4(\bar{a}_{2k_0})^2)=E((\tilde{S}_{2k_0m}-\tilde{S}_{(2k_0-1)m})^2|a_{(2k_0-1)m+1},\ldots a_{2k_0m})$$ \noindent and show that it is close to $(\tilde{S}_{2k_0m}-\tilde{S}_{(2k_0-1)m})^2$. Let $\mathcal{F}_j^m = (\epsilon_j,\ldots,\epsilon_m)$. Note that,
\begin{eqnarray}\label{eq:vj 4th term diff small}
\hspace{-0.5 in}\|\tilde{S}_m^2-E(\tilde{S}_m^2|a_m,\ldots,a_1)\|_{\gamma/2}^{\gamma/2}& \lesssim & (\sum_{j=-m-1}^{0} \|E(\tilde{S}_m^2|\mathcal{F}_j^m)-E(\tilde{S}_m^2|\mathcal{F}_{j+1}^m) \|_{\gamma/2}^2 )^{\gamma/4} \\ \nonumber 
&\leq& c m^{\gamma-(\chi+1)p/2}n^{\gamma/2p-1/2}t_n^{\gamma/2-p/2}(\log m)^{-Ap/2} \\ \nonumber &=& o(q^{-1}(lm)^{\gamma/2}),
\end{eqnarray}
similar to the derivation in (\ref{eq:first term part 2}). By (\ref{eq:1st term small}) and (\ref{eq:vj 4th term diff small}), it suffices to show that 
\begin{eqnarray}\label{eq:vja-evja small 2}
\frac{n}{m} P(|\tilde{S}_m|\ge \sqrt{lm}) \to 0.
\end{eqnarray}
Using the Nagaev-type inequality from Wu and Wu  (2016, \cite{MR3466186}) we obtain
\begin{eqnarray}\label{eq:nagaev 22}
\hspace{-0.3 in}P(|\tilde{S}_m | \geq \sqrt{lm}) &\leq& C_1 \frac{m^{\max\{1,p(1/2-\chi)\} }}{(lm)^{p/2}}+C_2\exp(-C_3l),
\end{eqnarray}

\noindent where $C_1,C_2$ and $C_3$ depend on $\chi$ and $p$. The second term in (\ref{eq:nagaev 22}) is $o(m/n)$ since $e^{-l} \to 0$ very fast.  For the first term in (\ref{eq:nagaev 22}), if $\chi < 1/2- 1/p$, then
\begin{eqnarray*}
\frac{n}{m}\frac{m^{p(1/2-\chi)}}{(lm)^{p/2}}= (\log n)^{p}n^{1-p/\gamma+L(p/\gamma-p\chi-1)} t_n^{k(p/\gamma-p\chi-1)}=o(1) , 
\end{eqnarray*}
as from (\ref{eq:equation 3}) we have $1-p/\gamma+L(p/\gamma-p\chi-1)  =L(p/\gamma-1)(\chi p+p+1)<0.$ If $1/2-1/p \leq \chi < \chi_0$ and consequently $r<p$, then we have, for the first term in (\ref{eq:nagaev 22}),
\begin{eqnarray}\label{eq:nagaev difficult case}
\frac{n}{m}\frac{m}{(lm)^{p/2}}= (\log n)^{p} n^{p(1/p-1/\gamma+L(1/\gamma- 1/2))}t_n^{k(p/\gamma-p/2)}=o(1),
\end{eqnarray}
using (\ref{eq:tn part3 dup}), $r<p$ and the fact that $r$ satisfy $1/r-1/\gamma+L(1/\gamma-1/2)=0.$
\end{proof}

\begin{proof} of Proposition \ref{prop:Lgamma}. 
By Lemma \ref{lem:rosenthal lemma}, $E(L_{\gamma}^a) \asymp qm^{\gamma/2}$. Then it suffices to prove 
\begin{eqnarray}
 \label{eq:Lgamma1}
P(|L_{\gamma}^a-E(L_{\gamma}^a)|\geq c q m^{\gamma/2}/\log q) \to 0,
\end{eqnarray}
holds for some constant $c > 0$. Note that $E(|Y_j^a|^{\gamma})$ are even indices $j$ (also for odd indices $j$). Thus we can prove the statement separately by breaking $L_{\gamma}^a$ in sum of even and odd $E(|Y_j^{a}|^{\gamma})$. Without loss of generality, we assume all $E(|Y_{j}^a|^{\gamma})$ are independent and proceed. Define $J_j=(2k_0m)^{-\gamma/2}E(|\tilde{S}_{2k_0mj}-\tilde{S}_{2k_0m(j-1)}|^{\gamma}|\bar{a}_{2k_0(j-1)}, \bar{a}_{2k_0j})$ and $\theta=l^{\gamma/2}=q/(\log q)^{\gamma}$. Recall the truncation operator $T$ from (\ref{eq:Truncation operator}). Noting $E(J_j)=O(1)$ from Lemma \ref{lem:rosenthal lemma}, we have 
\begin{eqnarray*}
P(|\sum_{j=1}^q T_{\theta}(J_j)-E(T_{\theta}(J_j)) |\geq \phi) \leq { q \over \phi^2} \max_j E(T_{\theta}(J_j)^2) = O(\theta q/\phi^2) =o(1),
\end{eqnarray*}
where $\phi = q/\log q$, and  $$ \max_j P(J_j\geq \theta) \leq \max_j P(E(|\tilde{S}_{2k_0mj}-\tilde{S}_{2k_0m(j-1)}|^{2}|\bar{a}_{2k_0(j-1)}, \bar{a}_{2k_0j}) \geq 2k_0lm)=o(q^{-1}),$$ \noindent from (\ref{eq:1st term small}), (\ref{eq:vj 4th term diff small}) and (\ref{eq:vja-evja small 2}). Thus $P(|\sum_{j=1}^{q}J_j - \sum_{j=1}^{q}E(J_j)| \geq \phi) \to 0$ which is a restatement of (\ref{eq:Lgamma1}).
\end{proof}

\begin{proof} of Proposition \ref{prop:choice of l}. We showed in Proposition \ref{prop:Lgamma} that $$P(cqm^{\gamma/2} \leq L_{\gamma} \leq Cqm^{\gamma/2})\to 1,$$
for some constants $c$ and $C$. Let $l$ be as given in (\ref{eq:choice of l}). Let $S=\{0,l,2l,\cdots \}$. Proposition \ref{prop:lower bound on Var Yj'} and Proposition \ref{prop:vja} show that, for some constants $c$ and $C$,
\begin{eqnarray*}
P( c lk_0m \leq \min_{i \in S}\rho_*\left( Var \left(\displaystyle \sum_{j=i}^{i+l-1}Y_j^{a}\right)\right) \leq \max_{i \in S}\rho^* \left( Var \left(\displaystyle \sum_{j=i}^{i+l-1}Y_j^{a}\right)\right) \leq  C  lk_0m ) \to 1.
\end{eqnarray*}
We choose $\eta_k=k l $ and $s \asymp q/l$. Starting with the conditional block sum process $Y_j^{a}$ for $0 \leq j \leq q-1$, this choice of $\eta_k$ satisfies (\ref{eq:zaitsev condition 1}) for a given $a$ with probability going to 1. The other condition, (\ref{eq:zaitsev condition 2}) can be easily verified for such a choice of $\eta$-sequence using ideas similar to the proof of Proposition \ref{prop:Lgamma}. We skip the details of that derivation. 
\end{proof}

%\begin{eqnarray}\label{eq:Vj0}
%#V_{j0}(\bar{a}_{2k_0j})&=&2 E((\sum_{r=1}^{k_0j+k_0-2} %F_{k_0j+r})(F_{k_0j+k_0-1}(\bar{a}_{2k_0j+2k_0})-\Lambda_{2j}(\bar{a}_{2k_0j+2k_0}))) \\ %\nonumber
%&+&Var(F_{k_0j+k_0-1}(\bar{a}_{2k_0j+2k_0})-\Lambda_{2j}(\bar{a}_{2k_0j+2k_0})) \\ \nonumber
%&+&Var(F_{k_0j+k_0}(\bar{a}_{2k_0j+2k_0})-\Lambda_{0j+1}(\bar{a}_{2k_0j+2k_0})+\sum_{r=1}^{k_0j%+k_0-2} F_{k_0j+k_0+r}) ).
%\end{eqnarray}

%\textbf{Proof of Proposition \ref{prop:Vj0a positive}}
%\begin{proof}

%Define functional dependence measure $\delta_{j,p}^{a}$ on the conditional process %$E(\tilde{X}_i^2-E(\tilde{X}_i^2)|\vartheta=a)$. From the expression of $V_j^{0a}$ in %(\ref{eq:Vj0}), note that, for every $j$,

%$$2 E((\sum_{r=1}^{k_0j+k_0-2} %F_{k_0j+r})(F_{k_0j+k_0-1}(\bar{a}_{2k_0j+2k_0})-\Lambda_{2j}(\bar{a}_{2k_0j+2k_0}))) \leq %2m\Theta_{0,2}^{a}$$

%\noindent and

%$$2 E((F_{k_0j+k_0}(\bar{a}_{2k_0j+2k_0})-\Lambda_{0j+1}(\bar{a}_{2k_0j+2k_0}))\sum_{r=1}^{k_0j%+k_0-2} F_{k_0j+k_0+r}) \leq m\Theta_{0,2}^{a} $$

%Also, note that, 
%$$Var() \geq \lambda_*m $$

%\end{proof}

\section{Some Useful Results}
\label{sec:sul}
\begin{lemma}\label{lem:important inequality} 
Let $p< \gamma$. Assume (2.A). Then $\sup_i E \min \{ |X_i|^\gamma n^{-\gamma/p}, 1 \} = o(n^{-1}).$
\end{lemma}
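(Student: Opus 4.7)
The plan is a truncation-plus-uniform-integrability argument, with truncation at level $\alpha n^{1/p}$ for a free parameter $\alpha > 0$ which will be sent to zero \emph{after} passing $n \to \infty$. A preliminary observation is that (2.A) implies $M := \sup_i E|X_i|^p < \infty$: choose $u$ with $\sup_i E[|X_i|^p \mathbf{1}_{|X_i|>u}] \le 1$, so that $\sup_i E|X_i|^p \le u^p + 1$.

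Next I would fix $\alpha > 0$, write $m_i := \min(|X_i|^\gamma/n^{\gamma/p},1)$, and split
\[
E(m_i) = E\!\left[m_i \mathbf{1}_{|X_i| \le \alpha n^{1/p}}\right] + E\!\left[m_i \mathbf{1}_{|X_i| > \alpha n^{1/p}}\right].
\]
On the first event, since $\gamma > p$ gives $|X_i|^\gamma = |X_i|^p |X_i|^{\gamma-p} \le |X_i|^p (\alpha n^{1/p})^{\gamma-p}$,
\[
E\!\left[m_i \mathbf{1}_{|X_i|\le \alpha n^{1/p}}\right] \le \frac{\alpha^{\gamma-p}}{n}\,E|X_i|^p \le \frac{\alpha^{\gamma-p} M}{n}.
\]
On the second event use $m_i \le 1$ and a Chebyshev-type bound:
\[
E\!\left[m_i \mathbf{1}_{|X_i|> \alpha n^{1/p}}\right] \le P(|X_i| > \alpha n^{1/p}) \le \frac{E[|X_i|^p \mathbf{1}_{|X_i|>\alpha n^{1/p}}]}{\alpha^p n}.
\]

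Combining the two and taking $\sup_i$ gives
\[
n \sup_i E\min\!\left(\frac{|X_i|^\gamma}{n^{\gamma/p}},1\right) \le \alpha^{\gamma-p} M + \alpha^{-p}\, \sup_i E\!\left[|X_i|^p \mathbf{1}_{|X_i|>\alpha n^{1/p}}\right].
\]
For each fixed $\alpha > 0$, the second term tends to $0$ as $n \to \infty$ by (2.A). Hence $\limsup_{n\to\infty} n \sup_i E\min(|X_i|^\gamma/n^{\gamma/p},1) \le \alpha^{\gamma-p} M$ for every $\alpha > 0$, and since $\gamma > p$ I can send $\alpha \downarrow 0$ to conclude the desired $o(n^{-1})$ bound.

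I do not anticipate a real obstacle; this is the standard device for upgrading uniform integrability to quantitative small-tail estimates. The one design choice worth highlighting is that truncating at $\alpha n^{1/p}$ (rather than at a level independent of $n$) is precisely what produces the factor $1/n$ in both pieces, after which the free parameter $\alpha$ balances the polynomial and tail contributions.
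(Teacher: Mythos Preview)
Your proof is correct. The paper takes a different route: it performs a dyadic decomposition of the region $\{|X_i| < n^{1/p}\}$ into shells $\{2^{-1-k} n^{1/p} \le |X_i| < 2^{-k} n^{1/p}\}$ for $k=0,\ldots,k_n$ with $k_n \asymp (\log n)/\log 2$ chosen so that $2^{pk_n}=o(n)$ and $n=o(2^{\gamma k_n})$, bounds each shell probability by a truncated $p$th moment, and sums the resulting geometric series in $2^{k(p-\gamma)}$. Your single-threshold argument with a free parameter $\alpha$ sent to zero after $n\to\infty$ is more elementary---it replaces the dyadic sum and the tuning of $k_n$ by the standard two-step $\limsup$ device. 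The paper's version has the mild advantage of producing an explicit one-shot bound (potentially useful if one later needs a rate), but for the stated $o(n^{-1})$ conclusion your approach is cleaner and fully adequate.
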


\begin{proof}
Choose $k_n = \lfloor 2 (\log n) / ( (p+\gamma) \log 2) \rfloor$. Then $n = o(2^{\gamma k_n})$ and $2^{p k_n} = o(n)$. Let $Z = |X_i| n^{-1/p}$. The lemma follows from
\begin{eqnarray*}
 E (\min \{ Z^\gamma, 1 \})
 &\le& P(Z\ge 1) + \sum_{k=0}^{k_n} 2^{-k\gamma} P(2^{-1-k} \le Z < 2^{-k}) 
 + 2^{-\gamma(k_n+1)} \cr
 &\le& E(Z^p {\bf 1}_{Z\ge 1}) + \sum_{k=0}^{k_n} 2^{p(k+1)-k\gamma} E(Z^p {\bf 1}_{ Z \ge 2^{-1-k}})
 + 2^{-\gamma(k_n+1)} = o(n^{-1}),
\end{eqnarray*}
in view of the uniform integrability condition (2.A) and $n^{1/2}/2^{k_n} \to \infty$. 
\end{proof}

\begin{lemma}\label{lem:truncated deltajy}
The functional dependence measures defined on the truncated process $(X_i^{\oplus})$ and the $m$-dependent process $(\tilde{X}_i)$, satisfy $\tilde{\delta}_{j,\gamma} \leq \delta^{\oplus}_{j,\gamma} \leq 2 n^{1/p-1/\gamma} t_n^{1-p/\gamma} \delta_{j,p}^{p/\gamma}.$
\end{lemma}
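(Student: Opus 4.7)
The statement factors into two inequalities: $\tilde\delta_{j,\gamma}\le\delta^{\oplus}_{j,\gamma}$ and $\delta^{\oplus}_{j,\gamma}\le 2n^{1/p-1/\gamma}t_n^{1-p/\gamma}\delta_{j,p}^{p/\gamma}$. My plan is to handle them separately, with the first being a conditional-Jensen argument and the second an $L^\gamma/L^p$ interpolation using the boundedness of the truncation operator.

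For the first inequality, fix $i$ and $j$, and set $b=t_n n^{1/p}$. Writing $\mathcal G_i=\sigma(\epsilon_i,\ldots,\epsilon_{i-m})$ and $\mathcal G_{i,(i-j)}=\sigma(\epsilon_i,\ldots,\epsilon_{i-j+1},\epsilon'_{i-j},\epsilon_{i-j-1},\ldots,\epsilon_{i-m})$, I would observe that both $\tilde X_i$ and $\tilde X_{i,(i-j)}$ can be realized as conditional expectations with respect to the enlarged sigma-field $\mathcal H=\sigma(\epsilon_i,\ldots,\epsilon_{i-m},\epsilon'_{i-j})$, since conditioning on the extra independent coordinate is harmless. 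The common unconditional means $E\,T_b(X_i)=E\,T_b(X_{i,(i-j)})$ (by distributional equality) then cancel, giving
\begin{equation*}
\tilde X_i-\tilde X_{i,(i-j)}=E\bigl(T_b(X_i)-T_b(X_{i,(i-j)})\,\big|\,\mathcal H\bigr).
\end{equation*}
Conditional Jensen in $L^\gamma$ yields $\|\tilde X_i-\tilde X_{i,(i-j)}\|_\gamma\le\|T_b(X_i)-T_b(X_{i,(i-j)})\|_\gamma$; taking the supremum over $i$ gives $\tilde\delta_{j,\gamma}\le\delta^{\oplus}_{j,\gamma}$.

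For the main inequality, I would use the two elementary bounds on the coordinatewise truncation $T_b$: it is $1$-Lipschitz in Euclidean norm, so $|T_b(u)-T_b(v)|\le|u-v|$, and it is bounded, so $|T_b(u)-T_b(v)|\le 2b$ (up to a harmless dimensional factor absorbed in the constant). Since $\gamma>p$, I would interpolate by writing
\begin{equation*}
E\bigl|T_b(X_i)-T_b(X_{i,(i-j)})\bigr|^{\gamma}=E\Bigl(\bigl|T_b(X_i)-T_b(X_{i,(i-j)})\bigr|^{\gamma-p}\,\bigl|T_b(X_i)-T_b(X_{i,(i-j)})\bigr|^{p}\Bigr)\le(2b)^{\gamma-p}\,E|X_i-X_{i,(i-j)}|^{p},
\end{equation*}
so that $\|T_b(X_i)-T_b(X_{i,(i-j)})\|_\gamma\le(2b)^{1-p/\gamma}\|X_i-X_{i,(i-j)}\|_p^{p/\gamma}$. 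Substituting $b=t_n n^{1/p}$ gives $(2b)^{1-p/\gamma}=2^{1-p/\gamma}t_n^{1-p/\gamma}n^{1/p-1/\gamma}\le 2 t_n^{1-p/\gamma}n^{1/p-1/\gamma}$, and taking the supremum over $i$ produces the stated bound.

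There is essentially no obstacle here: the only subtle point is the bookkeeping in the first step, where one must realize both $\tilde X_i$ and its coupled version as conditional expectations on a common sigma-field before applying Jensen; once that is set up, everything collapses. The interpolation step is standard and the constant $2$ in the statement reflects bounding $2^{1-p/\gamma}\le 2$ (with any dimensional factor from the $\sqrt{d}$ in $|T_b(v)|\le b\sqrt d$ absorbed as a harmless constant).
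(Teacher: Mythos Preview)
Your argument is correct and matches the paper's proof essentially line for line: the paper derives the second inequality by writing $|T_b(X_i)-T_b(X_{i,(i-j)})|\le b\min(2,|X_i-X_{i,(i-j)}|/b)$ and then bounding $\min(2,y)^\gamma\le 2^{\gamma-p}y^p$, which is exactly your $(2b)^{\gamma-p}$--interpolation in slightly different notation, while the first inequality is just the contraction property $\tilde\delta_{j,\gamma}\le\delta^{\oplus}_{j,\gamma}$ that the paper records earlier without proof and you justify via conditional Jensen. Your sigma-field bookkeeping for the first step is more explicit than the paper's one-line citation, but the content is identical.
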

\begin{proof}

\noindent Since the truncation operator $T$ is Lipschitz continuous, 
\begin{eqnarray*}\label{eq:proof of lemma}
(\delta^{\oplus}_{j,\gamma})^{\gamma} &= &\sup_{i} E( | T_{t_nn^{1/p}}(X_i)- T_{t_nn^{1/p}}(X_{i,(i-j)}) |^{\gamma} ) \\ \nonumber
&=& n^{\gamma/p} t_n^{\gamma}\sup_{i} E\left(\left|\min \left(2, \left|\frac{X_i- X_{i,(i-j)}}{t_n n^{1/p}}\right| \right)\right|^{\gamma}\right) \leq 2^{\gamma} n^{\gamma/p-1}t_n^{\gamma-p}\delta_{j,p}^p.
\end{eqnarray*}
The first inequality $\tilde{\delta}_{j,\gamma} \leq \delta^{\oplus}_{j,\gamma}$ follows from (\ref{eq:fdm inequality}). 
\end{proof}

%Remark: We can talk about the $\gamma^{th}$ moment of the truncated process even if $\gamma>p$.

\begin{lemma}
\label{lem:rosenthal lemma}
\underline{Rosenthal Type Moment Bound} 
Recall (\ref{eq:tn part1}) and (\ref{eq:tn part3 dup}) for $t_n$. Assume (\ref{eq:first condition}), (\ref{eq:second condition}), (\ref{eq:third condition}) along with (\ref{eq:A condition}) on $A$ related to the restriction on $\Theta_{i,p}$ as mentioned in (\ref{eq:form of thetaip}). Moreover, assume $m=\lfloor n^{L}t_n^k \rfloor$ with $k$ satisfying $k<(\gamma/2-1)^{-1}(\gamma-p)$. Then, we have
\begin{eqnarray}\label{eq:Sm bound}
\max_t E(\max_{1 \leq l \leq m} |\tilde{R}_{t,l}|^{\gamma})=O(m^{\gamma/2}).
\end{eqnarray}

\end{lemma}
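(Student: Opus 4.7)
The plan is to establish a Rosenthal-type moment bound for the maximal partial sum of the truncated $m$-dependent process $(\tilde X_i)$ by combining (i) the boundedness imposed by truncation at level $t_n n^{1/p}$, (ii) the functional dependence measure bound of Lemma~\ref{lem:truncated deltajy}, and (iii) the prescribed polynomial growth $m = \lfloor n^{L}t_n^k\rfloor$ with $k<(\gamma-p)/(\gamma/2-1)$. Translation invariance in $t$ together with the uniform definition of $\delta_{j,\cdot}$ lets us set $t=0$ without loss of generality.

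First, I would pass from the maximum to a single partial sum. Since $(\tilde X_i)$ is $m$-dependent, one can split $\tilde R_{0,l}$ into an $O(1)$ bounded number of shifted block sums, and then apply a maximal inequality of Móricz/Serfling type (or, more directly, the Nagaev-type maximal inequality already used in Proposition~\ref{prop:vja}, e.g.\ Liu--Xiao--Wu 2013 or Wu--Wu 2016) to reduce (\ref{eq:Sm bound}) to showing $E(|\tilde S_m|^\gamma)=O(m^{\gamma/2})$. The maximal versions of those inequalities bound the tail of $\max_{l\le m}|\tilde R_{0,l}|$ by essentially the same two terms as the tail of $|\tilde S_m|$, so after integrating $P(\max_{l\le m}|\tilde R_{0,l}|\ge x)$ against $\gamma x^{\gamma-1}\,dx$ the task reduces to bounding a "Gaussian" contribution and a "large jump" contribution of the right order.

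The Gaussian term is controlled by the variance: $\|\tilde X_i\|_2 \le \delta^{\oplus}_{0,2}+\sum_j\tilde\delta_{j,2} \le 2\Theta_{0,2}$, hence the variance of $\tilde S_m$ is $O(m)$, and the contribution integrated against $x^{\gamma-1}$ yields an $O(m^{\gamma/2})$ piece. The remaining "jump" piece produced by a Nagaev-type inequality is of order
\[
m\,\sup_i\|\tilde X_i\|_\gamma^\gamma \;\lesssim\; m\,(t_n n^{1/p})^{\gamma-p}\,\sup_i\|X_i\|_p^p \;=\; O\!\bigl(m\, t_n^{\gamma-p}n^{\gamma/p-1}\bigr),
\]
using the truncation level and the uniform $p$th moment from (2.A). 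Here the choice of $m = \lfloor n^{L}t_n^k\rfloor$ is exactly what is needed: condition (\ref{eq:equation 2}) rewrites as $L(\gamma/2-1)=\gamma/r-1\ge \gamma/p-1$ (since $r\le p$ throughout), and the restriction $k(\gamma/2-1)<\gamma-p$ combined with $t_n\to 0$ gives $t_n^{k(\gamma/2-1)}\gtrsim t_n^{\gamma-p}$, so
\[
m\, t_n^{\gamma-p}n^{\gamma/p-1} \;\le\; c\, m^{\gamma/2},
\]
which is the desired order.

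The last ingredient needed to invoke the Nagaev-type inequality is control on the cumulative dependence measure $\tilde\Theta_{\cdot,\gamma}$ of the truncated $m$-dependent process. By Lemma~\ref{lem:truncated deltajy}, $\tilde\delta_{j,\gamma}\le 2 n^{1/p-1/\gamma}t_n^{1-p/\gamma}\delta_{j,p}^{p/\gamma}$, so summing and splitting dyadically exactly as in (\ref{eq:third condition simplify}) bounds $\sum_{j\ge m+1}\tilde\delta_{j,\gamma}$ by a term that is $o(1)$ under the conditions (\ref{eq:third condition}) and $A>\gamma/p$ on the decay rate, while the finite-range sum $\sum_{j\le m}\tilde\delta_{j,\gamma}$ contributes only to lower-order pieces. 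The main obstacle is checking that every error term produced by the Nagaev/maximal inequality (the mixed dependence--moment term, the exponential term, and the truncation residual) is dominated by $m^{\gamma/2}$ under exactly the hypotheses imposed in the statement; this is where the restriction $k<(\gamma-p)/(\gamma/2-1)$ and the lower bound (\ref{eq:A condition}) on $A$ do the real work, and it is analogous to, though slightly simpler than, the verification already carried out in (\ref{eq:1st term small})--(\ref{eq:nagaev difficult case}) during the proof of Proposition~\ref{prop:vja}.
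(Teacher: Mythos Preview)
Your sketch has the right ingredients but takes a detour, and in one place the detour does not quite close.

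The paper's proof is considerably more direct: it invokes the ready-made Rosenthal-type \emph{maximal} moment inequality of Liu, Xiao and Wu (2013) for $\|\max_{1\le l\le m}|\tilde R_{t,l}|\|_\gamma$, which already handles the maximum and produces four explicit pieces,
\[
I = m^{1/2}\Bigl(\sum_{j\le m}\tilde\delta_{j,2}+\sup_i\|X_i\|_2\Bigr),\quad
II = m^{1/2}\sum_{j>m}\tilde\delta_{j,\gamma},\quad
III = m^{1/\gamma}\sum_{j\ge 1} j^{1/2-1/\gamma}\tilde\delta_{j,\gamma},\quad
IV = m^{1/\gamma}\sup_i\|T_{t_nn^{1/p}}(X_i)\|_\gamma .
\]
Each is then shown to be $O(m^{1/2})$. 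Terms $I$, $II$, $IV$ match what you call the Gaussian, tail-dependence, and jump contributions, and your bounds for those are essentially the paper's. The delicate one is $III$: after Lemma~\ref{lem:truncated deltajy} and a dyadic split it becomes, up to constants, $m^{1/\gamma} n^{1/p-1/\gamma}t_n^{1-p/\gamma}\sum_{l}2^{l(3/2-1/\gamma-(\chi+1)p/\gamma)}l^{-Ap/\gamma}$, and one needs precisely $3\gamma\ge 2(1+p+p\chi)$ together with $A>\gamma/p$ (from (\ref{eq:A condition})) to make this $O(m^{1/2})$. Your sentence ``the finite-range sum $\sum_{j\le m}\tilde\delta_{j,\gamma}$ contributes only to lower-order pieces'' is exactly the place where this work is hidden, and it is \emph{not} lower order without those two facts.

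The genuine gap in your route is the ``Nagaev tail $+$ integrate against $\gamma x^{\gamma-1}$'' step. A Nagaev bound at the same exponent $\gamma$ gives a polynomial tail $\asymp C/x^{\gamma}$, and $\int^{\infty} x^{\gamma-1}\cdot x^{-\gamma}\,dx$ diverges; to make the integration converge you would have to run Nagaev at some $\gamma'>\gamma$ (which truncation does allow), but once you unfold that you have essentially re-derived the Liu--Xiao--Wu Rosenthal inequality, including the weighted term $III$. Also, your reduction ``split $\tilde R_{0,l}$ into $O(1)$ shifted block sums because of $m$-dependence'' does not help here: the range is only $1\le l\le m$, so there is nothing to block. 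The cleanest fix is simply to cite the maximal Rosenthal inequality directly and then bound $I$--$IV$ as above.
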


\begin{proof}
\noindent Since the functional dependence measure is defined in an uniform manner, we can ignore the $\max_t$ in (\ref{eq:Sm bound}) and use the Rosenthal-type inequality for stationary processes in Liu, Xiao and Wu (2013, \cite{MR3114713}). By \cite{MR3114713}, there is a constant $c$, depending only on $\gamma$, such that
\begin{eqnarray*}\label{eq:rosenthal ineq}
\|\displaystyle\max_{1\leq l\leq m}|\tilde{R}_{t,l}| \|_{\gamma} & \leq & c m^{1/2} [\displaystyle\sum_{j=1}^{m} \tilde{\delta}_{j,2}+ \displaystyle\sum_{j=1+m}^{\infty} \tilde{\delta}_{j,\gamma}+ \sup_{i}\| T_{t_nn^{1/p}}(X_i) \| ] \\ \nonumber
& \hspace{0.05 in} &+ c m^{1/\gamma} [\displaystyle\sum_{j=1}^{m} j^{1/2-1/\gamma}\tilde{\delta}_{j,\gamma}+ \sup_{i}\| T_{t_nn^{1/p}}(X_i) \|_{\gamma} ] \\ \nonumber
& \leq & c(I+II+III+IV),
\end{eqnarray*}
where 
\begin{eqnarray*}\label{eq:three terms}
I&=& m^{1/2}\displaystyle\sum_{j=1}^{m}\tilde{\delta}_{j,2}+m^{1/2}\| X_1\|_2, \\ \nonumber
II&=& m^{1/2}\sum_{j=m+1}^{\infty}\tilde{\delta}_{j,\gamma}, \quad III=m^{1/\gamma} \displaystyle\sum_{j=1}^{\infty} j^{1/2-1/\gamma} \tilde{\delta}_{j,\gamma}, \\ \nonumber
IV&=& m^{1/\gamma} \sup_{i} \| T_{t_nn^{1/p}}(X_i) \|_{\gamma}.
\end{eqnarray*}

\noindent For the first term $I$, since $\sum_{j=1}^{\infty}\delta_{j,2}+\sup_{i} \|X_i \|_2 \leq 2 \Theta_{0,2}$ and $\tilde{\delta}_{j,2} \leq \delta_{j,2} $, we have $ I = O(m^{1/2}).$ Starting with $II$, we apply Lemma \ref{lem:truncated deltajy} to obtain 
\begin{eqnarray*}\label{eq:Simplify II} 
II=m^{1/2}\sum_{j=m+1}^{\infty}\tilde{\delta}_{j,\gamma}
\lesssim m^{1/2} n^{1/p-1/\gamma}t_n^{1-p/\gamma}  \sum_{j=m+1}^{\infty}\delta_{j,p}^{p/\gamma}.
\end{eqnarray*}
The rest follows from the derivation in (\ref{eq:third condition simplify}) and (\ref{eq:equation 3}). For the third term, we have
\begin{eqnarray}\label{eq:third term simplify}
III &\lesssim &   m^{1/\gamma}n^{1/p-1/\gamma}t_n^{1-p/\gamma}\sum_{j=1}^{m} j^{1/2-1/\gamma}\delta_{j,p}^{p/\gamma}  \\ \nonumber
 & \leq& m^{1/\gamma}n^{1/p-1/\gamma}t_n^{1-p/\gamma} \sum_{l=1}^{\lfloor{ \log_2m}\rfloor+1} \sum_{j=2^{l-1}}^{2^l-1} j^{1/2-1/\gamma} \delta_{j,p}^{p/\gamma} \\ \nonumber
 & \leq & m^{1/\gamma} n^{1/p-1/\gamma}t_n^{1-p/\gamma} \sum_{l=1}^{\lfloor{\log_2 m}\rfloor+1} 2^ {l (3/2-1/\gamma- p/\gamma)} O(2^{-l\chi p/\gamma} l ^{-Ap/\gamma}).
\end{eqnarray}
\noindent Recall the definition of $\chi_0$ from (\ref{eq:chi0}). If $\chi\leq \chi_0$, then our solution for $\gamma$ satisfies 
$$3/2-1/\gamma- (\chi+1)p/\gamma \geq 0,$$ \noindent with equality holding only for $\chi=\chi_0$. Hence, if $\chi<\chi_0$, we have 
\begin{eqnarray*}\label{eq:III simplify part 1}
m^{-1/2}III = m^{1- (\chi+1)p/\gamma} n^{1/p-1/\gamma}t_n^{1-p/\gamma} (\log n)^{-Ap/\gamma} O(1) =o(1),
\end{eqnarray*}
\noindent from (\ref{eq:equation 3}), (\ref{eq:define m dup}) and (\ref{eq:tn part3 dup}). If $\chi = \chi_0$, since $A>\gamma/p$ from (\ref{eq:A condition}) [The lower bound for $A$ there is just $2\gamma/p$ as mentioned in (\ref{eq:equation 1})], we have
\begin{eqnarray}\label{eq:III simplify part 2}
m^{-1/2}III= m^{1/\gamma-1/2} n^{1/p-1/\gamma}t_n^{1-p/\gamma} O(1) = o(1),
\end{eqnarray}
\noindent since (\ref{eq:equation 2}) is true. Also for the case of $\chi>\chi_0$ in the proof of Theorem \ref{th:theorem 2}, the way we define our three conditions in (\ref{eq:new set})
the new solution also satisfy $\gamma'=2(1+p+p\chi)/3$ and thus (\ref{eq:III simplify part 2}) holds. For the fourth term $IV$, we use (\ref{eq:tn part1}) to derive 
\begin{eqnarray}\label{eq:third term}
m^{-\gamma/2} IV^{\gamma} &=&m^{1-\gamma/2} \sup_{i} \|T_{t_nn^{1/p}}(X_i) \|^{\gamma}  \\
&\leq& m^{1-\gamma/2}t_n^{\gamma}n^{\gamma/p}   \sup_{i} E \left(\min \{ \frac{|X_i|^\gamma}{t_n^{\gamma}n^{\gamma/p}}, 1\}\right) \nonumber \\
&=&m^{1-\gamma/2}t_n^{\gamma}n^{\gamma/p-1}o(1) = o(1), \nonumber
\end{eqnarray}
in the light of (\ref{eq:equation 2}). 
\end{proof}

%Remark: Note that, while dealing with the third term $III$, if $r$ is strictly less than $p$ then we only need 

%\begin{eqnarray}\label{eq:r<p}
%n^{1-\gamma/p} \sup_{i \geq 1} \| min(n^{1/p},|X_i|) \|^{\gamma}= O(1)
%\end{eqnarray}

%If we assume only $\sup_{i \geq 1}\|X_i \|_p<\infty$, we have using Lemma 4.2 in Berkes, Liu, Wu (2014, \cite{MR3178474}) 
%\begin{eqnarray}
%\sup_j \displaystyle\sum _{i=1}^{\infty} 3^i E \min %(|X_j/3^{i/p}|^{\gamma},|X_j/3^{i/p}|^2) \leq \sup_j cE (|X_j|^p).
%\end{eqnarray}

%This directly implies (\ref{eq:r<p}).

\end{document}